\newtheorem{thm}{Theorem}[section]
\newtheorem{lem}[thm]{Lemma}
\newtheorem{rem}[thm]{Remark}
\newtheorem{acknowledgement}{Acknowledgement}
\newcommand{\norm}[1]{\left\Vert#1\right\Vert}
\newcommand{\abs}[1]{\left\vert#1\right\vert}
\newcommand{\Real}{\mathbb R}
\newcommand{\pfrac}[2]{\frac{\partial #1}{\partial #2}}
\begin{document}
\title{On the  Sacks-Uhlenbeck flow of Riemannian surfaces}

\numberwithin{equation}{section}
\author{Min-Chun Hong and Hao Yin}

\address{Min-Chun Hong, Department of Mathematics, The University of Queensland\\
Brisbane, QLD 4072, Australia}  \email{hong@maths.uq.edu.au}

\address{Hao Yin, Department of Mathematics, The University of Queensland\\
Brisbane, QLD 4072, Australia and Department of Mathematics, Shanghai Jiaotong University, Shanghai, China}
\email{haoyin@sjtu.edu.cn}

\begin{abstract}
In this paper, we study an $\alpha$-flow for the Sacks-Uhlenbeck
functional on Riemannian surfaces and prove that the limiting map
by the $\alpha$-flows is a weak solution to the harmonic map flow.
By an application of the $\alpha$-flow, we present a simple proof
of an energy identity of a minimizing sequence in each homotopy
class.
\end{abstract}
\subjclass{AMS 58E20, 35K45} \keywords{harmonic map flow,
Sacks-Uhlenbeck functional}

 \maketitle

\pagestyle{myheadings} \markright {} \markleft { }

\section{Introduction}

Suppose that $M$ is a Riemannian manifold and $N$ is a closed
manifold embedded in $\Real^k$. A critical point $u$ of the
Dirichlet energy
\begin{equation*}
    E(u)=\int_M \abs{\nabla u}^2 dv
\end{equation*}
is called a harmonic map.

Harmonic maps  between Riemannian manifolds can be thought of as a
natural generalization of geodesics,   minimal surfaces  and
harmonic functions. A fundamental question is: Given a smooth map
$u_0$ from $M$ to $N$, does there exist  a  smooth harmonic map
representative in the homotopy class of $u_0$? In a pioneering
work \cite{ES}, Eells and Sampson introduced the harmonic map flow
and used it to deform an initial map to a harmonic map in the same
homotopic class if the sectional curvature of $N$ is non-positive.
In general, one cannot expect to have the existence of smooth
harmonic maps into general target manifolds due to the fact that
singularities do occur.  From now on, we assume that $M$ is a
closed Riemannian surface, hence $E(u)$ is conformally invariant.
Under certain topological conditions of $N$, the existence of
minimizing harmonic maps in a homotopy class was proved by Lemaire
\cite{Lemaire} and Schoen-Yau \cite{SY}. In a well-known paper
\cite{SU}, Sacks and Uhlenbeck proposed a family of the perturbed
functional
\begin{equation*}
    E_\alpha(u)=\int_M (1+\abs{\nabla u}^2)^\alpha dv
\end{equation*}
for $\alpha>1$. The advantage of the perturbed functional is that
$E_\alpha$ satisfies the Palais-Smale condition and therefore it
is easy to obtain critical points of $E_\alpha$ by either
minimizing energy functional or Morse theory. When $\alpha\to 1$,
the limiting map of critical points of $E_\alpha$ is a harmonic
map and a bubbling phenomenon occurs.

 On the other hand, Struwe
\cite{struwe} proved the global existence of the weak solution  to
the harmonic map flow and that the solution to the flow converges
to a harmonic map as $t\to\infty$. Chang, Ding and Ye \cite{CDY}
constructed an example that the harmonic map flow blowups at
finite time, so in general the limit harmonic map by the flow  may
not be in the same homotopy class of the initial map.

In this paper, we study an $\alpha$-flow for the perturbed energy
$E_\alpha$ in the same homotopy class of the initial map. More
precisely, we consider the following evolution problem:
\begin{equation}\label{eqn:flow1}
    \partial_t u=\triangle_M u+(\alpha-1)\frac{\nabla \abs{\nabla u}^2 \cdot \nabla u}{1+\abs{\nabla u}^2} +A(u)(\nabla u,\nabla u)
\end{equation}
 with $u(x,0)=u_0$, where $\triangle_M$ is the Laplacian operator with respect to the Riemannian metric of $M$ and $A$ is the second fundamental form of $N$.
 We call (1.1) the Sacks-Uhlenbeck flow (or $\alpha$-flow). We would like to point out that
 the $\alpha$-flow is not the standard gradient flow, but the flow has some
analytic advantage.

 At first, we have

\begin{thm}\label{thm:global}
For a given smooth map $u_0:M\to N$, there exists a unique global
smooth solution $u_{\alpha }(x,t)$ to the evolution problem
(\ref{eqn:flow1}) in $M\times [0,\infty )$. Moreover, for any
$t_i\to \infty$, $u_{\alpha}(\cdot,t_i)$ converges smoothly to a
limit map $u_\alpha$, which is a critical point of the
Sacks-Uhlenbeck functional for $\alpha-1$ sufficiently small.
\end{thm}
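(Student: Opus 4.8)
The plan is to treat \eqref{eqn:flow1} as a quasilinear parabolic system, extract a hidden \emph{weighted} gradient-flow structure that forces the $\alpha$-energy to decay, and then combine an $\eps$-regularity estimate with the subcriticality of $E_\alpha$ for $\alpha>1$ to rule out energy concentration. For short-time existence, note that, writing $\abs{\nabla u}^2=g^{ij}\partial_iu\cdot\partial_ju$, the term $(\alpha-1)\frac{\nabla\abs{\nabla u}^2\cdot\nabla u}{1+\abs{\nabla u}^2}$ is genuinely of second order, so the principal part of \eqref{eqn:flow1} is $\triangle_Mu+\frac{2(\alpha-1)}{1+\abs{\nabla u}^2}\nabla^2u(\nabla u,\nabla u)$, with symbol in a codirection $\xi$ equal to $\abs{\xi}_g^2 I+\frac{2(\alpha-1)}{1+\abs{\nabla u}^2}p\,p^{T}$, where $p=g^{kl}\xi_k\partial_lu\in\Real^k$. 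Since $\abs{p}^2\le\abs{\xi}_g^2\abs{\nabla u}^2$, the eigenvalues lie in $[\,\abs{\xi}_g^2,\ (1+2(\alpha-1))\abs{\xi}_g^2\,]$, so the system is uniformly parabolic for every $\alpha\ge1$ with ellipticity constants depending only on $\alpha$ and the metric of $M$, not on $\abs{\nabla u}$. Standard theory for quasilinear parabolic systems then gives a unique smooth solution on a maximal interval $[0,T_{\max})$, uniqueness being a routine energy estimate for the difference of two solutions; that $u(\cdot,t)$ stays on $N$ follows because the right-hand side of \eqref{eqn:flow1} is a linear combination of the tangent vectors $\partial_ju$, so $\mathrm{dist}^2(u,N)$ satisfies a scalar parabolic inequality with zero initial data.

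The structural heart of the argument is the identity $(1+\abs{\nabla u}^2)^{\alpha-1}\partial_tu=\tau_\alpha(u)$, where $\tau_\alpha(u)=(1+\abs{\nabla u}^2)^{\alpha-1}\tau(u)+(\alpha-1)(1+\abs{\nabla u}^2)^{\alpha-2}\nabla\abs{\nabla u}^2\cdot\nabla u$ is the $\alpha$-tension field (here $\tau(u)=\triangle_Mu+A(u)(\nabla u,\nabla u)$), i.e.\ the negative $L^2$-gradient of $\tfrac1{2\alpha}E_\alpha$; thus \eqref{eqn:flow1} is a weighted gradient flow of $E_\alpha$. Pairing the Euler--Lagrange expression against $\partial_tu$ and integrating by parts (the normal part of $\triangle_Mu$ drops against the tangent vector $\partial_tu$) gives
\begin{equation*}
\frac{d}{dt}E_\alpha(u(\cdot,t))=-2\alpha\int_M(1+\abs{\nabla u}^2)^{\alpha-1}\,\abs{\partial_tu}^2\,dv\le0 .
\end{equation*}
Hence $E_\alpha(u(\cdot,t))\le E_\alpha(u_0)$ for all $t$ and $\int_0^{T_{\max}}\!\!\int_M(1+\abs{\nabla u}^2)^{\alpha-1}\abs{\partial_tu}^2\,dv\,dt\le\tfrac1{2\alpha}E_\alpha(u_0)$. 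In particular $\sup_t\norm{\nabla u(\cdot,t)}_{L^{2\alpha}(M)}<\infty$, and since $\alpha>1$ Hölder's inequality yields $\int_{B_r(x)}\abs{\nabla u(\cdot,t)}^2\le Cr^{\,2-2/\alpha}E_\alpha(u_0)^{1/\alpha}$, which tends to $0$ as $r\to0$ \emph{uniformly in $t$}.

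To prove $T_{\max}=\infty$ one adapts Struwe's $\eps$-regularity lemma for the harmonic map flow: there is $\eps_0=\eps_0(\alpha,M,N)>0$ such that if $\sup_{s\in[t-4\delta^2,t]}\int_{B_{2\delta}(x)}\abs{\nabla u(\cdot,s)}^2<\eps_0$, then $\nabla u$ — and, by parabolic Schauder, all higher derivatives — are bounded on $B_\delta(x)\times[t-\delta^2,t]$; the proof is the same Moser iteration, the only new point being that the extra second-order term is absorbed into the uniformly parabolic principal part. Combining the local energy inequality with the two facts above, the quantity $\sup_{s\in[t-4\delta^2,t]}\int_{B_{2\delta}(x)}\abs{\nabla u(\cdot,s)}^2$ is bounded, up to lower-order terms, by $C\delta^{\,2-2/\alpha}E_\alpha(u_0)^{1/\alpha}+C\int_{t-4\delta^2}^{t}\!\!\int_M\abs{\partial_tu}^2\,dv\,ds$, where the first term is made small by choosing $\delta$ small and the second is the tail of a convergent integral, hence small for $t$ near $T_{\max}$ (if $T_{\max}<\infty$) — or for all $t$ sufficiently large. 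Thus the $\eps_0$-hypothesis holds on every ball, giving uniform-in-space, and eventually uniform-in-time, $C^k(M)$ bounds on $u(\cdot,t)$; in particular $T_{\max}=\infty$ and $\{u(\cdot,t)\}_{t\ge T_0}$ is precompact in every $C^k(M)$.

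Finally, since $\int_0^\infty\norm{\partial_tu(\cdot,t)}_{L^2(M)}^2\,dt<\infty$, there exist $t_i\to\infty$ with $\norm{\partial_tu(\cdot,t_i)}_{L^2(M)}\to0$, and by the uniform $C^k$ bounds a subsequence converges in $C^\infty(M)$ to a map $u_\alpha$ with $\tau_\alpha(u_\alpha)=0$, i.e.\ a critical point of $E_\alpha$. Upgrading this to convergence along \emph{every} sequence $t_i\to\infty$ — so that $u_\alpha$ is intrinsically defined — is carried out, for $\alpha-1$ sufficiently small, by the {\L}ojasiewicz--Simon gradient inequality for the real-analytic functional $E_\alpha$ near $u_\alpha$, together with the energy decay; the compactness this argument requires is precisely the uniform $C^k$ bound obtained above. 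I expect the main obstacle to be the no-blow-up step: verifying that the Sacks--Uhlenbeck perturbation genuinely forbids concentration — that bounded $E_\alpha$-energy forces the $\eps_0$-smallness hypothesis on small balls — and checking that Struwe's iteration survives the new quasilinear term; by contrast the short-time and {\L}ojasiewicz--Simon steps, though technical, are standard once the a priori estimates are in hand.
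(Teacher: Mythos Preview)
Your approach is essentially the paper's: both hinge on the weighted gradient-flow identity giving $E_\alpha$ decay, then on the subcriticality observation that $\int_{B_r}\abs{\nabla u}^2\le Cr^{2-2/\alpha}E_\alpha(u_0)^{1/\alpha}$ forces the small-energy hypothesis \emph{uniformly in $t$}, after which a Bochner inequality plus Moser iteration yield the $L^\infty$ gradient bound and parabolic bootstrapping finishes. Two differences are worth flagging. First, the paper does not simply ``absorb the extra second-order term into the uniformly parabolic principal part'': it derives an explicit scalar Bochner inequality for $e(u)$ (Lemma~\ref{lem:bochner}) and then runs $W^{2,p}$/$W^{3,p}$ estimates on the system, and in \emph{both} steps it must take $\alpha-1$ small in order to absorb terms of size $(\alpha-1)\norm{\nabla^2 u}$ appearing on the right-hand side --- your symbol calculation, while correct, does not by itself produce the scalar differential inequality needed for Moser, and you should expect the same smallness restriction rather than a result valid for all $\alpha>1$. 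Second, for the clause ``for any $t_i\to\infty$ the sequence converges'' the paper in fact gives no argument beyond the uniform $C^k$ bounds (which yield only subsequential limits); your {\L}ojasiewicz--Simon proposal is a legitimate way to close that gap, but it is strictly more than what the paper supplies.
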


  For each $\alpha >1$, let $u_\alpha(x,t)$  be the global
smooth solution to (1.1)   in the same homotopy class with the
 initial map $u_0$ from $M$ to $N$. There is a nature problem
to study the limit behaviour of the solution $u_\alpha (x,t)$ as
$\alpha\to 1$ as one in \cite {SU}.
 Thus,
we prove
\begin{thm}
    \label{thm:routine}

    (i) As $\alpha \to 1$, the solution $u_\alpha$ converges smoothly to $u$ on $M\times [0,\infty)\setminus \Sigma$, where
     the concentration set $\Sigma$  is a closed set defined by
\begin{equation*}
    \Sigma=\bigcap_{0<R<R_M} \left\{z\in M\times [0,\infty)|\quad \liminf_{\alpha\to 1} \Psi^\alpha_R(u_\alpha,z) \geq \varepsilon_0 \right\}
\end{equation*}
for some $\varepsilon_0$ to be determined. For the precise
definition of $\Psi_R^\alpha$, see Section \ref{sec:approx}.

(ii) For any two positive $t_1$ and $t_2$,   $\mathcal P^2 (\Sigma\cap (M\times
[t_1,t_2]))$ is finite, where $\mathcal P^2$ denotes the
$2$-dimensional parabolic Hausdorff measure. Moreover, for any
$t\in (0,+\infty)$, $\Sigma_t=\Sigma\cap (M\times \{t\})$
     consists of  at most finitely many points.

     (iii)  $u$ is a weak solution to the harmonic map flow.
\end{thm}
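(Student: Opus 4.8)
The plan is to prove Theorem \ref{thm:routine} by combining the $\eps$-regularity machinery available for the $\alpha$-flow with a Struwe-type blow-up analysis, letting $\alpha\to 1$ rather than $t\to\infty$. First I would establish a uniform (in $\alpha$) local energy estimate: by Theorem \ref{thm:global} each $u_\alpha(\cdot,t)$ is smooth, and since the $\alpha$-flow is designed so that $E_\alpha(u_\alpha(\cdot,t))$ is non-increasing in $t$ (this is the ``analytic advantage'' alluded to in the introduction), one gets $\sup_{t\ge 0}E_\alpha(u_\alpha(\cdot,t))\le E_\alpha(u_0)\le C$ for $\alpha$ close to $1$, hence a uniform Dirichlet-energy bound $\sup_t E(u_\alpha(\cdot,t))\le C$. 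Combined with the evolution equation this yields a uniform bound on $\int_0^T\int_M |\partial_t u_\alpha|^2$, which is the standard input for a local monotonicity-type quantity $\Psi_R^\alpha$.

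Next I would prove the key $\eps$-regularity statement: there is $\eps_0>0$ (the one in the definition of $\Sigma$) and $R_0>0$ such that if $\Psi_R^\alpha(u_\alpha,z_0)<\eps_0$ for some $R<R_0$ then $u_\alpha$ is bounded in $C^\infty$ on a smaller parabolic neighbourhood of $z_0$, uniformly in $\alpha$ near $1$. This is where the extra term $(\alpha-1)\nabla|\nabla u|^2\cdot\nabla u/(1+|\nabla u|^2)$ must be controlled; one treats it as a lower-order perturbation since $\alpha-1\to 0$, and the smallness of $\Psi_R^\alpha$ plus parabolic $L^p$-theory and bootstrapping gives the estimates, following the scheme of Struwe's $\eps$-regularity for the harmonic map flow adapted to the $E_\alpha$ setting. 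Granting this, part (i) is immediate: off $\Sigma$, every point has a neighbourhood where $\liminf_{\alpha\to 1}\Psi_R^\alpha<\eps_0$, so along any sequence $\alpha_j\to 1$ the maps $u_{\alpha_j}$ are locally $C^\infty$-bounded, and by diagonalisation and the uniqueness of the harmonic map flow (Struwe) the limit $u$ is well defined and smooth on $(M\times[0,\infty))\setminus\Sigma$; that $\Sigma$ is closed follows from upper semicontinuity in $z$ of $\liminf_{\alpha\to1}\Psi_R^\alpha$.

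For part (ii) I would use the uniform energy bound together with a covering argument: on each slab $M\times[t_1,t_2]$ with $t_1>0$, finitely many parabolic balls of radius $R$ whose associated $\Psi_R^\alpha\ge\eps_0$ can be packed before exhausting the total energy $C/\eps_0$, which bounds $\mathcal P^2(\Sigma\cap(M\times[t_1,t_2]))$; refining this to a fixed time slice, and using that $E(u_\alpha(\cdot,t))\le C$ for \emph{every} $t>0$ (so one works with the elliptic, not parabolic, $\eps$-regularity at a fixed time, essentially the Sacks--Uhlenbeck small-energy regularity), shows $\Sigma_t$ is finite, with cardinality at most $C/\eps_0$. Part (iii) then follows by passing to the limit in the weak formulation: for a test vector field tangent to $N$, multiply equation (\ref{eqn:flow1}) and integrate over $(M\times[0,\infty))\setminus\Sigma$; the perturbation term carries the factor $\alpha-1$ and is killed by the uniform energy bound, while on compact subsets away from $\Sigma$ the smooth convergence lets one pass to the limit directly, and the finiteness/measure-zero of $\Sigma_t$ (and of $\Sigma$ in spacetime) shows the excised set contributes nothing, so $u$ solves the harmonic map flow weakly and inherits the energy inequality.

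The main obstacle I expect is the $\alpha$-uniform $\eps$-regularity in the second paragraph: one must show that the constants $\eps_0$ and $R_0$, and the resulting higher-order estimates, do not degenerate as $\alpha\to 1$, which requires carefully tracking the $(\alpha-1)$-weighted term through the parabolic bootstrap and checking that the natural scaling of $\Psi_R^\alpha$ is compatible with the nonlinearity $(1+|\nabla u|^2)^\alpha$ rather than $|\nabla u|^2$; a secondary technical point is ruling out energy concentration on the initial slice $t=0$, which is why part (ii) is stated only for $t_1>0$.
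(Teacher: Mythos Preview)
Your overall strategy---uniform energy bounds, $\varepsilon$-regularity away from $\Sigma$, a covering argument for (ii), and passing to the limit for (iii)---matches the paper. But the proposal underspecifies the two technical pillars on which everything rests, and one of your shortcuts is not justified.

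First, the $\varepsilon$-regularity step requires more than ``parabolic $L^p$-theory and bootstrapping.'' The paper proves a parabolic \emph{monotonicity formula} for $\Psi_\rho^\alpha$ (Lemma~\ref{lem:mono}): for $0<r<\rho$, $\Psi_r^\alpha(u,z)\le e^{c(\rho-r)}\Psi_\rho^\alpha(u,z)+cE_0(\rho-r)$. This is what transfers smallness of $\Psi_R^\alpha$ at one scale to control of the normalized energy $\sigma^{2\alpha-4}\int_{P_\sigma}e_\alpha(u_\alpha)$ at \emph{every} smaller scale, and it is also what drives the covering argument in (ii): one relates $\Psi^\alpha_{\delta R}\ge\varepsilon_0/2$ at points of $\Sigma$ to a lower bound on $\int_{Q_R}e_\alpha$, with the Gaussian tail outside $Q_R$ controlled via monotonicity up to scale $\sim\sqrt{t_1}$. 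Second, the $C^0$ gradient bound is not obtained by bootstrapping the equation directly; the paper establishes a \emph{Bochner-type inequality} for $e_\alpha(u)=(r^2+|\nabla u|^2)^\alpha$ under the scaled flow (Lemma~\ref{lem:bochner1}), then runs a Schoen--Struwe point-picking blow-up followed by Moser iteration on this subequation. Without the Bochner inequality you cannot start an $L^p$ bootstrap on (\ref{eqn:flow1}) from mere smallness of $\Psi_R^\alpha$, since the quadratic nonlinearity $A(u)(\nabla u,\nabla u)$ is not yet under control. You identified correctly that uniformity of constants as $\alpha\to1$ is the crux; the actual work is precisely in proving these two lemmas with $\alpha$-independent constants.

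One genuine error: you invoke ``uniqueness of the harmonic map flow (Struwe)'' to conclude the limit $u$ is well defined. Struwe's uniqueness holds only among solutions satisfying the energy inequality, and in any case only up to the first singular time; indeed Theorem~\ref{thm:compare} is devoted precisely to comparing $u$ with Struwe's solution, and whether they coincide after $T$ is left open. The paper simply passes to a subsequence $\alpha_k\to1$ and makes no uniqueness claim for $u$. Finally, for the finiteness of $\Sigma_t$ the paper repeats the same parabolic covering argument rather than switching to elliptic Sacks--Uhlenbeck regularity; since $\Sigma$ is defined through the parabolic quantity $\Psi_R^\alpha$, an elliptic argument at a fixed $t$ does not by itself control membership in $\Sigma$.
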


In fact,  this result is similar to the one by the Ginzburg-Landau
flow  approximation.
 In \cite{CS}, Chen and Struwe used the Ginzburg-Landau flow  approximation  to construct a global weak solution of the harmonic map
 flow for any dimension larger than two.
Further results obtained by  the Ginzburg-Landau flow were
discussed by Lin and Wang in \cite{LW}. The method by the
Ginzburg-Landau flow is very powerful to show the global existence
and partial regularity of a weak solution to the harmonic map
flow, but it seems that the flow loses control on the topological
quantity of maps. The advantage of the Sacks-Uhlenbeck flow
approximation is
 that the solution $u_\alpha$ remains in the same homotopy class of $u_0$, therefore it seems that the Sacks-Uhlenbeck
flow provides a nice geometric picture and can be used to have
some  geometric applications (see Theorem \ref{thm:lin} below).

Without the assumption of the energy inequality,
 weak solutions to the heat flow for harmonic maps may  not always
 be
unique (see \cite{unique1} and \cite{unique2}), so it is very
interesting whether the limiting solution $u$ by the
Sacks-Uhlenbeck flow is the global weak solution $\hat{u}$
constructed by   Struwe in \cite{struwe}.  Although we do not have
a complete answer to the question,  we can compare some property
of $u$ with the one of $\hat{u}$ in the following:

\begin{thm}\label{thm:compare}
(i) The first concentration time for $u_\alpha$ as $\alpha\to 1$;
i.e.
    \begin{equation*}
        T=\inf_{(x,t)\in \Sigma} t
    \end{equation*}
    is the same as the first singular time of the Struwe's weak solution. Hence, $u_\alpha$ converges smoothly to the Struwe's solution in $M\times [0, T)$.

    (ii) Consider the following two limits of measures
    \begin{equation*}
       |\nabla \hat{u}(\cdot,t)|^2 dv \to |\nabla \hat{u}(\cdot, T)|^2 dv+\sum_{i=1}^{\hat{k}} \hat{m}_i \delta_{\hat{p}_i} \quad \mbox{as } t\to T
    \end{equation*}
    and
    \begin{equation*}
      (1+|\nabla u_\alpha(T,\alpha)|^2)^{\alpha} dv \to (1+|\nabla u(\cdot,T)|^2)\,dv +\sum_{i=1}^k m_i \delta_{p_i} \quad \mbox{as } \alpha\to 1.
    \end{equation*}
    Then $k=\hat{k}$, $p_i=\hat{p}_i$ and $\hat{m}_i\geq m_i$.
\end{thm}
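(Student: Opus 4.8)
For part (i), the plan is to use uniqueness of smooth solutions of the harmonic map flow. By Theorem \ref{thm:routine}, $u_\alpha\to u$ smoothly on $M\times[0,\infty)\setminus\Sigma$, and $\Sigma$ misses $M\times[0,T)$ by the definition of $T$; since the perturbation term $(\alpha-1)\,\nabla\abs{\nabla u_\alpha}^2\cdot\nabla u_\alpha/(1+\abs{\nabla u_\alpha}^2)$ tends to zero in $C^\infty_{\mathrm{loc}}$ along this convergence, $u$ solves $\partial_t u=\triangle_M u+A(u)(\nabla u,\nabla u)$ smoothly on $M\times[0,T)$ with $u(\cdot,0)=u_0$. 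By uniqueness of smooth solutions of the harmonic map flow, $u=\hat u$ there, so $u_\alpha\to\hat u$ smoothly on $M\times[0,T)$. To see $T=\hat T$ (the first singular time of $\hat u$) I argue by contradiction in two directions. If $T<\hat T$, fix $(x_0,T)\in\Sigma$; then $\hat u$ is smooth near $M\times\set{T}$, so the Dirichlet energy of $\hat u(\cdot,s)$ on a small ball $B_r(x_0)$ stays below the $\varepsilon$-regularity threshold for $s$ slightly below $T$. By the smooth convergence at a time $s_0<T$ and the local energy inequality for \eqref{eqn:flow1} (uniform in $\alpha$, the extra term being of good sign), the same smallness holds for $u_\alpha$ on $B_{r/2}(x_0)$ throughout $[s_0,T]$ once $s_0$ is close enough to $T$; the $\varepsilon$-regularity for \eqref{eqn:flow1} then gives uniform $C^\infty$ bounds on $u_\alpha$ near $(x_0,T)$, contradicting $\liminf_{\alpha\to1}\Psi^\alpha_R(u_\alpha,(x_0,T))\ge\varepsilon_0$. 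If $T>\hat T$, then $\Sigma$ misses a neighbourhood of $M\times\set{\hat T}$, so $u_\alpha\to u$ smoothly there and $u$ is a smooth solution of the harmonic map flow on $M\times[0,\hat T+\delta)$ agreeing with $\hat u$ on $[0,\hat T)$; this extends $\hat u$ smoothly past $\hat T$, a contradiction. Hence $T=\hat T$, and the smooth convergence on $M\times[0,T)$ is to Struwe's solution.

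For part (ii), the first step is to show $\set{p_1,\dots,p_k}=\set{\hat p_1,\dots,\hat p_{\hat k}}=\Sigma_T$, which yields $k=\hat k$ and, after relabelling, $p_i=\hat p_i$. Outside $\Sigma_T$ the slice $u_\alpha(\cdot,T)\to u(\cdot,T)$ smoothly by Theorem \ref{thm:routine} and $\hat u(\cdot,t)\to\hat u(\cdot,T)$ strongly, so neither limit measure puts mass outside $\Sigma_T$, giving $\set{p_i}\subseteq\Sigma_T$ and $\set{\hat p_i}\subseteq\Sigma_T$; conversely, at each $q\in\Sigma_T$ a nontrivial bubble (an $\alpha$-harmonic sphere, hence harmonic in the limit, by the Sacks--Uhlenbeck blow-up analysis underlying Theorem \ref{thm:routine}) forms in both the $\alpha\to1$ and the $t\nearrow T$ processes, so at least $\varepsilon_0$ of energy concentrates at $q$ in each and $q$ appears among the $p_i$ and among the $\hat p_i$.

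The mass inequality then rests on the monotonicity of $E_\alpha$ along \eqref{eqn:flow1}: differentiating and using \eqref{eqn:flow1}, with the normal term $A(u)(\nabla u,\nabla u)$ orthogonal to $\partial_t u$, gives
\[
\frac{d}{dt}E_\alpha(u_\alpha(\cdot,t))=-2\alpha\int_M(1+\abs{\nabla u_\alpha}^2)^{\alpha-1}\abs{\partial_t u_\alpha}^2\,dv\le 0 .
\]
Fix $i$ and a cutoff $\phi$ with $\phi\equiv1$ on $B_R(p_i)$, supported in $B_{2R}(p_i)$, with $R$ small enough that $B_{2R}(p_i)$ contains no other $p_j$. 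Inserting $\phi^2$ into the same computation produces, besides the nonpositive term, only a flux term supported where $\nabla\phi\ne0$; since $\overline{B_{2R}(p_i)\setminus B_R(p_i)}\times[0,T]$ is disjoint from $\Sigma$, the smooth convergence $u_\alpha\to u$ there bounds that flux by a constant $C_R$ independent of $\alpha$ and $t$, whence
\[
\int_M\phi^2(1+\abs{\nabla u_\alpha(\cdot,T)}^2)^\alpha\,dv\le\int_M\phi^2(1+\abs{\nabla u_\alpha(\cdot,t)}^2)^\alpha\,dv+C_R(T-t)\qquad(t<T).
\]
Letting $\alpha\to1$ (using the two measure convergences, the smooth convergence at the fixed time $t$, and $\phi(p_i)=1$), then $t\to T$, and using $u(\cdot,T)=\hat u(\cdot,T)$ — both being the weak $W^{1,2}$ limit of the common flow for $t<T$ — the smooth parts of the two sides cancel and one is left with $m_i\le\hat m_i$; summing over $i$ recovers $\sum_i m_i\le\sum_i\hat m_i$ as a consistency check.

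The routine bulk is the energy accounting in the last two displays. The real difficulty is the middle step: tying the parabolic quantity $\Psi_R^\alpha$ and the $\alpha$-flow bubbling analysis to Struwe's energy-concentration description of the singular set, so that the a priori unrelated finite sets $\set{p_i}$ and $\set{\hat p_i}$ are both forced to equal $\Sigma_T$. This relies on the uniform-in-$\alpha$ $\varepsilon$-regularity and local energy inequality for \eqref{eqn:flow1}, together with the energy-gap and no-neck statements from the blow-up analysis developed in the earlier sections.
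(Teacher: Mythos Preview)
Your argument for (i) and for the mass inequality $m_i\le\hat m_i$ in (ii) is essentially the paper's: the paper packages your $T<\hat T$ contradiction as Lemma~\ref{thm:right} (every rightmost point of $\Sigma$ lies in $\mathcal S(u)$), and the mass comparison is exactly the local energy inequality (\ref{eqn:local}) followed by $\alpha\to1$ and then $t\to T$.

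The gap is in your identification $\Sigma_T\subseteq\{\hat p_i\}$. You assert that at each $q\in\Sigma_T$ ``a nontrivial bubble forms in the $t\nearrow T$ process'', but this \emph{is} the statement that $q$ is a Struwe blow-up point; nothing you have established forces Struwe's flow to bubble at a point of $\Sigma_T$. The appeal to ``the Sacks--Uhlenbeck blow-up analysis underlying Theorem~\ref{thm:routine}'' and to ``energy-gap and no-neck statements'' does not help: Theorem~\ref{thm:routine} proves partial regularity of the $\alpha\to1$ limit, not bubble formation, and the no-neck analysis in the paper (Section~\ref{sec:identity}) concerns a different sequence and is developed only after this theorem. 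The paper closes this gap without any bubble analysis, by reusing the Lemma~\ref{thm:right} argument you already carried out in (i): if $x\notin\{\hat p_i\}$ then Struwe's theory gives $|\nabla\hat u|=|\nabla u|\le C$ on $B_r(x)\times[T-r^2,T)$, and then (exactly as in your $T<\hat T$ case) one shows directly that $\Psi_\rho^{\alpha_k}(u_{\alpha_k},(x,T))<\varepsilon_0$ for small $\rho$, whence $x\notin\Sigma_T$. So the fix is simply to replace your bubble-formation claim by the contrapositive argument you already wrote down for part (i).
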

Moreover, we can obtain more refined information about the
singularity points of $u$ and $\Sigma$ (see Lemma
\ref{thm:right}).

In the final part of this paper, we apply the $\alpha-$flow to
study a minimizing sequence in a given homotopy class. Let $u_i$
be a sequence of smooth maps minimizing $E(u)=\int_M \abs{\nabla
u}^2 dv$ in  a fixed homotopy class of maps. Since $u_i$ is
bounded in $W^{1,2}$, there is a weak limit $u$ in $W^{1,2}(M,N)$.
In general, $u$ may not be in the same homotopy class, but we can
show:
\begin{thm}\label{thm:lin}
Let $u$ be the weak limit of  above minimizing sequence $\{ u_i
\}$. Then it is a harmonic map from $M$ to $N$ and there exist
harmonic maps $\omega_k: S^2\to N$ with $k=1,\cdots,l$ such that
  \begin{equation}\label{eqn:identity}
      \lim_{i\to \infty} E(u_i)=E(u) +\sum_{k=1}^l E(\omega_k).
  \end{equation}
  Moreover, if $\pi_2(N)$ is trivial, then $u_i$ converges strongly to $u$ in $W^{1,2}(M,N)$ and $u$ is a minimizer in the homotopy class of $u_i$.
\end{thm}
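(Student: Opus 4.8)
The plan is to bridge the static minimizing sequence $\{u_i\}$ with the parabolic theory developed in Theorems \ref{thm:global}--\ref{thm:compare} by running, for each $i$, the Sacks-Uhlenbeck flow with initial datum $u_i$ and a suitably chosen $\alpha_i \to 1$. First I would fix $\alpha>1$ and replace each $u_i$ by its flow image: let $u_{i}^{\alpha}(\cdot,t)$ solve (\ref{eqn:flow1}) with $u_{i}^{\alpha}(\cdot,0)=u_i$. Since the flow is homotopy-preserving (Theorem \ref{thm:global}), every $u_i^\alpha(\cdot,t)$ stays in the fixed homotopy class, and one checks that $E(u_i^\alpha(\cdot,t))$ is essentially nonincreasing up to errors controlled by $\alpha-1$ and by $E_\alpha(u_i)-E(u_i)$; in particular the flow does not raise the Dirichlet energy above $\lim_i E(u_i)+o(1)$. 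Hence for each $i$ we may pick $\alpha_i\downarrow 1$ and a time $t_i$ (e.g. $t_i\to\infty$ slowly) so that $v_i:=u_i^{\alpha_i}(\cdot,t_i)$ is an \emph{almost critical} map for $E_{\alpha_i}$ in the homotopy class, with $E(v_i)\le \lim_j E(u_j)+o(1)$ and $\|\partial_t u\|_{L^2}\to 0$ along a slice. The point of this reduction is that $v_i$ now behaves like the sequence of $\alpha_i$-harmonic maps studied by Sacks--Uhlenbeck.

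Next I would invoke the bubbling analysis for such a sequence. By the $\eps_0$-regularity built into the definition of $\Sigma$ in Theorem \ref{thm:routine} (applied at a fixed late time, where the flow is nearly stationary), the energy density of $v_i$ concentrates at finitely many points $p_1,\dots,p_l\in M$; away from these points $v_i\to v$ in $C^\infty_{loc}$ and $v$ is a harmonic map (the $(\alpha_i-1)$-term drifts to zero in the limit by the a priori estimates underlying Theorem \ref{thm:global}). Since $v$ is obtained as a smooth limit of the flow applied to a minimizing sequence, $v$ must in fact coincide with the weak limit $u$ of $\{u_i\}$ itself: indeed $u_i\to u$ weakly in $W^{1,2}$, the flow moves $u_i$ a distance $\to 0$ in $W^{1,2}$ on the relevant time interval away from concentration (a consequence of the energy being almost minimal, so almost no energy is dissipated), and so the weak $W^{1,2}$ limit is unchanged. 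This gives that $u$ is harmonic.

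For the energy identity (\ref{eqn:identity}), the standard neck analysis applies: around each concentration point $p_k$, rescale $v_i$ at the concentration scale to obtain, in the limit, a nonconstant harmonic map $\omega_k:\Real^2\to N$ of finite energy, which by the removable singularity theorem of Sacks--Uhlenbeck extends to a harmonic map $S^2\to N$. The heart of the matter is the \emph{no-neck-energy} statement: $\lim_i E(v_i)=E(u)+\sum_{k=1}^l E(\omega_k)$, i.e. the energy lost in the annular necks between the body map and each bubble tends to zero. I would prove this exactly as in the energy-identity literature for $\alpha$-harmonic maps --- a Pohozaev-type identity on the neck annulus together with the $\eps_0$-small-energy $L^2$--$L^\infty$ control of $|\nabla v_i|$ on dyadic subannuli --- the one new ingredient being that $v_i$ is only \emph{almost} $\alpha_i$-harmonic, so the Pohozaev identity picks up an extra term that is bounded by $\|\partial_t u\|_{L^2(\text{neck})}\cdot\|\nabla u\|_{L^2}$, which we have arranged to be $o(1)$. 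Combining with $\lim_i E(v_i)\le \lim_i E(u_i)$ and the obvious lower bound $\lim_i E(v_i)\ge E(u)+\sum_k E(\omega_k)$ from lower semicontinuity plus the bubbles, and using that $E(\omega_k)>0$ carries a fixed amount of energy (so the number $l$ is finite), forces equality in (\ref{eqn:identity}) and also $\lim_i E(u_i)=\lim_i E(v_i)$.

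Finally, if $\pi_2(N)=0$ then there are no nonconstant harmonic maps $S^2\to N$, so $l=0$ and no bubbles form; the energy identity becomes $\lim_i E(u_i)=E(u)$, which upgrades weak $W^{1,2}$ convergence $u_i\rightharpoonup u$ to strong convergence, and strong convergence in $W^{1,2}(M,N)$ together with the flow being homotopy-preserving forces $u$ to lie in the homotopy class of the $u_i$ and hence to be a minimizer there. \textbf{The main obstacle} I anticipate is the no-neck-energy estimate in the \emph{almost}-$\alpha_i$-harmonic setting: one must choose the pairing $(\alpha_i,t_i)\to(1,\infty)$ carefully so that simultaneously (a) the non-gradient correction term in (\ref{eqn:flow1}) is negligible in the Pohozaev identity, (b) the tension-field (time-derivative) defect along the slice $t=t_i$ is $o(1)$ in $L^2$, and (c) the energy gap $E_{\alpha_i}(u_i)-E(u_i)$ does not swamp the bubble energies; verifying that these can be arranged jointly, using only the quantitative estimates from Theorems \ref{thm:global} and \ref{thm:routine}, is the delicate point.
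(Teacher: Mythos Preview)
Your overall strategy---run the $\alpha_i$-flow on each $u_i$ to produce an almost-$\alpha_i$-harmonic minimizing sequence $v_i$, then do bubble analysis on $v_i$---matches the paper's. Two technical choices differ but are not fatal: the paper picks a \emph{fixed} time $t_0\in[1/2,1]$ (found by a measure-theoretic pigeonhole argument so that $\int_M|\partial_t u_i(\cdot,t_0)|^2\le 2^{-i}$ for all $i$) rather than letting $t_i\to\infty$, and it identifies the weak limit of $v_i$ with $u$ via the boundedness of the trace operator $W^{1,2}(M\times[0,1])\to L^2(M\times\{0\})$, which is cleaner than your ``the flow moves $u_i$ a distance $\to 0$'' sketch. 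For the energy identity itself the paper does \emph{not} use a Pohozaev neck estimate; instead it argues by contradiction using the minimizing property: assuming neck energy $\ge\xi>0$, it builds (via Parker's bubble tree and an explicit reference map with geodesic necks) a competitor $\tilde u_i$ in the same homotopy class with strictly smaller energy. Your Pohozaev route is a legitimate alternative (in the spirit of Ding--Tian and Li--Wang), but it does not exploit minimality, whereas the paper's argument does.

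There is, however, a genuine gap in your final paragraph. The assertion ``if $\pi_2(N)=0$ then there are no nonconstant harmonic maps $S^2\to N$'' is false: the totally geodesic equator $S^2\hookrightarrow S^3$ is a nonconstant harmonic sphere, yet $\pi_2(S^3)=0$. So you cannot conclude $l=0$ this way, and the rest of that paragraph collapses. The paper's argument is different and does use $\pi_2(N)=0$ correctly: near each concentration point it replaces $v_i$ by a map $\tilde v_i$ that agrees with $v_i$ outside $B_\rho$ and is glued to $u$ inside $B_{\rho/2}$ via the exponential map; since $\pi_2(N)=0$, this surgery does not change the homotopy class. Then minimality gives
\[
E(u)\;\le\;\lim_{i}E(v_i)\;\le\;\lim_i E(\tilde v_i)\;=\;E(u),
\]
forcing $\lim_i E(v_i)=E(u)$ and hence strong $W^{1,2}$ convergence. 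In other words, the role of $\pi_2(N)=0$ is topological (it licenses a competitor construction), not analytic (it does not rule out harmonic spheres). You need to replace your argument for this case accordingly.
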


The last part of this theorem can be compared with Theorem 5.1 of
\cite{SU}, where under the same assumption that $\pi_2(N)$ is
trivial, the existence of a minimizer in each homotopy class  is
proved (see also \cite{SY} and \cite{Lemaire}). We improve the
result a little by obtaining that every smooth minimizing sequence
converges to such a minimizer. In the proof, we use the
$\alpha-$flow to modify the original minimizing sequence and study
the blow-up of the new sequence as Theorem 5.1 in \cite{SU}. The
energy identity of minimizers of the Sacks-Uhlenbeck functional was
implicitly established by Chen and Tian in \cite {CT}. Our proof
for the energy identity is different from one in \cite{CT}. Although
the result of Theorem \ref{thm:lin} may be regarded
as a consequence of the theory developed by Duzaar and Kuwert
\cite{DK}, the advantage is that we can avoid to use the
concept ``weak homotopy class''  of maps in the Sobolev space
$W^{1,2}(M, N)$, which is formulated by big machinery. Finally, we
would like to mention that  the related energy identity of critical
points of the Sacks-Uhlenbeck functionals was recently discussed
by Li and Wang \cite{LiW} and by Lamm \cite{Lamm}.

\begin{rem}
    After we finished a first version of this paper, Yuxiang Li
    informed
    us that in \cite{Yuxiang} they used a similar idea   to show the energy identify for a sequence of minimizers of $E_\alpha$ when $\alpha\to 1$.
   In fact,  while  Li and Wang in \cite{Yuxiang} used a reduction procedure of Ding and Tian
   \cite{DT},
     we use the bubble tree construction of Parker in \cite{P} and Lemma \ref{lem:smallball} to find the connecting
     geodesics, so our approach is different.
\end{rem}

The paper is organized as follows. In Section \ref{sec:global},  we prove Theorem
1.1. In Section \ref{sec:approx}, we give a proof of Theorem 1.2. In Section \ref{sec:lin},
we complete a proof of Theorem 1.3 except for the energy identity, which is proved
in Section \ref{sec:identity}.

\section{global existence and convergence of $\alpha-$flow} \label{sec:global}
This section is devoted to the proof of Theorem \ref{thm:global}.
In local coordinates $g=(g_{ij})$, the $\alpha$-flow can be
written as
\begin{equation*}
        \partial_t u =\frac{1}{\sqrt{\abs{g}}} \pfrac{}{x_i} \left( \sqrt{\abs{g}} g^{ij} \frac {\partial u}{\partial x_j} \right)+
        (\alpha-1) \frac {g^{ij} \pfrac {}{x_i} \abs{\nabla u}^2  \frac {\partial u}{\partial x_j}}{1+\abs{\nabla u}^2} +A(u)(\nabla u,\nabla u).
    \end{equation*}
This system is a nonlinear parabolic system.  For a smooth initial
value $u_0$,  the local existence of the system  can be shown (see
below appendix for details in Section 6); i.e., there exists $T>0$
and a smooth solution $u(x,t)$ defined on $[0,T)$. The proof of
Theorem \ref{thm:global} follows if one can establish $C^k$
uniform estimates of $u(x,t)$ independent of $T$ for any $t\in
[0,T)$.

The first observation is that
\begin{lem}\label{lem:decrease}
    If $u(x,t)$ is a solution to the $\alpha-$flow, then $E_\alpha(u(\cdot,t))$ decreases in $t$.
\end{lem}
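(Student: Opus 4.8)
The plan is to compute $\frac{d}{dt} E_\alpha(u(\cdot,t))$ directly and show it is nonpositive. First I would differentiate under the integral sign: since $E_\alpha(u) = \int_M (1+|\nabla u|^2)^\alpha\, dv$, we get
\begin{equation*}
    \frac{d}{dt} E_\alpha(u(\cdot,t)) = \alpha \int_M (1+|\nabla u|^2)^{\alpha-1}\, \partial_t |\nabla u|^2\, dv = 2\alpha \int_M (1+|\nabla u|^2)^{\alpha-1}\, \langle \nabla u, \nabla \partial_t u\rangle\, dv.
\end{equation*}
Then I would integrate by parts to move the derivative off $\partial_t u$, producing
\begin{equation*}
    \frac{d}{dt} E_\alpha(u(\cdot,t)) = -2\alpha \int_M \operatorname{div}\!\bigl( (1+|\nabla u|^2)^{\alpha-1} \nabla u \bigr) \cdot \partial_t u\, dv.
\end{equation*}
The key observation is that the divergence term is, up to the factor $2\alpha(1+|\nabla u|^2)^{\alpha-1}$, precisely the tangential part of the right-hand side of the $\alpha$-flow equation \eqref{eqn:flow1}. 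Indeed, expanding the divergence gives $(1+|\nabla u|^2)^{\alpha-1}\bigl( \triangle_M u + (\alpha-1)\frac{\nabla|\nabla u|^2\cdot\nabla u}{1+|\nabla u|^2}\bigr)$, so that
\begin{equation*}
    \operatorname{div}\!\bigl( (1+|\nabla u|^2)^{\alpha-1} \nabla u \bigr) = (1+|\nabla u|^2)^{\alpha-1}\bigl( \partial_t u - A(u)(\nabla u,\nabla u)\bigr)
\end{equation*}
along any solution.

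Substituting this back, I would obtain
\begin{equation*}
    \frac{d}{dt} E_\alpha(u(\cdot,t)) = -2\alpha \int_M (1+|\nabla u|^2)^{\alpha-1}\, |\partial_t u|^2\, dv + 2\alpha \int_M (1+|\nabla u|^2)^{\alpha-1}\, A(u)(\nabla u,\nabla u)\cdot \partial_t u\, dv.
\end{equation*}
The first term is manifestly $\leq 0$, so the whole point reduces to showing the second term vanishes. This follows because the second fundamental form $A(u)(\nabla u,\nabla u)$ is normal to $N$ at $u(x,t)$, while $\partial_t u$ is tangent to $N$ (the solution stays on $N$, so $\partial_t u \in T_{u}N$); hence the pointwise inner product $A(u)(\nabla u,\nabla u)\cdot \partial_t u$ is identically zero. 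I would state this orthogonality explicitly, perhaps recalling that differentiating the constraint $u \in N$ in $t$ gives $\partial_t u \perp \nu$ for every normal vector $\nu$. Consequently $\frac{d}{dt}E_\alpha(u(\cdot,t)) = -2\alpha \int_M (1+|\nabla u|^2)^{\alpha-1}|\partial_t u|^2\, dv \leq 0$, which proves the lemma.

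The computation is entirely routine; the only point requiring a little care is the identification of the divergence term with the tangential part of the flow, i.e. verifying that the extra lower-order term $(\alpha-1)\frac{\nabla|\nabla u|^2\cdot\nabla u}{1+|\nabla u|^2}$ in \eqref{eqn:flow1} is exactly what is needed to make $\operatorname{div}((1+|\nabla u|^2)^{\alpha-1}\nabla u)$ appear — this is in fact the reason the flow \eqref{eqn:flow1} is designed the way it is, even though (as the authors remark) it is not the actual gradient flow of $E_\alpha$. I expect no genuine obstacle here; one should simply be careful with the curved-metric version, writing everything in the local form $\frac{1}{\sqrt{|g|}}\partial_{x_i}(\sqrt{|g|}\,g^{ij}(1+|\nabla u|^2)^{\alpha-1}\partial_{x_j}u)$ so that the integration by parts on $M$ is clean, and noting that the monotonicity conclusion is independent of the sign of $\alpha - 1$.
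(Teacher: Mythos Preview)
Your proposal is correct and is exactly the computation the paper has in mind: its one-line proof ``Multiply (\ref{eqn:flow1}) by $(1+\abs{\nabla u}^2)^{\alpha-1}\partial_t u$ and integrate by parts'' is the same argument approached from the other end, relying on the same identification $\operatorname{div}\bigl((1+\abs{\nabla u}^2)^{\alpha-1}\nabla u\bigr)=(1+\abs{\nabla u}^2)^{\alpha-1}(\partial_t u - A(u)(\nabla u,\nabla u))$ and the orthogonality $A(u)(\nabla u,\nabla u)\cdot\partial_t u=0$.
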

\begin{proof}
    Multiply (\ref{eqn:flow1}) by $(1+\abs{\nabla u}^2)^{\alpha-1} \partial_t u$ and integrate by parts.
\end{proof}

The next lemma is a local version of the energy inequality.
\begin{lem}
    Let $u$ be a solution to the $\alpha-$flow.  Then
\begin{equation}\label{eqn:local}
    \int_{B_R(x)} (1+\abs{\nabla u(\cdot, t_2)}^2)^\alpha dv \leq \int_{B_{2R}(x)} (1+\abs{\nabla u(\cdot,t_1)}^2)^\alpha dv +C\frac{t_2-t_1}{R^2}E_0
\end{equation}
for $t_1<t_2$. Here $E_0$ is some upper-bound of the overall
energy.
\end{lem}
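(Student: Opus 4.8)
The plan is to derive the local energy inequality (\ref{eqn:local}) by testing the $\alpha$-flow equation against $\varphi^2 (1+\abs{\nabla u}^2)^{\alpha-1}\partial_t u$, where $\varphi$ is a fixed cutoff function supported in $B_{2R}(x)$, equal to $1$ on $B_R(x)$, and satisfying $\abs{\nabla \varphi}\leq C/R$. Multiplying (\ref{eqn:flow1}) by this test function and integrating over $M$, the first two terms on the right-hand side of (\ref{eqn:flow1}) combine—just as in the global computation of Lemma \ref{lem:decrease}—to produce a full time derivative of the localized energy density $(1+\abs{\nabla u}^2)^\alpha$, up to an error term in which the time derivative falls on $\varphi$. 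Since $\varphi$ is time-independent that particular error vanishes, but integrating by parts in space to move derivatives off $\partial_t u$ produces error terms carrying one factor of $\nabla\varphi$, hence a factor $1/R$; these are the terms to control. The third term $A(u)(\nabla u,\nabla u)$, tested against $\varphi^2(1+\abs{\nabla u}^2)^{\alpha-1}\partial_t u$, integrates to zero pointwise since $A(u)(\nabla u,\nabla u)\perp T_u N$ while $\partial_t u\in T_u N$; this is the standard orthogonality used for the harmonic map flow and it survives the extra scalar weight $(1+\abs{\nabla u}^2)^{\alpha-1}$.

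Carrying this out, I expect an identity of the schematic form
\begin{equation*}
    \frac{d}{dt}\int_M \varphi^2 (1+\abs{\nabla u}^2)^\alpha\, dv + 2\alpha\int_M \varphi^2 (1+\abs{\nabla u}^2)^{\alpha-1}\abs{\partial_t u}^2\, dv = \mathcal{E}(t),
\end{equation*}
where the error $\mathcal{E}(t)$ is a sum of terms each bounded, by Cauchy--Schwarz, by
\begin{equation*}
    \frac{1}{2}\int_M \varphi^2 (1+\abs{\nabla u}^2)^{\alpha-1}\abs{\partial_t u}^2\, dv + \frac{C}{R^2}\int_M (1+\abs{\nabla u}^2)^\alpha\, dv.
\end{equation*}
The first piece is absorbed into the good term on the left-hand side, leaving
\begin{equation*}
    \frac{d}{dt}\int_M \varphi^2 (1+\abs{\nabla u}^2)^\alpha\, dv \leq \frac{C}{R^2}\int_M (1+\abs{\nabla u}^2)^\alpha\, dv \leq \frac{C}{R^2} E_0,
\end{equation*}
using Lemma \ref{lem:decrease} to bound the total Sacks--Uhlenbeck energy at time $t$ by its initial value $E_\alpha(u_0)=:E_0$. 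Integrating this differential inequality from $t_1$ to $t_2$ and using $\varphi\equiv 1$ on $B_R(x)$ and $\varphi\equiv 0$ outside $B_{2R}(x)$ gives exactly (\ref{eqn:local}).

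The main obstacle is the bookkeeping in the error term $\mathcal{E}(t)$: when one integrates by parts to convert $\triangle_M u$ acting against $\varphi^2(1+\abs{\nabla u}^2)^{\alpha-1}\partial_t u$ into a time derivative of $\varphi^2(1+\abs{\nabla u}^2)^\alpha$, the weight $(1+\abs{\nabla u}^2)^{\alpha-1}$ itself has both a spatial gradient (which must recombine with the second term of (\ref{eqn:flow1}), the genuinely Sacks--Uhlenbeck contribution) and a time derivative, and one must check that all resulting terms either assemble into the asserted full derivative, are of the absorbable form $\varphi^2(1+\abs{\nabla u}^2)^{\alpha-1}\abs{\partial_t u}^2$, or come with a factor $\nabla\varphi$ and are thus $O(1/R^2)$ after Cauchy--Schwarz. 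A minor point to watch is that $(1+\abs{\nabla u}^2)^{\alpha-1}\leq (1+\abs{\nabla u}^2)^\alpha$ since $\alpha>1$, so the weight on the absorbed $\abs{\partial_t u}^2$ term and on the error are consistently controlled by the energy density. Everything else is routine, and the smoothness of $u$ on $[0,T)$ (from local existence) justifies all the integrations by parts.
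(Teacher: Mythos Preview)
Your proposal is correct and follows essentially the same approach as the paper: test (\ref{eqn:flow1}) against $\varphi^2(1+\abs{\nabla u}^2)^{\alpha-1}\partial_t u$, use the orthogonality $A(u)(\nabla u,\nabla u)\perp \partial_t u$, observe that the first two right-hand terms combine into divergence form so that integration by parts yields the time derivative of the localized $\alpha$-energy plus a single cross term carrying $\nabla\varphi$, and absorb that cross term by Cauchy--Schwarz. The paper's proof is slightly terser but the computation and the constants match yours.
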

\begin{proof} Let $\varphi$ be a cut-off function supported in $B_R(x)$ and $\varphi\equiv 1$ on $B_{R/2}(x)$.
Multiplying the equation (\ref{eqn:flow1}) by $(1+\abs{\nabla u}^2)^{\alpha-1}\partial_t u \varphi^2$, we obtain
\begin{eqnarray*}
    &&\int_M (1+\abs{\nabla u}^2)^{\alpha-1} \abs{\partial_t u}^2 \varphi^2 dv\\
    &=& -\int_M (1+\abs{\nabla u}^2)^{\alpha-1} \nabla u\nabla \pfrac{u}{t}\varphi^2
    dv
     -\int_M (1+\abs{\nabla u}^2)^{\alpha-1} \nabla u \pfrac{u}{t} 2 \varphi \nabla \varphi dv\\
    &\leq& -\frac{1}{\alpha} \frac{d}{dt} \int_M (1+\abs{\nabla u}^2)^\alpha \varphi^2
    dv
     +\frac{1}{2}\int_M (1+\abs{\nabla u}^2)^{\alpha-1} \abs{\partial_t u}^2 \varphi^2 dv \\
    && + \int_{B_{2R(x)}} (1+\abs{\nabla u}^2)^{\alpha-1} \abs{\nabla u}^2 \frac{C}{R^2} dv.
\end{eqnarray*}
Hence,
\begin{equation*}
    \int_{B_R(x)} (1+\abs{\nabla u(\cdot, t_2)}^2)^\alpha dv \leq \int_{B_{2R}(x)} (1+\abs{\nabla u(\cdot,t_1)}^2)^\alpha dv +C\frac{t_2-t_1}{R^2}E_0.
\end{equation*}
\end{proof}
The second  key  for the proof of Theorem \ref{thm:global} is  to
derive a Bochner type formula. We consider a scaled version of
(\ref{eqn:flow1}) for some $r>0$,
\begin{equation}\label{eqn:flow2}
    (r^2+\abs{\nabla u}^2)^{\alpha-1}\partial_t u= \mbox{div} \left( (r^2+\abs{\nabla u}^2)^{\alpha-1} \nabla u \right)+(r^2+\abs{\nabla u}^2)^{\alpha-1}A(u)(\nabla u,\nabla u).
\end{equation}
Locally, we choose an orthonormal frame $\{e_1,e_2\}$. We use
$\nabla_i$ for the first covariant derivative with respect to
$e_i$. We denote   by $u_{ji}$  the second covariant derivatives
of $u$ and so on.  Of course, we assume summation convention for
repeated index. Then
\begin{lem} \label{lem:bochner}
    Let $u(x,t)$ be a smooth solution to the scaled $\alpha-$flow (\ref{eqn:flow2}). If $\alpha-1$ is small, then the following Bochner type formula is true:
    \begin{equation}\label{eqn:bochner}
        \frac{\partial}{\partial t}e(u)-\nabla_i\left( (\delta_{ij}+2(\alpha-1)\frac{u^\beta_i u^\beta_j}{r^2+\abs{\nabla u}^2}) \nabla_j e(u) \right)\leq C e(u)(e(u)+1),
    \end{equation}
    where $e(u):=\abs{\nabla u}^2$.
\end{lem}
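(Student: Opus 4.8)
The plan is to compute $\frac{\partial}{\partial t} e(u)$ directly from the scaled flow~(\ref{eqn:flow2}) and then massage the resulting identity into the stated divergence-plus-error form. First I would rewrite~(\ref{eqn:flow2}) as
\begin{equation*}
    \partial_t u^\beta = \triangle u^\beta + (\alpha-1)\frac{\nabla_i(r^2+\abs{\nabla u}^2)\, u^\beta_i}{r^2+\abs{\nabla u}^2} + A^\beta(u)(\nabla u,\nabla u),
\end{equation*}
so that $\partial_t u^\beta = L u^\beta + A^\beta$, where $L$ denotes the linear second-order operator with principal part $\triangle + 2(\alpha-1)\frac{u^\gamma_i u^\gamma_j}{r^2+\abs{\nabla u}^2}\nabla_i\nabla_j$ (after expanding $\nabla_i\abs{\nabla u}^2 = 2 u^\gamma_{ki}u^\gamma_k$ and relabelling). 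Then $\frac{\partial}{\partial t}e(u) = 2 u^\beta_k \nabla_k(\partial_t u^\beta) = 2 u^\beta_k \nabla_k(L u^\beta) + 2 u^\beta_k\nabla_k A^\beta$. The term $2 u^\beta_k \nabla_k A^\beta$ is the standard one from the Eells--Sampson computation: since $A(u)(\nabla u,\nabla u)$ is algebraic of order two in $\nabla u$ with coefficients smooth in $u$, one gets $\abs{2 u^\beta_k\nabla_k A^\beta}\leq C e(u)(e(u)+1)$ after a Cauchy--Schwarz that absorbs the top-order piece into the good second-derivative term coming from the principal part; I would reuse this verbatim.

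The heart of the matter is commuting $\nabla_k$ past $L$. Writing $a_{ij} = \delta_{ij} + 2(\alpha-1)\frac{u^\gamma_i u^\gamma_j}{r^2+\abs{\nabla u}^2}$, we have $Lu^\beta = \nabla_i(a_{ij}\nabla_j u^\beta) - (\nabla_i a_{ij})\nabla_j u^\beta + \dots$; contracting with $2u^\beta_k$ and differentiating, the leading term should reorganize, via the product rule and an integration-by-parts-style rearrangement at the pointwise level, into $\nabla_i(a_{ij}\nabla_j e(u))$ plus terms where derivatives have landed on $a_{ij}$ or where curvature of $M$ and of $N$ enters through commutators $[\nabla_k,\nabla_i\nabla_j]u^\beta$. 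The Ricci commutation on the domain produces $\mathrm{Rm}_M * \nabla u * \nabla u$, bounded by $C e(u)$; the commutation that brings in the target is already accounted for in the $A^\beta$ term. The genuinely new terms are those involving $\nabla a_{ij}$: schematically $\nabla a \sim (\alpha-1)\frac{\nabla^2 u * \nabla u}{r^2+\abs{\nabla u}^2}$, which is $O(1)\cdot \frac{\abs{\nabla^2 u}\abs{\nabla u}}{r^2+\abs{\nabla u}^2}$. These multiply first derivatives $\nabla e(u)\sim \nabla^2 u * \nabla u$, so a priori one sees $(\alpha-1)\frac{\abs{\nabla^2 u}^2\abs{\nabla u}^2}{r^2+\abs{\nabla u}^2}$, which is comparable to $\abs{\nabla^2 u}^2$ and therefore dangerous.

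The main obstacle, then, is showing that this potentially top-order bad term is actually harmless. The mechanism I expect to exploit is that $a_{ij}$ is a small perturbation of the identity: since $0\le \frac{u^\gamma_i u^\gamma_j}{r^2+\abs{\nabla u}^2}$ is a bounded nonnegative matrix with trace $\le 1$, for $\alpha-1$ small the matrix $a_{ij}$ is uniformly elliptic with ellipticity constants in $[\tfrac12,2]$, so the ``good term'' $-2a_{ij}u^\beta_{ki}u^\beta_{kj}$ (which is genuinely present with a favourable sign after all the rearranging, since $-2u^\beta_k\nabla_k(\nabla_i(a_{ij}\nabla_j u^\beta))$ integrated pointwise against the Hessian yields $-2a_{ij}\abs{\nabla^2 u}^2$-type control) dominates any error carrying an explicit factor of $(\alpha-1)$. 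Concretely, every new term from $\nabla a_{ij}$ carries such a factor, so Cauchy--Schwarz lets us write it as $(\alpha-1)C\abs{\nabla^2 u}^2 + (\alpha-1)C e(u)^2 \cdot(\text{bounded factors})$, and for $\alpha-1$ small the first piece is swallowed by the good term. What survives on the right is then $C e(u)(e(u)+1)$ as claimed. I would organize the write-up as: (1) rewrite the flow and identify $a_{ij}$; (2) compute $\partial_t e(u)$ and extract the divergence structure; (3) list the three types of error — $M$-curvature, $N$-second-fundamental-form, and $\nabla a_{ij}$ — and bound each, with the third handled by the smallness of $\alpha-1$ against the ellipticity of $a_{ij}$.
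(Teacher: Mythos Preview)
Your proposal is correct and follows essentially the same route as the paper: the paper also identifies the elliptic coefficients $a_{ij}=\delta_{ij}+2(\alpha-1)\frac{u^\beta_i u^\beta_j}{r^2+\abs{\nabla u}^2}$, uses the Ricci identity to commute third derivatives, and absorbs every error carrying an explicit $(\alpha-1)$ factor into the good $\abs{\nabla^2 u}^2$ term coming from the principal part. The only organizational difference is that the paper expands $\nabla_i(a_{ij}\nabla_j e(u))$ first and bounds it from below by $\abs{\nabla^2 u}^2 + 2u^\gamma_k\nabla_k\bigl(u^\gamma_{ii}+2(\alpha-1)\frac{u^\gamma_i u^\beta_j u^\beta_{ji}}{r^2+\abs{\nabla u}^2}\bigr) - Ce(u)$, then invokes the flow equation to replace the bracket by $\partial_t u^\gamma - A^\gamma$, whereas you start from $\partial_t e(u)$ and work toward the divergence form; the computation and the mechanism for controlling the dangerous $(\alpha-1)\abs{\nabla^2 u}^2$-type terms are identical.
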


\begin{proof}
  The proof is by computation.
In the following proofs, we assume $\alpha-1$ is small whenever
necessary. In a local frame, we have
\begin{equation*}
       \nabla_j e(u)=2u^{\gamma}_{k}u^{\gamma}_{kj},\quad
       |\nabla^2 u|^2=\sum_{k,i, \gamma} \abs{u^\gamma_{ki}}^2
    \end{equation*}
Then we have
    \begin{eqnarray*}
        &&\nabla_i\left( (\delta_{ij}+2 (\alpha-1)\frac{u^\beta_i u^\beta_j}{r^2+\abs{\nabla u}^2}) \nabla_j e(u) \right)\\
        &=& 2\nabla_i \left( u^\gamma_k u^\gamma_{ki} +2 (\alpha-1)\frac{ u^\beta_i u^\beta_j u^\gamma_k u^\gamma_{kj}}{r^2+\abs{\nabla u}^2} \right) \\
        &\geq& 2 |\nabla^2 u|^2+2 u^\gamma_k u^\gamma_{iik}+  4(\alpha-1)\nabla_i \left( \frac{ u^\gamma_i u^\gamma_k u^\beta_j u^\beta_{kj}}{r^2+\abs{\nabla u}^2} \right) -Ce(u)\\
        &\geq& \frac{3}{2} |\nabla^2 u|^2 +2u^\gamma_k u^\gamma_{iik}+  4(\alpha-1) \frac{ u^\gamma_i u^\gamma_k u^\beta_j u^\beta_{kji}}{r^2+\abs{\nabla u}^2}-Ce(u) \\
        &\geq&  |\nabla^2 u|^2+2u^\gamma_k \nabla_k \left( u^\gamma_{ii}+ 2 (\alpha-1) \frac{u^\gamma_i u^\beta_j u^\beta_{ji}}{r^2+\abs{\nabla u}^2}\right)-Ce(u).
    \end{eqnarray*}
    Here we have used twice Ricci identity for switching third order derivatives. Using (2.2), we have
    \begin{eqnarray*}
        &&\frac{\partial}{\partial t} e(u)-\nabla_i \left( (\delta_{ij}+ 2(\alpha-1)\frac{u^\beta_i u^\beta_j}{r^2+\abs{\nabla u}^2}) \nabla_j e(u) \right) \\
        &\leq& - |\nabla^2 u|^2  -2u_k^\gamma \nabla_k \left( A^\gamma(u)(\nabla u,\nabla u) \right)+Ce(u) \\
        &\leq& C e(u)(e(u)+1).
    \end{eqnarray*}
\end{proof}

Using this Bochner formula, we can prove a small energy
estimate following a method of Schoen \cite{Sch} and Struwe
\cite{mono}. In our case, the small energy assumption is
automatically true because of the H\"older inequality and the
$E_\alpha$ energy bound. More precisely, we have
\begin{lem} \label{lem:Linfinity}
    There is a constant $C$ independent of $(x,t)$ for $x\in M$ and $t\in [0,T)$ such that
    \begin{equation*}
        \abs{\nabla u}(x,t)\leq C.
    \end{equation*}
\end{lem}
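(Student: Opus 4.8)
The plan is to combine the local energy inequality from \eqref{eqn:local} with the Bochner-type inequality \eqref{eqn:bochner} via a Moser iteration / mean value argument, exactly as in Schoen \cite{Sch} and Struwe \cite{mono}. The crucial point, as the authors already hint, is that the usual smallness hypothesis ``$\int_{B_R} e(u)\,dv < \eps_0$'' comes for free here: by H\"older's inequality, $\int_{B_R} e(u)\,dv = \int_{B_R} \abs{\nabla u}^2\,dv \le \left( \int_{B_R} (1+\abs{\nabla u}^2)^\alpha\,dv \right)^{1/\alpha} \abs{B_R}^{1-1/\alpha}$, and since $E_\alpha(u(\cdot,t))$ is bounded uniformly in $t$ by Lemma \ref{lem:decrease} (it decreases from $E_\alpha(u_0)$), the right-hand side is $\le C(E_\alpha(u_0))\, R^{2(1-1/\alpha)}$, which tends to $0$ as $R\to 0$ uniformly in $(x,t)$ and in $\alpha$ close to $1$. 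So one fixes $R_0>0$ small enough that this Dirichlet energy on every parabolic ball of spatial radius $R_0$ is below the threshold $\eps_0$ required by the iteration.

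The main work is the mean value inequality. First I would rewrite \eqref{eqn:bochner} as a differential inequality of the form $\partial_t e(u) - \nabla_i(a_{ij}\nabla_j e(u)) \le C\, e(u)^2 + C\, e(u)$, where $a_{ij} = \delta_{ij} + 2(\alpha-1) u^\beta_i u^\beta_j/(r^2+\abs{\nabla u}^2)$ is uniformly elliptic for $\alpha-1$ small (eigenvalues between $1$ and $1+2(\alpha-1)$), so the operator is a genuine uniformly parabolic divergence-form operator with the good structure needed for De Giorgi--Nash--Moser. Testing this inequality against $\varphi^2 e(u)^{p}$ for a suitable cutoff $\varphi$ in a parabolic cylinder $P_r = B_r\times(t-r^2,t]$ and absorbing the quadratic term $C\int \varphi^2 e(u)^{p+2}$ using the Sobolev inequality on $M$ — here is exactly where the smallness $\int_{B_r} e(u) < \eps_0$ is used to absorb the borderline term into the good gradient term — one obtains a reverse-H\"older type estimate relating $\|e(u)\|_{L^{q}(P_{r/2})}$ to $\|e(u)\|_{L^{q'}(P_r)}$ with $q > q'$. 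Iterating over a sequence of shrinking cylinders gives
\begin{equation*}
    \sup_{P_{R_0/2}(x,t)} e(u) \le \frac{C}{R_0^{2}}\int_{t-R_0^2}^{t}\int_{B_{R_0}(x)} e(u)\,dv\,ds \le \frac{C}{R_0^{2}}\cdot R_0^2 \cdot E_0 = C E_0,
\end{equation*}
where in the last step I bound the space-time integral of $e(u)$ by $R_0^2$ times the $t$-independent energy bound $E_0 = \sup_t E_\alpha(u(\cdot,t))^{1/\alpha}\abs{M}^{1-1/\alpha}$. For $t < R_0^2$ one uses instead a cylinder reaching down to the initial time $t=0$, where $e(u_0)$ is bounded since $u_0$ is smooth, and the same iteration applies with the initial bound absorbed. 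Combining the two cases yields $\abs{\nabla u}(x,t) \le C$ uniformly for all $(x,t)\in M\times[0,T)$.

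The main obstacle I anticipate is the careful bookkeeping in the Moser iteration to make sure all constants are genuinely independent of $T$ and of $\alpha$ (for $\alpha-1$ small): one must check that the ellipticity constants of $a_{ij}$, the Sobolev constant of $M$, and the $\eps_0$-absorption all stay uniform, and that the coefficient $C$ in \eqref{eqn:bochner} — which came from curvature of $M$, $N$ and the second fundamental form $A$ — does not degenerate as $\alpha\to 1$ (it does not, since those geometric quantities are fixed and the $\alpha$-dependence only enters through the harmless factor $(\alpha-1)$, which is small). A secondary technical point is handling the cylinders that touch $t=0$, where one needs a version of the mean value inequality up to the initial boundary, using the smoothness of $u_0$ to control $e(u)$ there; this is standard for parabolic equations but must be stated. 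Once $\abs{\nabla u}$ is bounded, the equation \eqref{eqn:flow1} becomes uniformly parabolic with bounded coefficients, and bootstrapping via parabolic Schauder estimates gives the uniform $C^k$ bounds needed to complete Theorem \ref{thm:global}.
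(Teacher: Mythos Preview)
Your overall plan—uniform $E_\alpha$ bound, H\"older for local small energy, Bochner formula, Moser—matches the paper in spirit, but the execution differs at the key step. The paper does \emph{not} iterate directly on the nonlinear inequality $\partial_t e - Le \le Ce(e+1)$. Instead it uses Schoen's point-picking trick: choose $\rho\in[0,r_0]$ maximizing $(r_0-\rho)^2\sup_{P_\rho}e$, then rescale around the maximum so that the rescaled density $e(v)$ satisfies $e(v)\le 4$ on a unit parabolic cylinder with $e(v)(0,0)=1$. After rescaling the Bochner inequality becomes \emph{linear}, $\partial_t e(v)-L e(v)\le Ce(v)$, and standard Moser gives $1=e(v)(0,0)\le C\bigl(\int e(v)^2\bigr)^{1/2}\le C\bigl(\int e(v)\bigr)^{1/2}\le C\varepsilon_0^{1/2}$, a contradiction for $\varepsilon_0$ small. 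This linearization-by-rescaling step is what your proposal omits.

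Your direct route can be made to work, but the absorption step as you describe it is borderline in two space dimensions: writing $\int \varphi^2 e^{p+2}=\int e\,(\varphi e^{(p+1)/2})^2$ and using only $\|e\|_{L^1(B_r)}<\varepsilon_0$ forces an $L^\infty$ bound on the other factor, which $W^{1,2}$ does not provide on a surface. You actually need $\|e\|_{L^q(B_r)}$ small for some $q>1$; this \emph{is} available here by interpolating the $L^1$ smallness against the uniform $L^\alpha$ bound coming from $E_\alpha$, but it should be said explicitly. With that correction your argument goes through, though the constants depend on $\alpha$ via the Sobolev constant for $W^{1,2}\hookrightarrow L^{2q/(q-1)}$. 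That is acceptable: the lemma only asserts independence of $(x,t)$, not of $\alpha$, and the paper's proof likewise produces an $\alpha$-dependent bound (through the choice of $r_0$). The $\alpha$-uniform estimates are handled separately in Section~\ref{sec:approx} with the monotonicity formula and a second point-picking argument applied to $e_\alpha$.
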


\begin{proof}
    For any $t\in [0,T)$, Lemma \ref{lem:decrease} implies that
    \begin{equation*}
        \int_M (1+\abs{\nabla u}^2)^\alpha(t) dv\leq C.
    \end{equation*}
   By the H\"older inequality, there exists a uniform constant $r_0>0$ such that for all $x\in M$ and $t<T$,
    \begin{equation*}
        \int_{B_{r_0}(x)} \abs{\nabla u(\cdot,t)}^2 dv\leq \varepsilon_0.
    \end{equation*}
    Here $\varepsilon_0$ is a small constant to be determined. Since $u$ is smooth in $M\times [0,T)$, it suffices to prove the lemma near $t=T$. By choosing $r_0$ sufficiently small, we may assume $r_0^2<T$. Take any $x_0\in M$ and $t_0\in(r^2_0,T)$. Set
    \begin{equation*}
        P_r=\{(x,t)|x\in B_r(x_0) \quad \mbox{and } t_0-r_0^2\leq t\leq t_0\}.
    \end{equation*}
    We can find $\rho\in [0,r_0]$ such that
    \begin{equation*}
        (r_0-\rho)^2 \sup_{P_\rho} e(u)=\max_{\sigma\in [0,r_0]} \left\{ (r_0-\sigma)^2 \sup_{P_\sigma}e(u)\right\}.
    \end{equation*}
    Let $(x_1,t_1)$ be the point in $P_\rho$ such that
    \begin{equation*}
        e\stackrel{\triangle}{=}e(u)(x_1,t_1)=\sup_{P_\rho} e(u).
    \end{equation*}
    If $e(r_0-\rho)^2\leq 4$, then
    \begin{equation*}
        (\frac{r_0}{2})^2 \sup_{P_{r_0/2}} e(u) \leq e(r_0-\rho)^2 \leq 4,
    \end{equation*}
    which means the lemma is proved. Hence, we may assume that $e(r_0-\rho)^2>4$. Set
    \begin{equation*}
        v(x,t)=u(x_1+\frac{x}{e^{1/2}},t_1+\frac{t}{e})
    \end{equation*}
    for $(x,t)\in B_1(0)\times [-1,0]$. By our definition of $v$ and the scaling invariance of Dirichlet energy, we have
    \begin{equation}\label{eqn:moser}
        \sup_{t\in [-1,0]}\int_{B_1(0)} e(v) dv \leq \varepsilon_0.
    \end{equation}
    Moreover, we have $e(v)(0,0)=1$ and
    \begin{equation}\label{eqn:four}
        \sup_{B_1(0)\times [-1,0]} e(v)\leq e^{-1}\sup_{P_{\frac{r_0+\rho}{2}}} e(u) \leq e^{-1} \frac{e (r_0-\rho)^2}{(r_0-\frac{r_0+\rho}{2})^2}\leq 4.
    \end{equation}
    By Lemma \ref{lem:bochner} and (\ref{eqn:four}),
    \begin{equation*}
        \pfrac{}{t}e(v)-\nabla_i (a_{ij}(v) \nabla_j e(v)) \leq Ce(v),
    \end{equation*}
    where
\[a_{ij}(v)=\delta_{ij}+2(\alpha-1)\frac{v^\beta_i v^\beta_j}{r^2+\abs{\nabla
v}^2}.
\]
The symmetric matrix $(a_{ij}(v))$  has eigenvalues satisfy the
uniform elliptic condition. In local coordinates, we can write the
above inequality as
    \begin{equation*}
        \partial_t e(v) -\frac{1}{\sqrt{\abs{g}}} \pfrac{}{x_i} \left( \sqrt{\abs{g}} (g^{ij}+\frac{(\alpha-1)}{r^2+\abs{\nabla v}^2}
        g^{ik}\pfrac{v^\beta}{x_k} g^{jl}\pfrac{v^\beta}{x_l} ) \pfrac{e(v)}{x_j}  \right) \leq Ce(v).
    \end{equation*}
    By a standard Moser iteration and (\ref{eqn:moser}), we have
    \begin{equation*}
        1=e(v)(0,0)\leq C \left( \int_{B_1(0)\times [-1,0]} e(v)^2 \right)^{1/2}.
    \end{equation*}
    By (\ref{eqn:four}), we obtain
    \begin{equation*}
        1\leq C\int_{B_1(0)\times [-1,0]} e(v) \leq C\varepsilon_0,
    \end{equation*}
    which is a contradiction if we choose $\varepsilon_0$ small.
\end{proof}

We now complete the proof of Theorem \ref{thm:global}.
\begin{proof}
    Lemma \ref{lem:Linfinity} provides the $C^0-$uniform estimate for $u(x,t)$. Then we show higher order estimates. For $x_0\in M$ and $T/2<t_0<T$, set $P_r=B_r(x_0)\times [t_0-r^2,t_0]$. Suppose $\varphi$ is a cut-off function supported in $P_{T/2}$ and $\varphi\equiv 1$ in $P_{T/4}$. Multiplying (\ref{eqn:flow1}) by $\varphi$, we obtain
    \begin{equation}\label{eqn:cutoff}
        \partial_t(u\varphi)-\triangle(u\varphi)=(\alpha-1)\frac{(\nabla ^2(u\varphi),\nabla u)\nabla u}{1+\abs{\nabla u}^2} +\mathcal{R}[\varphi,u,\nabla u],
    \end{equation}
    where $\mathcal R$ is a term involving $u,\nabla u$ and derivatives of $\varphi$.
    By Lemma \ref{lem:Linfinity} and the $L^p$ estimate of linear parabolic equations, we have
    \begin{equation*}
        \norm{\varphi u}_{W^{2,p}(P_{T/2})}\leq C(\alpha-1)\norm{\nabla ^2(\varphi u)}_{L^p(P_{T/2})} + C,
    \end{equation*}
    for any $p>0$.
    When $\alpha-1$ is small,
    \begin{equation*}
        \norm{\varphi u}_{W^{2,p}(P_{T/2})}\leq  C.
    \end{equation*}
    We then take one more space derivative of (\ref{eqn:cutoff}) to get
    \begin{equation*}
        \partial_t(\varphi \nabla u)-\triangle (\varphi \nabla u)=(\alpha-1)\frac{(\nabla ^3(\varphi u),\nabla u)\nabla u}{1+\abs{\nabla u}^2}+\mathcal Q[\varphi,u,\nabla u,\nabla ^2u].
    \end{equation*}
    Here $\mathcal Q$ involves $u,\nabla u,\nabla ^2u$ and derivatives of $\varphi$. Using $L^p$ estimate again, we have (when $\alpha-1$ is sufficeintly small)
    \begin{equation*}
        \norm{\varphi u}_{W^{3,p}(P_{T/2})}\leq C,
    \end{equation*}
    for any $p$. Therefore,
    \begin{equation*}
        \norm{\partial_t \nabla (\varphi u)}_{L^{p}(P_{T/2})},\quad \norm{ \nabla ^2(\varphi u)}_{L^{p}(P_{T/2})}\leq C.
    \end{equation*}
    For $p>3$, the Sobolev embedding theorem yields that
    \begin{equation*}
        \norm{\nabla (\varphi u)}_{C^{\beta}(P_{T/2})}\leq C
    \end{equation*}
    for some $\beta>0$. We can now apply the Schauder theory for parabolic equations to obatin higher order estimates, which means that we can extend the solution $u(x,t)$ smoothly to $t=T$. Then the local existence result for smooth initial data implies $T=\infty$.
\end{proof}

\section{The limiting behaviour of the Sacks-Uhlenbeck flow}\label{sec:approx}
For a fixed initial map $u_0$, it follows from  Theorem
\ref{thm:global}  that there is a unique global solution
$u_\alpha$ to (\ref{eqn:flow1})in $M\times [0,\infty )$ for each
$\alpha>1$. In this section, we study the limit of  the solutions
$u_\alpha$ as $\alpha\to 1$. We start with two lemmas. The first
one is another Bochner type formula.

\begin{lem}
  \label{lem:bochner1}
    Let $u(x,t)$ be a classical solution to the scaled $\alpha-$flow (\ref{eqn:flow2}). If $\alpha$ is small, then
    there is a constant $C>0$ such that
    \begin{equation}\label{eqn:bochner1}
      \frac{\partial}{\partial t}e_\alpha(u)-\nabla_i\left( (\delta_{ij}+2(\alpha-1)\frac{u^\beta_i u^\beta_j}{r^2+\abs{\nabla u}^2}) \nabla_j e_\alpha(u) \right)\leq C e_\alpha(u)(1+\abs{\nabla u}^2),
    \end{equation}
    where $e_\alpha(u):=(r^2+\abs{\nabla u}^2)^\alpha$.
\end{lem}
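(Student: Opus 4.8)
The plan is to mimic the computation used for Lemma \ref{lem:bochner}, but now differentiating the weighted energy density $e_\alpha(u)=(r^2+\abs{\nabla u}^2)^\alpha = (r^2+e(u))^\alpha$ instead of $e(u)$ itself. First I would write $e_\alpha(u)=f(e(u))$ with $f(s)=(r^2+s)^\alpha$, so that $\nabla_j e_\alpha(u)=f'(e(u))\nabla_j e(u)=\alpha (r^2+e(u))^{\alpha-1}\nabla_j e(u)$. The strategy is to plug this into the divergence term $\nabla_i\big((\delta_{ij}+2(\alpha-1)\tfrac{u^\beta_i u^\beta_j}{r^2+\abs{\nabla u}^2})\nabla_j e_\alpha(u)\big)$ and expand using the product rule. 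One gets $\alpha (r^2+e)^{\alpha-1}$ times the divergence expression that already appeared in Lemma \ref{lem:bochner}, plus extra terms in which the derivative lands on the scalar factor $\alpha(r^2+e)^{\alpha-1}$; those extra terms produce $\alpha(\alpha-1)(r^2+e)^{\alpha-2}$ times $a_{ij}\nabla_i e\,\nabla_j e$, which is nonnegative (modulo lower-order terms) since $(a_{ij})$ is positive definite when $\alpha-1$ is small, and hence can be discarded when deriving an upper bound for the left-hand side.

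Next I would handle the time derivative the same way: $\tfrac{\partial}{\partial t}e_\alpha(u)=\alpha(r^2+e)^{\alpha-1}\tfrac{\partial}{\partial t}e(u)$. Combining, the left-hand side of \eqref{eqn:bochner1} equals $\alpha(r^2+e)^{\alpha-1}$ times the left-hand side of \eqref{eqn:bochner} (in its scaled form, i.e.\ with $r$ in place of $1$) minus the nonnegative term $\alpha(\alpha-1)(r^2+e)^{\alpha-2} a_{ij}\nabla_i e\nabla_j e$, up to terms that are bounded by $C\,e_\alpha(u)(1+\abs{\nabla u}^2)$ after using $\abs{\nabla e}\le C\abs{\nabla u}\abs{\nabla^2 u}$, Young's inequality to absorb into the good $\abs{\nabla^2 u}^2$ term, and curvature bounds on $M$ and $N$ (the second fundamental form term $A(u)(\nabla u,\nabla u)$ contributes exactly as in Lemma \ref{lem:bochner}). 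Then I invoke Lemma \ref{lem:bochner} directly: its conclusion gives that $\tfrac{\partial}{\partial t}e(u)-\nabla_i(a_{ij}\nabla_j e(u))\le C e(u)(e(u)+1)$, so multiplying through by $\alpha(r^2+e)^{\alpha-1}$ yields a bound of the form $C\alpha(r^2+e)^{\alpha-1}e(u)(e(u)+1)\le C\, e_\alpha(u)(1+\abs{\nabla u}^2)$, since $(r^2+e)^{\alpha-1}e(u)\le (r^2+e)^\alpha$ when $r\le 1$ (and in general up to a harmless constant), and $e(u)+1\le 1+\abs{\nabla u}^2$.

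The main obstacle I anticipate is bookkeeping rather than conceptual: one must verify that every newly generated term — in particular the mixed terms where $\nabla_i$ hits $(r^2+e)^{\alpha-1}$ and the terms coming from the $(\alpha-1)$-correction in $a_{ij}$ acting together with the $\alpha$-weight — is either manifestly of the right sign (and so droppable in an inequality) or genuinely controlled by $C e_\alpha(u)(1+\abs{\nabla u}^2)$ after Cauchy–Schwarz and absorption into $|\nabla^2 u|^2$. It is here that the smallness of $\alpha-1$ is used twice: once to keep $(a_{ij})$ uniformly elliptic, and once to ensure the coefficient of $|\nabla^2 u|^2$ stays positive after the various absorptions, exactly as in the proof of Lemma \ref{lem:bochner}. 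Since $e_\alpha(u)\ge r^{2\alpha}>0$, there is no issue with the weight degenerating, and the computation closes.
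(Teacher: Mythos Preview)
Your argument is correct, and in fact the bookkeeping you worry about is minimal: the chain-rule identity
\[
\partial_t e_\alpha - \nabla_i(a_{ij}\nabla_j e_\alpha)
  = \alpha(r^2+e)^{\alpha-1}\bigl[\partial_t e - \nabla_i(a_{ij}\nabla_j e)\bigr]
    - \alpha(\alpha-1)(r^2+e)^{\alpha-2}\,a_{ij}\,\nabla_i e\,\nabla_j e
\]
is exact, the last term is nonnegative since $\alpha>1$ and $(a_{ij})$ is positive definite, and $(r^2+e)^{\alpha-1}e\le (r^2+e)^\alpha$ holds for all $r>0$; so Lemma~\ref{lem:bochner} applied to the bracket finishes the proof with no further absorption needed.

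The paper takes a somewhat different route. Rather than factoring out $\alpha(r^2+e)^{\alpha-1}$ and invoking Lemma~\ref{lem:bochner}, it expands $\nabla_i(a_{ij}\nabla_j e_\alpha)$ from scratch, regroups the third-order terms so as to reconstruct $u^\gamma_k\nabla_k\bigl(u^\gamma_i(r^2+\abs{\nabla u}^2)^{\alpha-1}\bigr)$, and then substitutes the equation~\eqref{eqn:flow2} directly. This direct computation also exhibits the good Hessian term $\tfrac12\abs{\nabla^2 u}^2(r^2+e)^{\alpha-1}$ explicitly, which your reduction discards (it is hidden inside the inequality of Lemma~\ref{lem:bochner}). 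Your approach is shorter and entirely sufficient for the inequality as stated; the paper's approach is self-contained and keeps the Hessian gain visible.
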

\begin{proof}
  Again, we assume $\alpha-1$ is small whenever necessary.
    \begin{eqnarray*}
        &&\nabla_i\left( (\delta_{ij}+2(\alpha-1)\frac{u^\beta_i u^\beta_j}{r^2+\abs{\nabla u}^2}) \nabla_j e_\alpha(u) \right)\\
        &=& \nabla_i \left( \alpha(r^2+\abs{\nabla u}^2)^{\alpha-1} (\delta_{ij}+2(\alpha-1)\frac{u^\beta_i u^\beta_j}{r^2+\abs{\nabla u}^2}) 2u^\gamma_k u^\gamma_{kj} \right) \\
        &=& 2\alpha \nabla_i \left( (r^2+\abs{\nabla u}^2)^{\alpha-1}u^\gamma_k u^\gamma_{ki}+2(\alpha-1)(r^2+\abs{\nabla u}^2)^{\alpha-2} u^\beta_i u^\beta_j u^\gamma_k u^\gamma_{kj} \right) \\
        &=& 2\alpha \nabla_i \left( (r^2+\abs{\nabla u}^2)^{\alpha-1}u^\gamma_k u^\gamma_{ki}+2(\alpha-1)(r^2+\abs{\nabla u}^2)^{\alpha-2} u^\gamma_i u^\gamma_k u^\beta_j u^\beta_{kj} \right) \\
        &=& 2\alpha \nabla_i \left( (r^2+\abs{\nabla u}^2)^{\alpha-1}u^\gamma_k u^\gamma_{ki}+ u^\gamma_i u^\gamma_k \nabla_k (r^2+\abs{\nabla u}^2)^{\alpha-1} \right) \\
        &=& 2\alpha \nabla_i \left[ u^\gamma_k \nabla_k (u^\gamma_i (r^2+\abs{\nabla u}^2)^{\alpha-1}) \right]\\
        &=& 2\alpha \abs{u^\gamma_{ki}}^2 (r^2+\abs{\nabla u}^2)^{\alpha-1} +4\alpha(\alpha-1)(r^2+\abs{\nabla u}^2)^{\alpha-2}u^\gamma_{ki}u^{\beta}_{jk}u^\gamma_i u^\beta_j \\
        && + 2\alpha u^\gamma_k \nabla_i\nabla_k (u^\gamma_i (r^2+\abs{\nabla u}^2)^{\alpha-1})\\
        &\geq & \alpha \abs{u^\gamma_{ki}}^2(r^2+\abs{\nabla u}^2)^{\alpha-1} +2\alpha u_k^\gamma \nabla_k \nabla_i (u_i^\gamma (r^2+\abs{\nabla u}^2)^{\alpha-1}) -C (r^2+\abs{\nabla u}^2)^{\alpha} \\
        &\geq& \alpha \abs{u^\gamma_{ki}}^2(r^2+\abs{\nabla u}^2)^{\alpha-1} +2\alpha (r^2+\abs{\nabla u}^2)^{\alpha-1} u^\gamma_k\nabla_k \pfrac{u^\gamma}{t}\\
        && 2\alpha(r^2+\abs{\nabla u}^2)^{\alpha-1}u^\gamma_k \nabla_k (A^\gamma(u)(\nabla u,\nabla u)) -C(r^2+\abs{\nabla u}^2)^\alpha \\
        &\geq& \frac{1}{2}\abs{u^\gamma_{ki}}^2 (r^2+\abs{\nabla u}^2)^{\alpha-1}+ \frac{\partial }{\partial t}e_\alpha(u) -Ce_\alpha(u)(1+\abs{\nabla u}^2).
    \end{eqnarray*}
\end{proof}

The second lemma is a parabolic monotonicity formula. Such a
formula for the harmonic map flow was first established by Struwe
in \cite{mono} for the Euclidean case and by Chen and Struwe in
\cite{CS} for general case. Let $u$ be a solution to
(\ref{eqn:flow1}). Suppose $R_M$ is the injectivity radius of $M$.
For a fixed point $p$ in $M$, choose the normal coordinates $\{x_i\}$ and a
cut-off function $\varphi$ supported in $B_{R_M}(p)$ such that
$0\leq \varphi\leq 1$ and $\varphi\equiv 1$ in a neighborhood of
$p$. For $z=(p,t_0)$ with some $t_0\in [0,\infty)$, set
\begin{equation*}
    G_z(x,t)=\frac{1}{\abs{t-t_0}}e^{\frac{-\abs{x}^2}{4\abs{t-t_0}}}
\end{equation*}
and
\begin{equation*}
    \Psi^\alpha_\rho(u,z)=\rho^{2\alpha-2}\int_{T_\rho(t_0)} (1+\abs{\nabla u}^2)^{\alpha} G_z \varphi^2 \sqrt{\abs{g}} dx
    dt,
\end{equation*}
where $\sqrt{\abs{g}}dx$ is the volume element of $M$ and
$T_\rho(t_0)=M\times [t_0-4\rho^2,t_0-\rho^2]$.
\begin{lem}\label{lem:mono}
    Let $u$ be a smooth solution to (\ref{eqn:flow1}) defined on $M\times [0,\infty)$ and $E_\alpha(u(t))\leq E_0$. Then for $z$ as above and for any $0<r<\rho\leq R_M$,
    \begin{equation*}
        \Psi^\alpha_r(u,z)\leq e^{c(\rho-r)}\Psi^\alpha_\rho(u,z)+cE_0 (\rho-r),
    \end{equation*}
    with a uniform constant $c$ depending only on $M$ and $N$.
\end{lem}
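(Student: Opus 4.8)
```latex
\textbf{Proof proposal for Lemma \ref{lem:mono}.}

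\emph{Strategy.} The plan is to imitate the now-standard derivation of the parabolic monotonicity formula for the harmonic map flow (Struwe \cite{mono}, Chen--Struwe \cite{CS}), carrying along the extra factor $\rho^{2\alpha-2}$ and the extra term $(\alpha-1)\nabla\abs{\nabla u}^2\cdot\nabla u/(1+\abs{\nabla u}^2)$ in \eqref{eqn:flow1}, and checking that neither destroys the almost-monotonicity when $\alpha-1$ is small. The key point is that the backward heat kernel $G_z$ satisfies the conjugate heat equation $\partial_t G_z+\triangle G_z=0$ on flat space, and that the weighted energy density $(1+\abs{\nabla u}^2)^\alpha$ is, up to the curvature/cutoff errors and the mild $\alpha$-perturbation, a subsolution of the right parabolic operator by the Bochner inequality \eqref{eqn:bochner1}. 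It is cleanest to work with the rescaled function $v_\rho(y,s)=u(p+\rho y,t_0+\rho^2 s)$, which solves the scaled flow \eqref{eqn:flow2} with parameter $r=\rho$ (this is exactly why the scaled flow was introduced), so that $\Psi^\alpha_\rho(u,z)$ becomes an integral of $(1+\abs{\nabla v_\rho}^2)^\alpha$ against a \emph{fixed} Gaussian on a \emph{fixed} parabolic annulus $\{s\in[-4,-1]\}$. Then I would differentiate $\Psi^\alpha_\rho$ in $\rho$ and show $\frac{d}{d\rho}\Psi^\alpha_\rho\geq -c(\Psi^\alpha_\rho+E_0)$, from which the stated inequality follows by integrating the linear differential inequality from $r$ to $\rho$.

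\emph{Main steps.} First, I would set up the change of variables and record how $\partial_\rho$ of the rescaled integrand is produced: differentiating $v_\rho$ in $\rho$ gives $\partial_\rho v_\rho = y\cdot\nabla v_\rho + 2s\,\partial_s v_\rho$, the infinitesimal generator of the parabolic dilation, and one rewrites $y\cdot\nabla v_\rho$ using the first-order identity $\nabla G\cdot(\text{something})$ coming from $y\,G = -2s\,\nabla_y G$ for the heat kernel. Second, I would plug the scaled flow equation \eqref{eqn:flow2} (with $r=\rho$) to replace $\partial_s v_\rho$ and integrate by parts in the spatial variables, moving derivatives onto $G$ and onto the cutoff $\varphi$; the terms where all derivatives land on $G$ combine, using $\partial_s G + \triangle G=0$, into a manifestly nonnegative square of the form $\int |2s\,\partial_s v_\rho + y\cdot\nabla v_\rho|^2\,(1+\abs{\nabla v_\rho}^2)^{\alpha-1}G$ (the ``monotonicity square''), up to two kinds of error: (a) the cutoff errors $\nabla\varphi$, which are supported away from $p$ where $G_z$ and its derivatives decay like $e^{-c/|s|}$, hence are bounded by $cE_0$; and (b) the metric errors from $\sqrt{|g|}$ and $g^{ij}$ differing from the Euclidean ones in normal coordinates, which produce terms controlled by $c\,\Psi^\alpha_\rho$ plus lower order. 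Third, I would track the two genuinely new contributions relative to the harmonic map flow case: the factor $\rho^{2\alpha-2}$ contributes an extra term $(2\alpha-2)\rho^{2\alpha-3}\int(\cdots)\geq -c(\alpha-1)\Psi^\alpha_\rho/\rho$, absorbable into $-c\Psi^\alpha_\rho$; and the $(\alpha-1)$-term in the flow, when inserted, multiplies the good square by $(1+2(\alpha-1)\frac{v^\beta_i v^\beta_j}{\rho^2+\abs{\nabla v}^2})$ — a matrix uniformly close to the identity for $\alpha-1$ small, so the square stays nonnegative — and otherwise contributes terms bounded by $c(\alpha-1)\Psi^\alpha_\rho$. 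Collecting everything yields $\frac{d}{d\rho}\Psi^\alpha_\rho \geq -c(\Psi^\alpha_\rho + E_0)$ with $c=c(M,N)$ uniform for $\alpha$ near $1$, and Gronwall finishes it.

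\emph{Expected main obstacle.} The routine-but-delicate part is the bookkeeping of error terms: one must verify that every term produced by the integration by parts — curvature terms from the Bochner-type identity \eqref{eqn:bochner1}, the cross terms between $\nabla G$ and $\nabla\varphi$, the $\sqrt{|g|}$ and Christoffel corrections, and the $(\alpha-1)$-perturbation terms — is bounded either by $c\,\Psi^\alpha_\rho$ (because it carries the full weight $(1+\abs{\nabla u}^2)^\alpha G\varphi^2$) or by $cE_0$ (because it is localized where $G_z$ is exponentially small). The genuinely nontrivial conceptual point, which I would flag explicitly, is that the monotone quantity must be the \emph{$\alpha$-weighted} density $(1+\abs{\nabla u}^2)^\alpha$ (not $\abs{\nabla u}^2$), and the correct normalization is $\rho^{2\alpha-2}$ rather than $\rho^0$ — this is forced by demanding that, after rescaling by $v_\rho$, the quantity depends on $\rho$ only through the profile of $v_\rho$ and not through an explicit power of $\rho$ multiplying the Gaussian integral. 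Once the correct quantity and scaling are identified, the proof is a faithful adaptation of \cite{CS}, and the smallness of $\alpha-1$ is used only to keep the coefficient matrix $\delta_{ij}+2(\alpha-1)u^\beta_i u^\beta_j/(r^2+\abs{\nabla u}^2)$ uniformly elliptic and to absorb the extra $(\alpha-1)$-terms.
```
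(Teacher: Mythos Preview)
Your proposal is correct and follows the same route as the paper: rescale to $u_\rho(y,s)=u(\rho y,\rho^2 s)$ so that $\Psi_\rho=\int_{T_1}(\rho^2+\abs{\nabla u_\rho}^2)^\alpha G\,\varphi^2\sqrt{\abs{g}}\,dy\,ds$ on the fixed slab $T_1$, differentiate in $\rho$, insert the scaled flow equation (\ref{eqn:flow2}) and integrate by parts to extract the nonnegative square $\int\frac{\abs{s}}{\rho}(\rho^2+\abs{\nabla u_\rho}^2)^{\alpha-1}\bigl|\partial_s u_\rho-\tfrac{y\cdot\tilde{\nabla} u_\rho}{2\abs{s}}\bigr|^2 G\varphi^2$, then bound the metric and cutoff errors by $c\Psi_\rho+cE_0$. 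Two small corrections: the Bochner inequality (\ref{eqn:bochner1}) plays no role here---the argument is first-order, using only the flow equation and integration by parts; and once you rescale as above, the prefactor $\rho^{2\alpha-2}$ is entirely absorbed into $(\rho^2+\abs{\nabla u_\rho}^2)^\alpha$, so there is no residual $(2\alpha-2)\rho^{-1}\Psi_\rho$ to ``absorb'' (which would not in any case be controllable by $-c\Psi_\rho$)---its contribution shows up instead as the harmless nonnegative $2\rho$ term inside $\partial_\rho(\rho^2+\cdots)^\alpha$ and can simply be dropped.
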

\begin{proof} After a translation, we can assume $t_0=0$ and write $\Psi_\rho(u)$ for $\Psi^\alpha_\rho(u,z)$.
    Letting $u_\rho(y,s)=u(\rho y, \rho^2 s)$, we obtain
    \begin{equation*}
        \Psi_\rho(u)=\int_{T_1} (\rho^2+g^{ij}(\rho y)\partial_{y_i}u_\rho\partial_{y_j}u_\rho)^\alpha G \varphi^2(\rho y)\sqrt{\abs{g}}(\rho y) dy ds.
    \end{equation*}
    In the following, for simplicity we write $g^{ij}$ for $g^{ij}(\rho y)$ and the same convention applies to $\varphi$, $\bar{\nabla} g^{ij}$ and so on. Here we use $\bar{\nabla}$ to denote the gradient in local coordinates; i.e. $(\pfrac{}{x_1},\pfrac{}{x_2})$ and $\tilde{\nabla}$ to denote $(\pfrac{}{y_1},\pfrac{}{y_2})$. Denoting $\frac{\partial }{\partial y_i} u_\rho$ by $u_{\rho,i}$, we have
    \begin{eqnarray*}
        &&\frac{d}{d\rho}\Psi_\rho(u)\\
        &=& \int_{T_1} \alpha (\rho^2+g^{ij}u_{\rho,i} u_{\rho,j})^{\alpha-1} (2\rho+y \cdot \bar{\nabla} g^{ij} u_{\rho,i} u_{\rho,j}+2 g^{ij} u_{\rho,i} \frac{du_{\rho,j}}{d\rho})
         \cdot G \varphi^2 \sqrt{\abs{g}} dy\,ds \\
         && +\int_{T_1} (\rho^2+g^{ij}u_{\rho,i} u_{\rho,j})^\alpha G \frac{d}{d\rho}(\varphi^2 \sqrt{\abs{g}}) dy\,ds \\
         &\geq &-\int_{T_1}2\alpha \frac{\partial}{\partial y_j} \left( \sqrt{\abs{g}} (\rho^2+g^{ij}u_{\rho,i}u_{\rho,j})^{\alpha-1} g^{ij} u_{\rho,i} G \right) \frac{du_\rho}{d\rho} \varphi^2dy\,ds \\
     &&-\int_{T_1} 4\alpha (\rho^2+g^{ij}u_{\rho,i}u_{\rho,j})^{\alpha-1} g^{ij} u_{\rho,i} \partial_{y_j}\varphi G \frac{du_\rho}{d\rho} \varphi \sqrt{\abs{g}} dy\,ds \\
        &&+ \int_{T_1} \alpha (\rho^2+g^{ij}u_{\rho,i}u_{\rho,j})^{\alpha-1} y\cdot \bar{\nabla} g^{ij} u_{\rho,i} u_{\rho,j} G \varphi^2\sqrt{\abs{g}} dy\,ds \\
        &&+ \int_{T_1} (\rho^2+g^{ij}u_{\rho,i}u_{\rho,j})^\alpha G 2\varphi y\cdot \bar{\nabla} \varphi \sqrt{\abs{g}} dy\,ds \\
        &&+ \int_{T_1} (\rho^2+g^{ij}u_{\rho,i}u_{\rho,j})^\alpha G \varphi^2 \frac{y\cdot \bar{\nabla} \abs{g}}{2\abs{g}} \sqrt{\abs{g}} dy\,ds \\
        &=& I+II+III+IV+V.
    \end{eqnarray*}
By definition of $u_\rho$,
\begin{equation*}
    \frac{du_\rho}{d\rho}=y\cdot \bar{\nabla} u +2\rho s\frac{\partial u}{\partial t}=\frac{2s}{ \rho}(\frac{1}{2s}y\cdot \tilde{\nabla} u_\rho+\pfrac{u_\rho}{s}).
\end{equation*}
Since $u_\rho$ satisfies,
\begin{eqnarray*}
    (\rho^2+g^{ij}u_{\rho,i}u_{\rho,j})^{\alpha-1}\pfrac{u_\rho}{s}&=&  \pfrac{}{y_j} \left( \sqrt{\abs{g}} (\rho^2+g^{ij}u_{\rho,i}u_{\rho,j})^{\alpha-1} g^{ij}u_{\rho,i} \right)\\
    &&+(\rho^2+g^{ij}u_{\rho,i}u_{\rho,j})^{\alpha-1}A(u_\rho)(\tilde{\nabla} u_\rho,\tilde{\nabla} u_\rho),
\end{eqnarray*}
we have
\begin{eqnarray*}
    &I&\geq -2\alpha \int_{T_1} (\rho^2+g^{ij}u_{\rho,i} u_{\rho,j})^{\alpha-1} (\pfrac{u_\rho}{s}-\frac{g^{ij}u_{\rho,i}y_j}{2|s|}) \frac{du_\rho}{d\rho} G \varphi^2\sqrt{\abs{g}} dy\,ds \\
    &\geq & \int_{T_1} \frac{4\alpha |s|}{\rho} (\rho^2+g^{ij} u_{\rho,i} u_{\rho,j})^{\alpha-1} \abs{\pfrac{u_\rho}{s}-\frac{y\cdot\tilde{\nabla}u_\rho}{2\abs{s}}}^2G \varphi^2 \sqrt{\abs{g}} dy\,ds \\
    &-&\int_{T_1} \frac{4\alpha\abs{s} }{\rho} (\rho^2+g^{ij} u_{\rho,i}u_{\rho,j})^{\alpha-1} \abs{\frac{g^{ij}u_{\rho,i}y_j}{2\abs{s}}-\frac{y\cdot\tilde{\nabla}u_\rho}{2\abs{s}}}\abs{\pfrac{u_\rho}{s}-\frac{y\cdot\tilde{\nabla}u_\rho}{2\abs{s}}} G \varphi^2 \sqrt{\abs{g}} dy\,ds \\
    &\geq & \int_{T_1} \frac{2\alpha |s|}{\rho} (\rho^2+g^{ij} u_{\rho,i} u_{\rho,j})^{\alpha-1} \abs{\pfrac{u_\rho}{s}-\frac{y\cdot\tilde{\nabla}u_\rho}{2\abs{s}}}^2G \varphi^2 \sqrt{\abs{g}} dy\,ds \\
    &&- C\int_{T_1} \frac{1}{\rho} (\rho^2+g^{ij} u_{\rho,i}u_{\rho,j})^{\alpha-1} \abs{\frac{g^{ij}u_{\rho,i}y_j}{2}-\frac{y\cdot\tilde{\nabla}u_\rho}{2}}^2 G \varphi^2 \sqrt{\abs{g}} dy\,ds.
\end{eqnarray*}
Since $\{x_i\}$ are normal coordinates, we have $\abs{g^{ij}(\rho y)-\delta^{ij}}\leq C \rho\abs{y}$. The absolute value of the last term is not bigger than
\begin{eqnarray*}
    &&C\int_{T_1} \rho^{-1}(\rho^2+g^{ij}u_{\rho,i}u_{\rho,j})^{\alpha-1} \abs{g^{ij}(\rho y)-\delta^{ij}}^2 \abs{y}^2 \abs{u_{\rho,i}}^2 G \varphi^2 \sqrt{\abs{g}} dy\,ds \\
    &\leq& C\int_{T_1}\rho^{-1}(\rho^2+g^{ij}u_{\rho,i}u_{\rho,j})^{\alpha-1} \rho^2 \abs{y}^4 \abs{u_{\rho,i}}^2 G \varphi^2 \sqrt{\abs{g}} dy\,ds \\
    &\leq& C\int_{T_1} (\rho^2+g^{ij}u_{\rho,i}u_{\rho,j})^\alpha (G+C)\varphi^2 \sqrt{\abs{g}} dy\,ds \\
    &\leq& C\Psi_\rho(u)+C\max_{-4\leq s\leq -1} \int_{M} \rho^{2\alpha-2} (1+\abs{\nabla u}^2)^\alpha \varphi^2 \sqrt{\abs{g}}dx\\
    &\leq& C\Psi_\rho(u)+CE_0,
\end{eqnarray*}
if $\rho<1$. Here we use the fact that for $s\in [-4,-1]$,
$\abs{y}^4 G\leq G+C$. In summary, we have
\begin{eqnarray*}
    I&\geq & \int_{T_1} \frac{2\alpha |s|}{\rho} (\rho^2+g^{ij} u_{\rho,i} u_{\rho,j})^{\alpha-1} \abs{\pfrac{u_\rho}{s}-\frac{y\cdot\tilde{\nabla}u_\rho}{2\abs{s}}}^2G \varphi^2 \sqrt{\abs{g}} dy\,ds \\
    &&- C\Psi_\rho(u)-CE_0.
\end{eqnarray*}
For the remaining terms, we have for a sufficiently small
$\epsilon$
\begin{eqnarray*}
    \abs{II}&\leq& \epsilon\int_{T_1}(\rho^2+g^{ij}u_{\rho,i}u_{\rho,j})^{\alpha-1}\frac{4s^2}{\rho} \abs{\pfrac{u_\rho}{s}-\frac{y\cdot \tilde{\nabla}u_\rho}{2\abs{s}}}^2 G \varphi^2 \sqrt{\abs{g}} dy\,ds  \\
    && + C(\epsilon)\int_{T_1}\rho^{-1} (\rho^2+g^{ij}u_{\rho,i}u_{\rho,j})^{\alpha-1} \left( g^{ij} u_{\rho,i} \partial_{y_j}\varphi \right)^2  \sqrt{\abs{g}} dy\,ds \\
    &\leq& \epsilon\int_{T_1}(\rho^2+g^{ij}u_{\rho,i}u_{\rho,j})^{\alpha-1}\frac{4s^2}{\rho} \abs{\pfrac{u_\rho}{s}-\frac{y\cdot \tilde{\nabla}u_\rho}{2\abs{s}}}^2 G \varphi^2 \sqrt{\abs{g}} dy\,ds  \\
    && + C(\epsilon)\int_{T_1} \rho \sum_k \abs{\partial_{x_k}\varphi}^2 (\rho^2+g^{ij}u_{\rho,i}u_{\rho,j})^{\alpha} \sqrt{\abs{g}} dy\,ds\\
    &\leq& \frac{1}{2}I +C \Psi_\rho(u) +CE_0 +C\max_{-4\leq s\leq -1} \int_M \rho^{1+2\alpha-2} (1+\abs{\nabla u}^2)^\alpha \sqrt{\abs{g}} dx \\
    &\leq& \frac{1}{2}I +C \Psi_\rho(u) +CE_0.
\end{eqnarray*}
Here we use again $\rho<1$ and $G$ is bounded on $T_1$. There is some constant $C$ depending only on the geometry of $M$ such that $\bar{\nabla} g^{ij}$ and $\bar{\nabla} \abs{g}$ are bounded by $C$. Therefore
we have
\begin{equation*}
    \abs{III} +\abs{V}\leq C\Psi_\rho(u) +CE_0
\end{equation*}
and
\begin{eqnarray*}
    \abs{IV}&\leq& \frac{1}{2}\Psi_\rho(u) + C\int_{T_1} (\rho^2+g^{ij}u_{\rho,i}u_{\rho,j})^{\alpha} \abs{y\cdot \bar{\nabla} \varphi}^2 G \sqrt{\abs{g}} dy\,ds \\
    &\leq& \frac{1}{2} \Psi_\rho(u) +CE_0,
\end{eqnarray*}
where we used the fact that $\abs{y}^2 G$ is bounded on $T_1$. In
conclusion, we show that for $\rho<1$,
\begin{equation*}
    \frac{d}{d\rho}\Psi_\rho(u) \geq C \Psi_\rho(u) +CE_0.
\end{equation*}
The lemma follows from integrating this differential inequality.
\end{proof}

With these preparation, we now prove Theorem \ref{thm:routine}.
\begin{proof}
    (i) Let $z_i$ be a sequence of points in $\Sigma$ which converges to $\tilde{z}=(\tilde{x},\tilde{t})$. For any $0<R<R_M$, by the definition of $\Sigma$, we have
    \begin{equation*}
        \liminf_{\alpha\to 1} R^{2\alpha-2} \int_{T_R(z_i)} \varphi^2 e_\alpha(u_\alpha) G_{z_i} \sqrt{\abs{g}} dxdt \geq \varepsilon_0.
    \end{equation*}
    Since $\int_{M}e_\alpha(u_\alpha) \sqrt{\abs{g}} dx$ is uniformly bounded  by $E_0$ and $G_{z_i}$ converges to $G_{\tilde{z}}$ uniformly
    away from $\tilde{z}$, we can take $i\to \infty$ in the above inequality and switch the order of limits to get
    \begin{equation*}
        \liminf_{\alpha\to 1} R^{2\alpha-2} \int_{T_R(\tilde{z})} \varphi^2 e_\alpha(u_\alpha) G_{\tilde{z}} \sqrt{\abs{g}} dxdt \geq \varepsilon_0.
    \end{equation*}
    Since $R$ is arbitrary, we know $\tilde{z}\in \Sigma$, which shows that $\Sigma$ is closed.

    For any $R>0$, since $M\times [t^1,t^2]$ is compact, we can find a finite cover of $\Sigma$; i.e., $z_i=(x_i,t_i)\in M\times [t^1,t^2]$  such that
    \begin{equation*}
        \Sigma\cap (M\times [t^1,t^2] ) \subset \bigcup_{i=1}^l Q_{5R}(z_i)
    \end{equation*}
    and $\{Q_{R}(z_i)\}$ are disjoint. Here $Q_r(z)$ is defined to be $B_r(x)\times (t-r^2,t+r^2)$.
    Let $\bar{z}_i=(x_i,t_i+R^2)$.
    For some constant $\delta\in (0,1/4)$ to be determined later, we have (by definition of $\Sigma$ and for $\alpha-1$ sufficiently small, in fact, for each $i$, $\alpha-1$ is required to be smaller)
    \begin{eqnarray}\label{eqn:small1}
        \frac{1}{2}\varepsilon_0 &\leq&  (\delta R)^{2\alpha-2} \int_{t_i-4\delta^2R^2}^{t_i- \delta^2 R^2} \int_M \varphi^2 e_\alpha(u_\alpha) G_{z_i} \sqrt{\abs{g}} dxdt \\ \nonumber
        &\leq& C(\delta) R^{2\alpha-4} \int_{Q_{R}(z_i)} e_\alpha(u_\alpha) \sqrt{\abs{g}}dxdt \\ \nonumber
        && +C \delta^{-2}e^{-\frac{1}{16 \delta^2}} (\delta R)^{2\alpha-2} \int_{t_i-4\delta^2 R^2}^{t_i-\delta^2R^2} \int_M \varphi^2 e_\alpha(u_\alpha) G_{\bar{z}_i} \sqrt{\abs{g}}dxdt
    \end{eqnarray}
    for $1\leq i\leq l$. Here in the last step above, we use the fact that
    \begin{equation*}
        G_{z_i}\leq \delta^{-2} e^{-\frac{1}{16 \delta^2}} G_{\bar{z}_i} \mbox{ in } B_{R_M}(x_i)\times [t_i-4\delta^2 R^2,t_i-\delta^2 R^2] \setminus Q_{R}(z_i).
    \end{equation*}
    By Lemma \ref{lem:mono}, the second term in the right hand side of (\ref{eqn:small1}) can be estimated as
    \begin{eqnarray*}
        C\delta^{-2} e^{-\frac{1}{16\delta^2}} \Psi^\alpha_{\delta R}(u_\alpha,\bar{z}_i) &\leq& C\delta^{-2} e^{-\frac{1}{16 \delta^2}} (\Psi^\alpha_{\sqrt{t^1}/4}(u_\alpha,\bar{z}_i) +E_0) \\
        &\leq& C(E_0,t^1) \delta^{-2} e^{-\frac{1}{16 \delta^2}}.
    \end{eqnarray*}
    Hence, we can choose $\delta$ small to make it smaller than $\varepsilon_0/4$. Therefore,
    \begin{equation*}
        R^2\leq R^{4-2\alpha}\leq C\int_{Q_{R}} e_\alpha(u_\alpha) \sqrt{\abs{g}} dxdt.
    \end{equation*}
    Since $Q_{R}(z_i)$ are disjoint, we have
    \begin{equation*}
        \sum_{i=1}^l (5R)^2 \leq C\sum_{i=1}^l \int_{Q_{R}(z_i)} e_\alpha(u_\alpha) \sqrt{\abs{g}} dxdt \leq CE_0.
    \end{equation*}
    By sending $R$ to zero, we see that $\mathcal P^2 (\Sigma\cap (M\times [t^1,t^2]))$ is finite.
    One can prove that $\Sigma_t$ is a finite set in a similar way. If it is an infinite set, then we can find $l$ distinct points $x_1,\cdots,x_l$ in $\Sigma_t$ and $R>0$ such that $B_R(x_i)$ are disjoint. For the same $\delta>0$ as before, we may repeat the above argument for $(x_i,t)$ to see
    \begin{equation*}
      l (5R^2)\leq C\sum_{i=1}^l \int_{Q_R( (x_i,t))} e_\alpha (u_\alpha) \sqrt{\abs{g}} dxdt\leq CE_0 R^2.
    \end{equation*}
    This gives an upper bound on the number of points in $\Sigma_t$.

    (ii) The proof in this part is similar to the proof of Theorem \ref{thm:global}.  When $\alpha$ is fixed, we know that
    there is no concentration for the Dirichlet energy, so we can use the small energy condition with (\ref{eqn:bochner}) to obtain a $C^0$-bound of the gradient
    as in Lemma \ref{lem:Linfinity}.   However,  as $\alpha$ goes to $1$,  the difference is that there may be a concentration point of the Dirichlet
    energy, but
   for $z\notin \Sigma$, we can find $1/2>R>0$ such that
    \begin{equation*}
      \liminf_{\alpha\to 1} \Psi_R^\alpha (u_\alpha,z) <\varepsilon_0.
    \end{equation*}

    {\bf Claim:} There is an $\varepsilon_0>0$ which we use in the definition of $\Sigma$ and some $\delta>0$ and some constant $C$ depending on $N, R, E_0$ and $\varepsilon_0$ such that
    \begin{equation*}
        \norm{\nabla u_{\alpha_k}}_{C^0(P_{\delta R}(z))}\leq C.
    \end{equation*}

    Assume $z=(0,0)$ and set $r_1=\delta R$ for some $\delta<1/2$ to be determined. Suppose $r,\sigma\in (0,r_1)$, $r+\sigma<r_1$ and $z_0=(x_0,t_0)\in P_r$. We need to show
    \begin{equation*}
        \sigma^{2\alpha_k-4}\int_{P_\sigma(z_0)} e_{\alpha_k}(u_{\alpha_k}) dvdt \leq C\varepsilon_0
    \end{equation*}
    if $R$ and $\delta$ small. To see this, Lemma \ref{lem:mono} implies
    \begin{eqnarray}\label{eqn:st1}
        &&\sigma^{2\alpha_k-4}\int_{P_\sigma(z_0)} e_{\alpha_k}(u_{\alpha_k}) dvdt \\ \nonumber
        &\leq& c \sigma^{2\alpha_k-2} \int_{P_\sigma(z_0)} e_{\alpha_k}(u_{\alpha_k}) G_{(x_0,t_0+2\sigma^2)} dv dt \\ \nonumber
        &\leq& c \sigma^{2\alpha_k-2} \int_{T_\sigma(t_0+2\sigma^2)} \varphi^2 e_{\alpha_k}(u_{\alpha_k}) G_{(x_0,t_0+2\sigma^2)} dv dt \\\nonumber
        &\leq&  ce^{cR-\sigma} \Psi^{\alpha_k}_{R}(u_{\alpha_k},(x_0,t_0+2\sigma^2)) +cE_0(R-\sigma) .
    \end{eqnarray}
    We can choose $R$ to be small so that the last term is no larger than $\varepsilon_0$.
    \begin{eqnarray}\label{eqn:st2}
        &&\Psi^{\alpha_k}_R(u_{\alpha_k},(x_0,t_0+2\sigma^2))\\ \nonumber
        &\leq& cR^{2\alpha_{k}-2}\int_{t_0+2\sigma^2-4R^2}^{t_0+2\sigma^2-R^2} \int_M \varphi^2 e_{\alpha_k}(u_{\alpha_k}) G_{(x_0,t_0+2\sigma^2)} dvdt \\\nonumber
        &\leq& c R^{2\alpha_k-2}\int_{-4R^2}^{-R^2}\int_M \varphi^2 e_{\alpha_k}(u_{\alpha_k}) G_{(x_0,t_0+2\sigma^2)} dvdt +(\delta R)^2 E_0 R^{-2} \\\nonumber
        &\leq& c \Psi^{\alpha_k}_R (u_{\alpha_k},(x_0,t_0)) + \varepsilon E_0 +\delta E_0.
    \end{eqnarray}
    Here in the last line we used the fact that on $T_R$, for any $\varepsilon>0$, we can find $\delta$ so small such that
    \begin{equation*}
        G_{(x_0,t_0+2\sigma^2)}\leq cG(x,t)+\varepsilon R^{-2},
    \end{equation*}
    which is (2.18) of \cite{CS}.  Hence, by choosing $\varepsilon,\delta$ properly, we have
    \begin{equation}
        \sigma^{2\alpha_k-4} \int_{P_\sigma (x_0,t_0)} e_{\alpha_k}(u_{\alpha_k}) dvdt \leq C\varepsilon_0
        \label{eqn:formoser}
    \end{equation}
    for any $(x_0,t_0)\in P_r$ and $r+\sigma<r_1=\delta R$.

    We can find $\rho\in [0,r_1]$ such that
    \begin{equation*}
        (r_1-\rho)^{2\alpha_k} \sup_{P_\rho} e_{\alpha_k}(u_{\alpha_k})=\max_{\sigma\in [0,r_1]} \left\{ (r_1-\sigma)^{2\alpha_k} \sup_{P_\sigma}e_{\alpha_k}(u_{\alpha_k})\right\}.
    \end{equation*}
    Let $(x_1,t_1)$ be the point in $P_\rho$ such that
    \begin{equation*}
        e_0\stackrel{\triangle}{=}e_{\alpha_k}(u_{\alpha_k})(x_1,t_1)=\sup_{P_\rho} e_{\alpha_k}(u_{\alpha_k}).
    \end{equation*}
    If $e_0(r_1-\rho)^{2\alpha_k}\leq 4$, then
    \begin{equation*}
        (\frac{r_1}{2})^{2\alpha_k} \sup_{P_{r_1/2}} e_{\alpha_k}(u_{\alpha_k}) \leq e_0(r_1-\rho)^{2\alpha_k} \leq 4,
    \end{equation*}
    which means the claim is true. Hence, we may assume that $e_0(r_1-\rho)^{2\alpha_k}>4$. Set $\lambda=e_0^{\frac{1}{2\alpha_k}}$ and
    \begin{equation*}
        v(x,t)=u_{\alpha_k}(x_1+\frac{x}{\lambda},t_1+\frac{t}{\lambda^2})
    \end{equation*}
    for $(x,t)\in B_1(0)\times [-1,0]$.
    Then $v$ satisfies a scaled $\alpha-$flow equation
    \begin{eqnarray*}
        (\lambda^{-2}+\abs{\nabla v}^2)^{\alpha_k-1}\partial_t v&=& \mbox{div} \left( (\lambda^{-2}+\abs{\nabla v}^2)^{\alpha_k-1} \nabla v \right)\\
        &&+(\lambda^{-2}+\abs{\nabla v}^2)^{\alpha_k-1}A(v)(\nabla v,\nabla v).
      \end{eqnarray*}
    If we write $e(v)$ for $(\lambda^{-2}+\abs{\nabla v}^2)^{\alpha_k}$, then
    we have $e(v)(0,0)=1$ and
    \begin{eqnarray*}\label{eqn:four1}
        \sup_{B_1(0)\times [-1,0]} e(v) &\leq& e_0^{-1}\sup_{P_{\frac{r_1+(2^{1/\alpha_k}-1)\rho}{2^{1/\alpha_k}}}} e_{\alpha}(u_\alpha) \\ \nonumber
        &\leq& e_0^{-1} \frac{e_0 (r_1-\rho)^{2\alpha_k}}{(r_1-\frac{r_1+(2^{1/\alpha_k}-1)\rho}{2^{1/\alpha_k}})^{2\alpha_k}} \\ \nonumber
        &=& \left( \frac{2^{1/\alpha_k}}{2^{1/\alpha_k}-1} \right)^{2\alpha_k} \leq 5
    \end{eqnarray*}
    if $k$ is large.
    By Lemma \ref{lem:bochner1} and (\ref{eqn:four1}),
    \begin{equation*}
        \pfrac{}{t}e(v)-\nabla_i (a_{ij}(v) \nabla_j e(v)) \leq Ce(v),
    \end{equation*}
    where $(a_{ij})$ is a symmetric matrix whose eigenvalues satisfy uniform elliptic condition. By a standard Moser iteration again, we have
    \begin{eqnarray*}
        1&=&e(v)(0,0)\leq C\int_{B_1(0)\times [-1,0]} e(v) \\
        &\leq& C \lambda^{4-2\alpha_k} \int_{t-\lambda^{-2}}^t \int_{B_{1/\lambda}(x)} (1+\abs{du_{\alpha_k}})^{\alpha_k} dv dt \\
        &\leq& C\varepsilon_0.
    \end{eqnarray*}
    Here in the last step, we used (\ref{eqn:formoser}).
    This is a contradiction if we choose $\varepsilon_0$ small. This concludes the proof of the claim.

    We can establish higher order estimates for $u_{\alpha_k}$ in a smaller neighborhood as in the proof of Theorem \ref{thm:global} so that (ii) follows.

    (iii) Applying the uniform bound of $\nabla u_{\alpha}$ to (1.1),  the same proof of Theorem 1.1 yields that $u_{\alpha}$  converges smoothly to $u$ outside $\Sigma$.
    Then $u$ satisfies
    the harmonic map flow equation on $M\times [0,\infty)\setminus \Sigma$. The rest of the proof is exactly the same as in Theorem 7.2.3 of \cite{book}.
\end{proof}

Let $\mathcal S(u)$ be the singular set of the weak limit $u$;
i.e. for any $(x,t)\notin \mathcal S(u)$, there is $r>0$ such that
$u|_{B_r(x)\times (t-r^2,t+r^2)}$ is smooth. It follows from
Theorem \ref{thm:routine} that
\begin{equation*}
    \mathcal S(u) \subset \Sigma.
\end{equation*}
In fact, thanks to the local energy inequality (\ref{eqn:local}), we can say more about the position of $\mathcal S(u)$ in $\Sigma$. A point $z\in \Sigma$ is said to be a {\it rightmost} point of $\Sigma$ if for some $r>0$ we have
\begin{equation*}
    \Sigma \cap (B_r(x)\times (t-r^2,t)) =\emptyset.
\end{equation*}

\begin{lem}\label{thm:right}
    Every rightmost point of $\Sigma$ lies in $\mathcal S(u)$.
\end{lem}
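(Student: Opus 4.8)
The plan is to argue by contradiction. Suppose $z=(\bar x,t)$ is a rightmost point of $\Sigma$ but $z\notin\mathcal S(u)$. Then $u$ is smooth on a parabolic neighbourhood $W=B_{\rho_0}(\bar x)\times(t-\rho_0^2,t+\rho_0^2)$, so $C_0:=\sup_W\abs{\nabla u}<\infty$; after shrinking $\rho_0$ we may assume $\rho_0\le r$, where $r$ is the radius in the definition of ``rightmost'', and $t-\rho_0^2>0$. Since $z$ is rightmost, $\Sigma\cap\bigl(B_{\rho_0}(\bar x)\times(t-\rho_0^2,t)\bigr)=\emptyset$, so by Theorem \ref{thm:routine}(i) the maps $u_\alpha$ converge to $u$ in $C^\infty_{\mathrm{loc}}$ on the open set $B_{\rho_0}(\bar x)\times(t-\rho_0^2,t)$. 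The point I would exploit is that the time slab $[t-4R^2,t-R^2]$ appearing in $\Psi^\alpha_R(u_\alpha,z)$ lies strictly below $t$, so for each fixed $R<\rho_0/2$ the set $\overline{B_{\rho_0/2}(\bar x)}\times[t-4R^2,t-R^2]$ is a compact subset of that concentration-free region; hence there is $\alpha_1(R)>1$ with $\abs{\nabla u_\alpha}\le 2C_0$ on it for all $1<\alpha<\alpha_1(R)$.

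Next I would estimate $\Psi^\alpha_R(u_\alpha,z)$ for small $R$ by splitting the spatial integral over $B_{\rho_0/2}(\bar x)$ and its complement. On $B_{\rho_0/2}(\bar x)$ we have $(1+\abs{\nabla u_\alpha}^2)^\alpha\le 2(1+4C_0^2)$ for $\alpha$ near $1$, while $\int_{B_{\rho_0/2}(\bar x)}G_z\,\sqrt{\abs g}\,dx\le 5\pi$ because the Gaussian $G_z$ has total mass $4\pi$ at each time, up to an $O(\rho_0^2)$ metric factor; multiplying by the time length $3R^2$ and the prefactor $R^{2\alpha-2}$ gives a contribution $\le C(1+C_0^2)R^{2\alpha}$. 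On the complement $\varphi$ is cut off and, since there $\abs x\ge\rho_0/2$ while $t-s\le 4R^2$, one has $G_z\le R^{-2}e^{-c\rho_0^2/R^2}$, whereas $\int_M(1+\abs{\nabla u_\alpha}^2)^\alpha\sqrt{\abs g}\,dx\le E_0$ by Lemma \ref{lem:decrease}; this contributes $\le CE_0R^{2\alpha-2}e^{-c\rho_0^2/R^2}$. Both bounds tend to $0$ as $R\to0$, uniformly for $\alpha\in(1,2]$, so fixing $R$ small enough (depending only on $C_0$, $E_0$, $\rho_0$ and $\varepsilon_0$) yields $\limsup_{\alpha\to1}\Psi^\alpha_R(u_\alpha,z)<\varepsilon_0$, hence $\liminf_{\alpha\to1}\Psi^\alpha_R(u_\alpha,z)<\varepsilon_0$. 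By the definition of $\Sigma$ this forces $z\notin\Sigma$, a contradiction.

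The one delicate point is the uniformity asserted in the first paragraph: one must be sure the bound $\abs{\nabla u_\alpha}\le 2C_0$ near $\bar x$ persists for times arbitrarily close to (yet still below) $t$, \emph{without} assuming any control at time $t$ itself. This is legitimate because for each fixed $R$ the relevant slab sits compactly inside the concentration-free region, so the threshold $\alpha_1(R)$ is a genuine constant for the fixed $R$ we eventually choose, even though it may degenerate as $R\to0$. An alternative, in the spirit of the remark preceding the lemma, bypasses the smooth convergence near $t$ and uses instead the local energy inequality \eqref{eqn:local}: starting at a time $t_1<t$ at which $\int_{B_{2R}(\bar x)}(1+\abs{\nabla u_\alpha}^2)^\alpha\,dv=O\bigl((1+C_0^2)R^2\bigr)$ (again from the rightmost property together with smoothness of $u$ at $z$) and propagating forward, one makes $\int_{B_R(\bar x)}(1+\abs{\nabla u_\alpha}^2)^\alpha\,dv$ as small as desired on a time interval containing $t$ by taking $R$ and $t-t_1$ small, which once more forces $\Psi^\alpha_\rho(u_\alpha,z)<\varepsilon_0$ for small $\rho$. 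Either way the geometric content is the same: at a rightmost point the weak limit already carries whatever energy would concentrate, so if $u$ were smooth there, the $\alpha$-energy density near $\bar x$ just before time $t$ would be bounded and no concentration could occur.
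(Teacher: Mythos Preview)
Your argument is correct and follows the same strategy as the paper's proof: argue by contradiction, use the rightmost hypothesis together with smooth convergence of $u_\alpha$ on the concentration-free slab $[t-4R^2,t-R^2]$ to bound $\abs{\nabla u_\alpha}$ near $\bar x$, and then split $\Psi^\alpha_R(u_\alpha,z)$ into a near piece (controlled by the gradient bound) and a far piece (controlled by Gaussian decay and the global energy bound) to force $\Psi^\alpha_R<\varepsilon_0$. The only cosmetic difference is that the paper uses two scales---a spatial radius $R$ chosen so that $|B_R|(1+4C^2)^2\le\varepsilon_0/8$, then a $\Psi$-scale $\rho$ chosen small relative to $R$ for the Gaussian tail---whereas you keep the spatial split radius fixed at $\rho_0/2$ and drive a single scale $R\to0$, using the total Gaussian mass $4\pi$ for the near estimate; both arrive at the same contradiction.
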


\begin{proof}
    Let $z=(x,t)$ be a rightmost point of $\Sigma$. Assume that $z\notin \mathcal S(u)$. By definition, there exist $r_1>0$ and $r_2>0$ such that
    \begin{equation}
        \Sigma\cap (B_{r_1}(x)\times (t-r_1^2,t))=\emptyset
        \label{eqn:r1}
    \end{equation}
    and
    \begin{equation}
        \norm{\nabla u}_{C^0(B_{r_2}(x)\times (t-r_2,t+r_2))}\leq C.
        \label{eqn:r2}
    \end{equation}
    We may assume that $r_1>r_2$. There exists an $R\in (0,r_2)$ depending only on the $C$ in (\ref{eqn:r2}) and $\varepsilon_0$ such that
    \begin{equation}
        |B_R(x)| (1+4C^2)^2 \leq \varepsilon_0/8.
        \label{eqn:R}
    \end{equation}
    Given this $R$, we then find $\rho>0$ depending only on $R$, $\varepsilon_0$ and overall energy upperbound $E_0$ such that
    \begin{equation}
        \int_{M\setminus B_R(x)} (1+\abs{\nabla u_\alpha}^2)^\alpha e^{-\frac{\abs{x}^2}{4s}} \varphi^2 dv \leq \varepsilon_0/8
        \label{eqn:rho}
    \end{equation}
    for all $s\in (\rho^2,4\rho^2)$. Since
    \begin{equation*}
        (B_R(x)\times [t-4\rho^2,t-\rho^2] )\cap \Sigma=\emptyset,
    \end{equation*}
    by (i) of Theorem \ref{thm:routine}, we have for $k$ sufficiently large
    \begin{equation}
        \abs{\nabla u_{\alpha_k}}(y,s)\leq 2C
        \label{eqn:C2}
    \end{equation}
    for $y\in B_R(x)$ and $s\in [t-4\rho^2,t-\rho^2]$.
    Now we can estimate for $k$ large
    \begin{eqnarray*}
        \Psi^{\alpha_k}_\rho(u,(x,t)) &=& \rho^{2\alpha_k-2} \int_{T_\rho(t)} (1+\abs{\nabla u_\alpha}^2)^\alpha G_z \varphi^2 dv dt \\
        &\leq & 3\max_{s\in [t-4\rho^2,t-\rho^2]} \int_M (1+\abs{\nabla u_{\alpha_k}}^2)^{\alpha_k} e^{-\frac{\abs{x}^2}{4(t-s)}} \varphi^2 dv  \\
        &\leq &\frac 7 8 \varepsilon_0,
    \end{eqnarray*}
    where we used (\ref{eqn:R}), (\ref{eqn:rho}) and (\ref{eqn:C2}). This is a contradiction to the fact that $(x,t)\in \Sigma$.
\end{proof}

We now prove Theorem \ref{thm:compare}.
\begin{proof}
    Since the initial value is smooth, by uniqueness, $u(x,t)=\hat{u}(x,t)$ for $t<T$. (i) follows immediately from Lemma \ref{thm:right}.

    For (ii), since $u_\alpha$ converges smoothly to $u$ away from $\Sigma_T$, we have $\Sigma_T=\{p_i\}_{i=1}^k$. For $x\in M\setminus \Sigma_T$, by Theorem \ref{thm:routine}, we know $u$ is smooth on $B_r(x)\times [T-r^2,T+r^2]$ for some small $r>0$. This implies that $\nabla \hat{u}$ is bounded on $B_r(x)\times [T-r^2,T)$, which means that $x$ is not one of the $\hat{p}_i$'s. On the other hand, if $x$ is not a blow-up point for $\hat{u}$ at time $T$, then there exists $r>0$ such that
    \begin{equation}
        \abs{\nabla u}(x,t)\leq C
        \label{eqn:small}
    \end{equation}
    for $(x,t)$ in $B_r(x)\times [T-r^2,T)$.
    The same proof as in Lemma \ref{thm:right} shows that
    $x\notin \Sigma_T$. Hence, $\{p_i\}$ and $\{\hat{p}_i\}$ are the same set of points.
    Assume $k=1$. It remains to show $\hat{m}\leq m$. For $R>0$ and $\delta>0$, local energy inequality gives
    \begin{equation}\label{eqn:le}
        \int_{B_R(p)}e_\alpha(u_\alpha)(T) dv \leq \int_{B_{2R}} e_\alpha(u_\alpha) (T-\delta)  dv +CE_0\frac{\delta}{R^2}.
    \end{equation}
    For any $\varepsilon>0$, choose $R$ so that $\int_{B_{2R}(p)} e(\hat{u})(T)\leq \varepsilon/6$ and $\mbox{Vol}(B_{2R}(p))\leq \varepsilon/6$. Let $\delta>0$ be a small number such that $CE_0\frac{\delta}{R^2}\leq \varepsilon/6$. Taking $\alpha\to 1$ in (\ref{eqn:le}), we have
    \begin{equation*}
        m\leq \int_{B_{2R}(p)} e(\hat{u})(T-\delta) dv +\varepsilon/3.
    \end{equation*}
    Finally, letting $\delta$ go to zero, we obtain
    \begin{equation*}
        m\leq \int_{B_{2R}(p)}e(\hat{u})(T) dv +\hat{m} +\varepsilon/3\leq \hat{m} +\varepsilon/2.
    \end{equation*}
    The theorem follows by the arbitrariness of $\varepsilon$.
\end{proof}

With Theorem \ref{thm:routine} and Theorem \ref{thm:compare}, it
is natural to ask whether $u$ in Theorem \ref{thm:routine} is the
same as the Struwe  solution after the first blow-up time. Given
the nonuniqueness results \cite{unique1} and \cite{unique2}, one
can not exclude the possibility that $u$ is different from the
Struwe  solution. On this issue, we would like to make the
following remark,
\begin{rem}
    Consider maps from round $S^2$ to itself. According to \cite{CDY}, there exists an initial value map $u_0$ of degree three such that Struwe's solution blows up at some finite time $T$ and the homotopy class of the solution is changed for $t>T$. Hence the $\alpha-$flow solutions $u_\alpha$ with the same initial value can not converge strongly to Struwe's solution after $T$. A natural question is what we can say about $\Sigma$. See \cite{LT}.
\end{rem}

\section{An application}\label{sec:lin}
In this section, we apply our results about the $\alpha-$flow to the study of minimizing sequence of Dirichlet energy in a homotopy class. The following lemma is a variant of the main estimate in \cite{SU}.
    \begin{lem}\label{lem:regularity}
        Let $w$ be a map from $B_1$ to $N$ satisfying the following scaled equation for some $R>0$:
    \begin{equation}\label{eqn:su}
        \triangle w+ (\alpha-1)\frac{\nabla \abs{\nabla w}^2 \cdot \nabla w}{R^2+\abs{\nabla w}^2}+A(\nabla w,\nabla w)=h,
    \end{equation}
    where $h\in L^2$.
    There exists $\varepsilon_0>0$ such that if $E(w)<\varepsilon_0$ and $\alpha-1$ is sufficiently small then
    \begin{equation*}
        \norm{w-\bar{w}}_{W^{2,2}(B_{1/2})}\leq C( \norm{\nabla w}_{L^2(B_1)}+\norm{h}_{L^2(B_1)}).
    \end{equation*}
    Here $\bar{w}$ is the mean value of $w$ on $B_1$.
    \end{lem}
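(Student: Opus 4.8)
The plan is to run the standard $\varepsilon$-regularity argument (as in \cite{SU} and \cite{Sch}) for the elliptic system, treating the Sacks--Uhlenbeck perturbation term and the quadratic gradient term as small perturbations that can be absorbed once $\alpha-1$ and $\varepsilon_0$ are taken small, and then closing the estimate with a standard iteration lemma on concentric balls. We may assume $w$ is smooth, since the lemma is applied to rescalings of the smooth flow solutions, so that all quantities below make sense. The key pointwise observation is that, uniformly in $R>0$,
\[
\abs{\frac{\nabla\abs{\nabla w}^2\cdot\nabla w}{R^2+\abs{\nabla w}^2}}\le \frac{2\abs{\nabla w}^2\,\abs{\nabla^2 w}}{R^2+\abs{\nabla w}^2}\le 2\abs{\nabla^2 w},
\]
so the perturbation term contributes only $2(\alpha-1)\abs{\nabla^2 w}$ after multiplication by $\alpha-1$.

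For $1/2\le\rho<\rho'\le 3/4$ I would choose a cut-off $\varphi$ with $\varphi\equiv1$ on $B_\rho$, supported in $B_{\rho'}$, $\abs{\nabla\varphi}\le C(\rho'-\rho)^{-1}$ and $\abs{\nabla^2\varphi}\le C(\rho'-\rho)^{-2}$, and set $v=\varphi(w-\bar w)$; since $v$ has compact support, integration by parts in $\Real^2$ gives $\norm{\nabla^2 v}_{L^2}=\norm{\triangle v}_{L^2}$. Writing $\triangle v=\varphi\triangle w+2\nabla\varphi\cdot\nabla w+(w-\bar w)\triangle\varphi$ and substituting the equation for $\triangle w$, I would estimate the resulting terms in $L^2$: the term $\varphi h$ by $\norm{h}_{L^2(B_1)}$; the perturbation term by $2(\alpha-1)\norm{\nabla^2 w}_{L^2(B_{\rho'})}$ using the pointwise bound above; the term $\varphi A(\nabla w,\nabla w)$, using $\abs{A(\nabla w,\nabla w)}\le C\abs{\nabla w}^2$ and the Ladyzhenskaya/Gagliardo--Nirenberg inequality $\norm{\nabla w}_{L^4(B_{\rho'})}^2\le C(\norm{\nabla w}_{L^2(B_{\rho'})}\norm{\nabla^2 w}_{L^2(B_{\rho'})}+\norm{\nabla w}_{L^2(B_{\rho'})}^2)$ together with $\norm{\nabla w}_{L^2(B_1)}<\sqrt{\varepsilon_0}$, by $C\sqrt{\varepsilon_0}\,\norm{\nabla^2 w}_{L^2(B_{\rho'})}+C\sqrt{\varepsilon_0}\,\norm{\nabla w}_{L^2(B_1)}$; and the two cut-off error terms by $C(\rho'-\rho)^{-2}\norm{\nabla w}_{L^2(B_1)}$, where we use the Poincar\'e inequality $\norm{w-\bar w}_{L^2(B_1)}\le C\norm{\nabla w}_{L^2(B_1)}$.

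Collecting these estimates gives, for all $1/2\le\rho<\rho'\le 3/4$,
\[
\norm{\nabla^2 w}_{L^2(B_\rho)}\le (2(\alpha-1)+C\sqrt{\varepsilon_0})\,\norm{\nabla^2 w}_{L^2(B_{\rho'})}+\frac{C\norm{\nabla w}_{L^2(B_1)}}{(\rho'-\rho)^2}+\norm{h}_{L^2(B_1)}.
\]
Choosing $\varepsilon_0$ small and then $\alpha-1$ small so that $2(\alpha-1)+C\sqrt{\varepsilon_0}<1$, the standard iteration lemma (Giaquinta's hole-filling lemma) yields $\norm{\nabla^2 w}_{L^2(B_{1/2})}\le C(\norm{\nabla w}_{L^2(B_1)}+\norm{h}_{L^2(B_1)})$. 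Adding the bounds $\norm{w-\bar w}_{L^2(B_{1/2})}+\norm{\nabla w}_{L^2(B_{1/2})}\le C\norm{\nabla w}_{L^2(B_1)}$ coming from the Poincar\'e inequality gives the claimed $W^{2,2}$ estimate.

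The main obstacle is the absorption step: both the perturbation term $\nabla\abs{\nabla w}^2\cdot\nabla w/(R^2+\abs{\nabla w}^2)$, which carries the full Hessian of $w$, and the term $A(\nabla w,\nabla w)$, which sits exactly at the borderline of the two-dimensional embedding $W^{1,2}\not\hookrightarrow L^\infty$, must be moved to the left-hand side. This works only because they come with the small factors $\alpha-1$ and $\sqrt{\varepsilon_0}$ and because the iteration lemma lets one reabsorb a Hessian norm taken over a slightly larger ball; the points that require care are the uniform-in-$R$ pointwise bound and keeping the Gagliardo--Nirenberg and Poincar\'e constants under control on the balls $B_{\rho'}$ with $\rho'\in[1/2,3/4]$.
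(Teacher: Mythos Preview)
Your argument is correct. The pointwise bound on the Sacks--Uhlenbeck perturbation, the use of Ladyzhenskaya's inequality to handle $A(\nabla w,\nabla w)$ in $L^2$, and the absorption via Giaquinta's iteration lemma all go through as you describe, and the constants are uniform on $\rho'\in[1/2,3/4]$.

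The paper's proof takes a slightly different route. Instead of working directly in $L^2$ and iterating over concentric balls, it first runs the estimate in $L^p$ for $p<2$ (in fact $p=4/3$): with $q=2p/(2-p)$ one has $\norm{|\nabla(\varphi w)|\,|\nabla w|}_{L^p}\le\norm{\nabla(\varphi w)}_{L^q}\norm{\nabla w}_{L^2}$, and since $W^{2,p}\hookrightarrow W^{1,q}$ in two dimensions, this term is bounded by $C\sqrt{\varepsilon_0}\,\norm{\varphi w}_{W^{2,p}}$ and absorbed \emph{directly} on the left, with no iteration. The same holds for the perturbation term, since $\varphi\abs{\nabla^2 w}\le\abs{\nabla^2(\varphi w)}+\text{lower order}$. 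One then reads off $\varphi w\in W^{1,4}$ from Sobolev embedding and returns to $L^2$ for the final estimate. The trade-off is that the paper's approach avoids the hole-filling iteration at the cost of a two-step bootstrap through $p=4/3$; your approach stays at $p=2$ throughout but needs the iteration because at $p=2$ the critical embedding $W^{2,2}\hookrightarrow W^{1,\infty}$ fails, so the Hessian norm on the right lands on the larger ball $B_{\rho'}$ rather than on $\varphi w$ itself. Both are standard implementations of the Sacks--Uhlenbeck $\varepsilon$-regularity mechanism.
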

    \begin{proof}
        The proof is similar to the main estimate in \cite{SU}. During the proof, we write $\norm{\cdot}_{p,q}$ for $\norm{\cdot}_{W^{p,q}(B_1)}$. Multiplying the equation by $\varphi$ and taking the $L^p$ norm, we have
        \begin{equation}\label{eqn:lp}
            \norm{\triangle(\varphi w)}_{0,p}\leq 2(\alpha-1)\norm{\varphi w}_{2,p}+C \norm{ \abs{\nabla (\varphi w) }\abs{\nabla w}}_{0,p} +C \norm{w}_{1,p} +C\norm{h}_{0,p}.
        \end{equation}
        For $1<p<2$, the H\"older inequality implies that
        \begin{equation*}
            \norm{\abs{\nabla  (\varphi w)} \abs{\nabla w}}_{0,p}\leq \norm{ \nabla (\varphi w)}_{0,q} \norm{\nabla w}_{0,2},
        \end{equation*}
        where $q=2p/(2-p)$.
        By the $L^p$-estimate, for $\alpha-1$ small, we have
        \begin{equation*}
            \norm{\varphi w}_{2,p}\leq C\varepsilon_0 \norm{\nabla (\varphi w)}_{0,q}+C\norm{w}_{1,p} +C\norm{h}_{0,p}.
        \end{equation*}
        If we further assume that $\varepsilon_0$ is small, then it follows from the Sobolev embedding theorem
        \begin{equation*}
            \norm{\varphi w}_{2,p}\leq C\norm{w}_{1,p} +C\norm{h}_{0,p}.
        \end{equation*}
        Setting $p=4/3$ and using Sobolev embedding again, we have
        \begin{equation*}
            \norm{\varphi w}_{1,4}\leq C(\norm{w}_{1,4/3}+\norm{h}_{0,4/3}).
        \end{equation*}
        With this, we can apply  the interior $L^2$-estimate to (\ref{eqn:su}) and  multiply it  by $\varphi$ to get
        \begin{equation*}
        \norm{w-\bar{w}}_{W^{2,2}(B_{1/2})}\leq C(\norm{\nabla w}_{L^2(B_1)}+\norm{h}_{L^2(B_1)}).
        \end{equation*}
    \end{proof}

    We now prove Theorem \ref{thm:lin}.
\begin{proof}
    Since $u_i$ is smooth, we can find $\alpha_i>1$ such that
    \begin{equation}\label{eqn:alpha}
        E_{\alpha_i}(u_i)\leq E(u_i)+ \text {Vol}(M)+\frac{1}{i}.
    \end{equation}
    We then consider the $\alpha_i-$flow (\ref{eqn:flow1}) with initial value $u_i$ and denote the solution in Theorem \ref{thm:global} by $u_i(\cdot,t)$. Multiplying (\ref{eqn:flow1}) by $(1+\abs{\nabla u_i}^2)^{\alpha_i-1} \partial_t u_i$ and integrating by parts, we obtain a global energy inequality
    \begin{equation*}
        E_{\alpha_i}(u_i)=E_{\alpha_i}(u_i(\cdot,1))+\int_0^1 \int_M (1+\abs{\nabla u_i}^2)^{\alpha_i-1} \abs{\partial_t u_i}^2 dv dt.
    \end{equation*}
    By (\ref{eqn:alpha}),
    \begin{equation*}
        \lim_{i\to \infty} E_{\alpha_i}(u_i)=\lim_{i\to \infty} E(u_i)+ \text {Vol}(M).
    \end{equation*}
    Since $u_i$ is a minimizing sequence of $E$,
    \begin{equation*}
        \lim_{i\to \infty} E_{\alpha_i}(u_i(\cdot,1)) \geq \lim_{i\to \infty} E(u_i)+\text {Vol}(M).
    \end{equation*}
    Therefore,
    \begin{equation}\label{eqn:harmonic}
      \lim_{i\to \infty} \int_0^1\int_M (1+\abs{\nabla u_i}^2)^{\alpha_i-1}\abs{\partial_t u_i}^2 dv dt=0.
    \end{equation}
    Let $u$ be the weak limit of $u_{i}$ in $W^{1,2}(M\times [0,1])$ (by taking a subsequence if necessary), which is a weak solution to the harmonic map flow
    by Theorem \ref{thm:routine}. (\ref{eqn:harmonic}) implies that $u(\cdot,t)$ is (weak) harmonic map independent of $t$. Since $u_{i}(x,t)$ converges
    weakly to $u$ in $W^{1,2}(M\times [0,1])$  and the trace operator $T: W^{1,2}(M\times [0,1])\to L^2(M\times\{0\})$ is bounded linear operator,
    $u_i=u_{i}(\cdot,0)$ converges weakly  to $u(\cdot,0)$ in $L^2$. Hence, $u=u(\cdot,0)$ is a harmonic map.

    Instead of proving the energy identity for $u_i$ directly, we will find another sequence of maps $v_i$, which is also minimizing in the same homotopy class and satisfies some perturbed harmonic map equation (see (\ref{eqn:vi}) below).

    Thanks to (\ref{eqn:harmonic}), we may assume by taking subsequence if necessary
    \begin{equation}
        \int_{1/2}^1 \int_M \abs{\partial_t u_i}^2 dv dt\leq \frac{1}{16^i}.
        \label{eqn:thanks}
    \end{equation}
    Consider
    \begin{equation*}
        I_i=\left \{t\in [1/2,1]:\quad \int_M \abs{\partial_t u_i (\cdot, t)}^2 dv\geq \frac{1}{2^i}\right \}.
    \end{equation*}
    It follows from (\ref{eqn:thanks}) that
    \begin{equation*}
        \abs{I_i}\leq \frac{1}{8^i},
    \end{equation*}
    where $\abs{I_i}$ is the Lebesgue measure of $I_i$. Since
    \begin{equation*}
        \sum_{i=1}^{\infty} \abs{I_i}\leq \sum_{i=1}^\infty \frac{1}{8^i}\leq \frac{1}{4},
    \end{equation*}
    there exists at least one $t_0\in [1/2,1]$ such that for all
    $i$
    \begin{equation}\label{eqn:tension}
        \int_M \abs{\partial_t u_i(\cdot,t_0)}^2 dv \leq \frac{1}{2^i}.
    \end{equation}
    For simplicity, denote $u_i(x,t_0)$ by $v_i$ and write $h_i$ for $\partial_t u_i(x,t_0)$. Hence, $v_i$ satisfies the equation
    \begin{equation}\label{eqn:vi}
        \triangle v_i+ 2(\alpha_i-1)(\nabla ^2 v_i,\nabla v_i)\nabla v_i (1+\abs{\nabla v_i}^2)^{-1}+A(\nabla v_i,\nabla v_i)=h_i.
    \end{equation}
    As shown above, $v_i$ converges weakly  to $u$ in in $W^{1,2}$. Moreover, by (\ref{eqn:alpha}) and Lemma \ref{lem:decrease}, $v_i$ is also a minimizing sequence of $E$.
    With Lemma \ref{lem:regularity}, it is well known that there exists finitely many points $x_1,\cdots,x_k$ such that $v_i$ converges strongly to
    $u$ in $W^{1,2}$ away from these points. By the removable singularity theorem, $u$ can be extended to a smooth map on $M$. The proof of (\ref{eqn:identity})
    will be given in Section \ref{sec:identity}.

    Now, let us prove the second part of the theorem and assume that $\pi_2(N)$ is trivial. For simplicity, we assume $k=1$.
    Let $\eta$ be a smooth cutoff function which is $1$ for $r\geq 1$ and $0$ for $r\leq 1/2$. For some $\rho>0$, we define a new sequence of maps
     $\tilde{v}_i: M\to N$ such that $\tilde{v}_i$ is the same as $v_i$ outside $B_\rho(x_1)$ and for  $x\in B_\rho(x_1)$
    \begin{equation*}
        \tilde{v}_i(x)=\exp_{u(x)} \left( \eta(\frac{\abs{x}}{\rho})\exp^{-1}_{u(x)}\circ v_i(x) \right),
    \end{equation*}
    where $\exp$ is the exponential map on $N$. We claim that
    \begin{equation}\label{eqn:modify}
        \norm{\tilde{v}_i-u}_{W^{1,2}(M)}\to 0
    \end{equation}
    as $i\to \infty$. To see this, it suffices to consider $B_\rho(x_1)\setminus B_{\rho/2}(x_1)$ because $\tilde{v}_i\equiv u$ on $B_{\rho/2}(x_1)$
    and $\tilde{v}_i\equiv v_i$
    outside $B_\rho(x_1)$. On the other hand,  Lemma \ref{lem:regularity} implies that $v_i$ converges to $u$ on $B_{\rho}(x_1)\setminus B_{\rho/2}(x_1)$
    strongly in $W^{1,2}$ and $C^\beta$ for some $\beta>0$. Hence for large $i$, $v_i(B_{\rho(x_1)}\setminus B_{\rho/2}(x_1))$ lies in a small neighborhood of $u(x_1)$,
     where $\exp^{-1}_{u(x)}$ is a well defined smooth map (if $\rho$ is small). Since $F(y)=\exp_{u(x)}\left( \eta(\frac{\abs{x}}{\rho})\exp^{-1}_{u(x)} y \right)$ is
     a smooth map from a neighborhood of $u(x_1)$ into itself, we have
    \begin{eqnarray*}
        \norm{\tilde{v_i}-u}_{W^{1,2}(B_\rho\setminus B_{\rho/2} (x_1))} &=& \norm {F\circ {v_i}-F\circ u}_{W^{1,2}(B_{\rho}\setminus B_{\rho/2}(x_1))} \\
        &\leq& C \norm{v_i-u}_{W^{1,2}(B_\rho\setminus
        B_{\rho/2}(x_1))}\to 0 \quad \text {as } i\to\infty.
    \end{eqnarray*}
Thus the claim follows.
    Since $\pi_2(N)$ is trivial, $\tilde{v}_i$ is in the same homotopy class as $v_i$. Noticing that $v_i$ is a minimizing sequence of the Dirichlet energy and
    $v_i$ converges weakly to $u$ in $W^{1,2}$,  we have
    \begin{equation}\label{eqn:weak}
       E(u)\leq \lim_{i\to \infty} E(v_i)\leq \lim_{i\to \infty}
       E(\tilde{v}_i),
    \end{equation}
which implies
    \begin{equation*}
        E(u)=\lim_{i\to \infty} E(v_i).
    \end{equation*}
    Now, $v_i$ converges to $u$ strongly in $W^{1,2}$, which means that there is no energy concentration and Lemma \ref{lem:regularity} in turn shows
    that the convergence is in $C^\beta$ for some $\beta>0$.
\end{proof}

\section{An energy identity}\label{sec:identity}

Let $u_i$ be a sequence of maps from $M$ to $N$ with bounded energy such that

(1) an $\varepsilon-$regularity lemma such as Lemma \ref{lem:regularity} holds for each $u_i$;

(2) $u_i$ minimizes $E$ within some fixed homotopy class.

In the previous section, we have constructed a sequence of $v_i$ satisfying (1) and (2) by using $\alpha-$flow.
The purpose of this section is to prove that for such a sequence $u_i$, there exists $l$ harmonic maps $\omega_k(k=1,\cdots,l)$ from $\Real^2$ to $N$ such that
\begin{equation*}
    \lim_{i\to \infty} E(u_i) =E(u) +\sum_{k=1}^l E(\omega_k),
\end{equation*}
where $u$ is the weak limit of $u_i$ in $W^{1,2}$.

\subsection{Review of bubble tree construction}
Let us review the bubble tree construction due to Parker \cite{P}. After taking a subsequence which is still denoted by $u_i$,
we can decompose the domain into three parts. For each bubble points $x_k$, there is $c_{i,k}\in M$ and $\epsilon_{i,k},\lambda_{i,k}>0$ such that

(1) $u_i$ converges to $u$ on any compact subset of $M\setminus \{x_k\}$;

(2) if we rescale $u_i$ on $B_{i\lambda_{i,k}}(c_{i,k})$ to a ball of radius $i$, then there is again a weak limit $\omega_k$ (a level 1 bubble) and finitely many bubble points so that the decomposition happens again at a higher level.

(3) $A_{i,k}\stackrel{\triangle}{=}B_{\epsilon_{i,k}}(c_{i,k})\setminus B_{i\lambda_i,k}(c_{i,k})$ is the neck region. We denote $x_k$ by $p_k$ and $\omega_l(\infty)$ by $q_k$.

 \begin{figure}[h]
     \begin{center}
         \subfigure[bubble tree]{\label{fig:tree}\includegraphics[width=4cm]{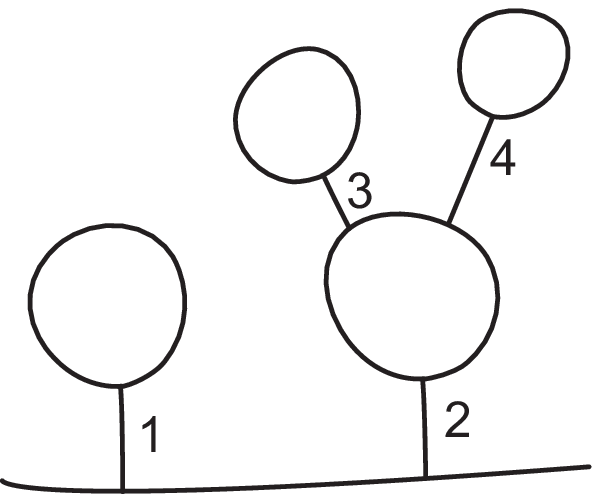}}
         \qquad \qquad
         \subfigure[domain decomposition]{\label{fig:shadow}\includegraphics[width=5cm]{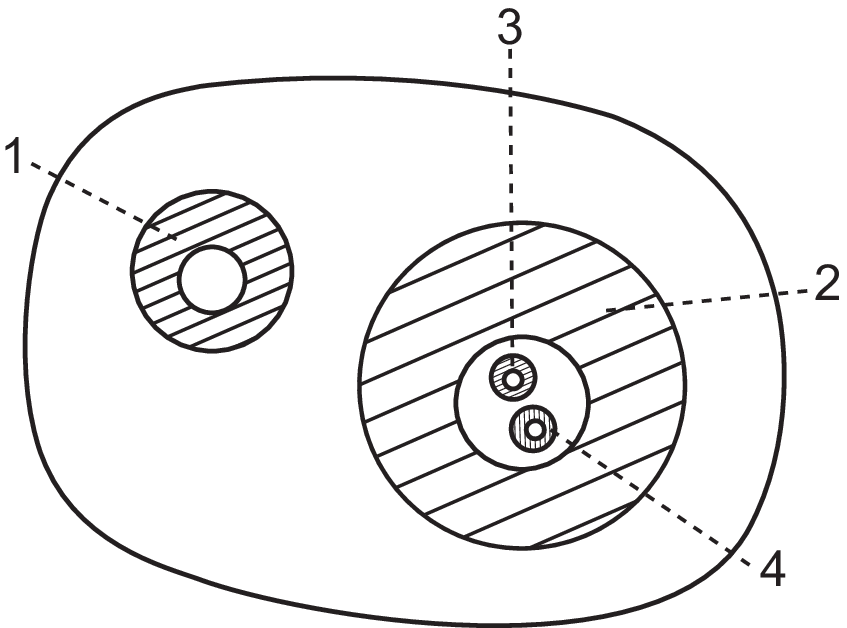}}
     \end{center}
     \caption{An example}
     \label{fig:example}
 \end{figure}

There are finitely many bubbles including the so-called ghost
bubbles which are constant maps and finitely many necks. For simplicity, we label bubbles
and necks by only one index. So $A_{i,k}$ could be a neck region
at any level. Suppose there are totally $L$ such necks. An example of buble tree and the decomposition are illustrated in Figure \ref{fig:example}.
The shadow parts in Figure \ref{fig:shadow} stand for the neck regions.

We will need the following fact, which was proved during the
construction of this bubble tree structure (see (1.6) in
\cite{P}):  On a neck region,
\begin{equation}\label{eqn:our}
  \int_{A_{i,k}} e(u_i) =C_R<\varepsilon_1/6,
\end{equation}
where $\varepsilon_1$ is the constant given in Lemma
\ref{lem:smallball}. To see this, recall that $\lambda_{i,k}$ is defined to be
\begin{equation*}
    \lambda_{i,k}=\mbox{largest }\lambda \mbox{ such that } \int_{B_{\epsilon_{i,k}(c_{i,k})}\setminus B_{\lambda}(c_{i,k})}e(u_i) \geq C_R
\end{equation*}
and $A_{i,k}\subset B_{\epsilon_{i,k}}(c_{i,k})\setminus B_{\lambda_{i,k}}(c_{i,k})$. By choosing $\delta$ small, $R$ large and a
new subsequence if necessary, we may require
\begin{equation}\label{eqn:smallneck}
    \int_{B_{\delta}\setminus B_{\lambda_{i,k}R}} e(u_i) <\varepsilon_1/3
\end{equation}
for $i$ large.

To prove (\ref{eqn:identity}), it suffices to show
\begin{equation*}
  \sum_{k=1}^L \lim_{i\to \infty} \int_{A_{i,k}} e(u_i)=0.
\end{equation*}

\subsection{An extension lemma}
  Let $p$ be a point in $N$ and $r$ be the injectivity radius of $N$. Consider a map $f:S^1\to N$ whose image lies in $B_r(p)$. Define $\tilde{f}:B_2\setminus B_1 \to N$ as
\begin{equation}
  \tilde{f}(\theta,r)=\exp_p [(2-r) \exp_p^{-1} f(\theta)].
  \label{eqn:closeup}
\end{equation}
It is obvious from the definition that $\tilde{f}(\theta,1)=f(\theta)$ and $\tilde{f}(\theta,2)=p$. We need to compute the energy of $\tilde{f}$. The chain rule implies
\begin{equation*}
  \abs{\pfrac{}{r} \tilde{f}(\theta,r)}=\abs{ D(\exp_p)_{(2-r)\exp_p^{-1} f(\theta)} \cdot \exp_p^{-1} f(\theta)} \leq Cd(f(\theta),p)
\end{equation*}
and
\begin{equation*}
  \abs{\pfrac{}{\theta} \tilde{f}(\theta,r)}= \abs{D(\exp_p)_{(2-r)\exp_p^{-1} f(\theta)} \cdot (2-r) D(\exp_p^{-1})_{f(\theta)} f'(\theta)} \leq C (2-r) f'(\theta).
\end{equation*}
Hence,
\begin{equation*}
    E(\tilde{f})\leq C  \int_{S^1} \int_1^2\left( d(f(\theta),p)^2+ \frac{1}{r^2} (2-r)^2 \abs{f'(\theta)}^2\right) r dr d\theta.
\end{equation*}
We summarize the computation in a lemma.
\begin{lem}
    \label{lem:cone}
    For any $\varepsilon>0$, there exists $\eta>0$ depending only on $\varepsilon$ and $N$ such that if $f:S^1\to N$ satisfies that
    \begin{equation*}
        \max_\theta [d(f(\theta),p) +\abs{f'(\theta)}] <\eta ,
    \end{equation*}
    then the Dirichlet energy of $\tilde{f}$ defined in (\ref{eqn:closeup}) on $B_2\setminus B_1$ is smaller than $\varepsilon$.
\end{lem}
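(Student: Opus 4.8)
The plan is to feed the pointwise smallness hypothesis directly into the energy estimate already established in the computation preceding the statement. Recall that the chain rule has given
\[
    E(\tilde f)\le C\int_{S^1}\int_1^2\left( d(f(\theta),p)^2+\frac{(2-r)^2}{r^2}\,\abs{f'(\theta)}^2\right) r\,dr\,d\theta ,
\]
where $C$ depends only on uniform bounds for $\exp_p$, $\exp_p^{-1}$ and their differentials on the fixed geodesic ball $B_{r_0}(p)\subset N$, with $r_0$ the injectivity radius of $N$; in particular $C$ depends only on $N$ and not on $f$.

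First I would bound the two radial integrals by absolute constants: since $r\ge 1$ on the annulus $B_2\setminus B_1$, one has $\int_1^2 r\,dr=\tfrac32$ and $\int_1^2\frac{(2-r)^2}{r^2}\,r\,dr\le\int_1^2(2-r)^2\,dr=\tfrac13$, so the factor $1/r^2$ introduces no degeneracy. Pulling the suprema over $\theta$ out of the remaining integral over $S^1$ then yields
\[
    E(\tilde f)\le C'\left(\max_\theta d(f(\theta),p)^2+\max_\theta\abs{f'(\theta)}^2\right)\le 2C'\eta^2 ,
\]
with $C'$ depending only on $N$, where the last inequality uses that the hypothesis $d(f(\theta),p)+\abs{f'(\theta)}<\eta$ forces $d(f(\theta),p)<\eta$ and $\abs{f'(\theta)}<\eta$ for every $\theta$.

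Finally I would choose $\eta=\eta(\varepsilon,N)>0$ so small that $2C'\eta^2<\varepsilon$ and also $\eta<r_0$; the latter guarantees that $\exp_p^{-1}\circ f$ is well defined, so that $\tilde f$ indeed makes sense as a map into $N$. This would complete the proof. There is essentially no obstacle here; the only point worth a word of care is the uniformity of the constant in $f$, which holds because $\exp_p$ and $\exp_p^{-1}$, together with their differentials, are controlled uniformly over the single fixed ball $B_{r_0}(p)$ regardless of which curve $f$ lies in it, and because restricting to the annulus $B_2\setminus B_1$ keeps the radial variable bounded away from $0$.
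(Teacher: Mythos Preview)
Your proof is correct and follows exactly the approach of the paper: the paper simply states the lemma as a summary of the preceding chain-rule computation, and you have filled in the obvious remaining steps (bounding the radial integrals, pulling out the suprema, and choosing $\eta$ small). There is nothing to add.
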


\begin{rem}
  We also need a similar result for
  \begin{equation*}
    \tilde{f}(\theta,r)=\exp_p[(r-1)\exp_p^{-1}f(\theta)].
  \end{equation*}
  The proof is the same as that of Lemma \ref{lem:cone}, so we omit it.
\end{rem}
\subsection{A reference map}
We will construct a map from $M$ to $N$ whose image looks like the weak limit and bubbles connected by geodesics. It keeps the record of some topological information which will later be used to construct a new minimizing sequence.

\begin{lem} \label{lem:convexradius}
  There exists some positive constant $\sigma$ depending only on $N$ such that if $f_1,f_2$ are two continuous maps from $\Omega$ into $N$ such that
  \begin{equation*}
    d(f_1(x),f_2(x))\leq \sigma
  \end{equation*}
  for all $x\in \Omega$ and $f_1|_{\partial \Omega}=f_2|_{\partial \Omega}$, then there exists a homotopy deforming $f_1$ to $f_2$ with $\partial \Omega$ fixed.
\end{lem}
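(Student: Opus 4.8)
\textbf{Proof proposal for Lemma \ref{lem:convexradius}.}
The plan is to exploit the geodesic convexity of small balls in $N$. Since $N$ is a closed (hence complete) Riemannian manifold, there is a \emph{convexity radius} $\sigma_0>0$ with the property that for any two points $y_1,y_2$ with $d(y_1,y_2)\le 2\sigma_0$ there is a \emph{unique} minimizing geodesic from $y_1$ to $y_2$ lying in $B_{2\sigma_0}(y_1)$, and this geodesic depends smoothly on its endpoints. I would take $\sigma$ to be this $\sigma_0$ (shrinking it further if convenient). Given the hypothesis $d(f_1(x),f_2(x))\le\sigma$ for all $x\in\Omega$, define
\begin{equation*}
  H(x,s)=\gamma_{x}(s),\qquad s\in[0,1],
\end{equation*}
where $\gamma_x:[0,1]\to N$ is the unique minimizing geodesic, parametrized proportionally to arclength, with $\gamma_x(0)=f_1(x)$ and $\gamma_x(1)=f_2(x)$. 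Equivalently, using the exponential map and its inverse on the ball $B_{2\sigma}(f_1(x))$, one may write $H(x,s)=\exp_{f_1(x)}\big(s\,\exp_{f_1(x)}^{-1}f_2(x)\big)$, which makes the formula explicit.

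The key steps are then routine verifications. First, $H(\cdot,0)=f_1$ and $H(\cdot,1)=f_2$, so $H$ is a homotopy between the two maps. Second, $H$ is continuous: on the set where $d(f_1(x),f_2(x))\le\sigma$ the map $(y_1,y_2)\mapsto\exp_{y_1}^{-1}y_2$ is continuous (indeed smooth) since $y_2$ stays in the domain of the local diffeomorphism $\exp_{y_1}$, and composition with the continuous maps $f_1,f_2$ and with $\exp$ preserves continuity; hence $H$ is continuous on $\Omega\times[0,1]$. Third, on $\partial\Omega$ we have $f_1(x)=f_2(x)$, so $\exp_{f_1(x)}^{-1}f_2(x)=0$ and therefore $H(x,s)=f_1(x)$ for all $s$; that is, the homotopy fixes $\partial\Omega$ pointwise, as required.

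The only genuine point to be careful about is the existence of a uniform $\sigma$ that works at \emph{every} point of $N$ simultaneously; this is exactly the statement that the convexity radius of a compact Riemannian manifold is bounded below by a positive constant, which is standard (it follows from compactness together with the fact that the injectivity radius and the upper curvature bound are controlled). I do not expect any real obstacle here — the lemma is essentially a packaging of geodesic convexity — so the main ``work'' is simply to record the correct constant and to note the smooth dependence of short geodesics on their endpoints, which guarantees continuity (and in fact smoothness) of $H$.
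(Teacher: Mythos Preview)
Your proof is correct and is exactly the standard geodesic-homotopy argument the paper has in mind; the paper itself simply states ``We omit the proof since it is obvious.'' There is nothing to add: your choice of $\sigma$ as the convexity radius and the explicit formula $H(x,s)=\exp_{f_1(x)}\big(s\,\exp_{f_1(x)}^{-1}f_2(x)\big)$ is the natural way to fill in the omitted details.
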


We omit the proof since it is obvious.

\begin{lem}\label{lem:smallball}
  There is a positive constant $\varepsilon_1$ depending only on $N$ such that any $W^{1,2}\cap C^0$ map $g:S^2\to N$ with $E(g)<\varepsilon_1$ is homotopic to a constant mapping.
\end{lem}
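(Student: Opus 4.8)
The statement asserts a "small-energy implies nullhomotopic" principle for continuous Sobolev maps $g:S^2\to N$. The plan is to reduce to the known regularity theory: first improve $g$ to a smooth (or at least Hölder-continuous) map without changing its homotopy class, then show a small-energy smooth map on $S^2$ must be close to a point, and finally invoke Lemma \ref{lem:convexradius} to contract it. More precisely, I would cover $S^2$ by finitely many coordinate balls $B_1$ (say, two or three of them, coming from stereographic charts, rescaled so the lemma's hypotheses apply), and on each such ball apply Lemma \ref{lem:regularity} with $\alpha=1$ (so the perturbation term vanishes) and $h=0$: a $W^{1,2}$ weakly harmonic map with $E(g)<\varepsilon_0$ on $B_1$ satisfies $\|g-\bar g\|_{W^{2,2}(B_{1/2})}\le C\|\nabla g\|_{L^2(B_1)}$. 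Since $W^{2,2}\hookrightarrow C^\beta$ in two dimensions, this gives an interior $C^\beta$ bound, and by a standard bootstrap (Schauder) $g$ is in fact smooth in the interior of each chart; patching the charts shows $g$ is smooth on all of $S^2$.

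The second step is the oscillation estimate. Having $g$ smooth, I would use the $C^\beta$ (indeed $C^1$, after bootstrap) bound from Lemma \ref{lem:regularity} together with a covering argument: choosing $\varepsilon_1$ small forces $\|\nabla g\|_{L^2(B_1)}<\varepsilon_0$ on every chart ball simultaneously, so on each $B_{1/2}$ the oscillation of $g$ is controlled by $C\sqrt{E(g)}<C\sqrt{\varepsilon_1}$. Chaining the overlapping balls, the total oscillation of $g$ on $S^2$ is at most $C\sqrt{\varepsilon_1}$. Hence, after choosing $\varepsilon_1$ small enough (depending only on $N$, via the constant $\sigma$ of Lemma \ref{lem:convexradius} and the constant $C$ here), the image $g(S^2)$ lies in a ball $B_\sigma(p)$ for some $p\in N$.

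The final step is the contraction. Let $f_2\equiv p$ be the constant map. Since $g$ and $f_2$ both map all of $S^2$ into $B_\sigma(p)$, we have $d(g(x),f_2(x))\le \sigma$ for all $x$, and vacuously $g|_{\partial S^2}=f_2|_{\partial S^2}$ since $\partial S^2=\emptyset$. Lemma \ref{lem:convexradius} then produces a homotopy from $g$ to the constant map $p$, so $g$ is nullhomotopic. (Alternatively, one can just note that $B_\sigma(p)$, being inside the injectivity radius, is contractible, and compose $g$ with the geodesic retraction to $p$.)

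The only real subtlety is making sure the $\varepsilon$-regularity of Lemma \ref{lem:regularity} is actually applicable: one needs to verify that the chart ball, after rescaling, has small enough Dirichlet energy and that the metric distortion from the round metric on $S^2$ to the flat metric on $B_1$ only affects constants—this is harmless since conformal changes preserve the Dirichlet integral in dimension two, and bi-Lipschitz changes only cost a fixed factor. So the "hard part" is essentially bookkeeping: arranging a finite atlas with controlled overlaps and tracking how the constants compound so that a single choice of $\varepsilon_1=\varepsilon_1(N)$ works uniformly; the analytic content is entirely contained in Lemma \ref{lem:regularity} and Lemma \ref{lem:convexradius}, which we are free to use.
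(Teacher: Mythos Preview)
There is a genuine gap in the first step. Lemma~\ref{lem:regularity} is an $\varepsilon$-regularity statement for maps that \emph{satisfy the equation} (\ref{eqn:su}); with $\alpha=1$ and $h=0$ this is precisely the harmonic map equation. But the map $g$ in Lemma~\ref{lem:smallball} is an \emph{arbitrary} $W^{1,2}\cap C^0$ map with small energy --- it is not assumed to be (weakly) harmonic, and there is no reason it should satisfy any elliptic equation. So you cannot invoke Lemma~\ref{lem:regularity} to get a $W^{2,2}$ bound on $g$, and the subsequent bootstrap to smoothness and the oscillation estimate both collapse. In fact the oscillation conclusion is simply false for generic maps: a smooth map on $B_1$ can have arbitrarily small Dirichlet energy while its image reaches far across $N$ (send a thin strip of width $\epsilon$ along a fixed-length geodesic; the energy scales like $\epsilon$). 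So small energy alone does \emph{not} force the image into a ball of radius $\sigma$, and the contraction via Lemma~\ref{lem:convexradius} is not available without further input.

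The paper fixes this by first \emph{producing} a PDE: it runs the harmonic map flow $g(t)$ from $g$. Since the energy is below the bubbling threshold for all time, Struwe's theory gives a globally smooth flow which (sub)converges smoothly to a harmonic map $g_\infty$ with $E(g_\infty)<\varepsilon_1$; the Sacks--Uhlenbeck energy gap then forces $g_\infty$ to be constant. Smooth convergence along a subsequence puts $g(t_n)$ within $\sigma$ of $g_\infty$, so Lemma~\ref{lem:convexradius} yields $g(t_n)\simeq g_\infty$, and since the smooth flow preserves the homotopy class, $g\simeq g_\infty$. If you want to avoid the flow, an alternative is to mollify $g$ to a nearby smooth map in the same homotopy class and then use that any nontrivial homotopy class carries at least the energy of a harmonic sphere (again the Sacks--Uhlenbeck gap); either way, some mechanism that turns ``small energy'' into ``satisfies an equation'' or ``bounded below by a gap'' is needed, and that is exactly what your proposal is missing.
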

\begin{proof}
    Consider the harmonic map flow $g(t)$ starting from $g$. Since the energy is not enough for a bubble (if $\varepsilon_1$ is small) then we know the flow is smooth for all time and converges sequencially and strongly to a harmonic map $g_\infty$. In \cite{SU}, it is proved that the energy of such a harmonic map is no less than some constant depending only $N$ unless it is a constant map. By choosing $\varepsilon_1$ small, we may assume $g_\infty$ is a constant map. If $t_n\to \infty$ is a sequence of time such that $g(t_n)$ converges smoothly to $g_\infty$, we known from Lemma \ref{lem:convexradius} that $g(t_n)$ is homotopic to $g_\infty$ for $n$ large. Hence $g$ is homotopic to the constant map $g_\infty$.
\end{proof}

The next lemma shows how to squeeze a little room on both ends of a long neck region without changing the energy much.
\begin{lem}\label{lem:squeeze}
    For $i$ large so that $\delta >> \lambda_i R $, there is a diffeomorphism $f$ from $B_\delta\setminus B_{\lambda_i R}$ to $B_{\delta/4} \setminus B_{4\lambda_i R}$ such that for any map $u: B_\delta\setminus B_{\lambda_i R}\to N$, we have
    \begin{equation*}
        1-C(\frac{\delta}{\lambda_i R})\leq \frac{E(u\circ f^{-1}, B_{\delta/4}\setminus B_{4\lambda_i R})}{E(u,E_\delta\setminus E_{\lambda_i R})} \leq  1+C(\frac{\delta}{\lambda_i R}),
    \end{equation*}
    where $C(\frac{\delta}{\lambda_i R})$ is a constant which goes to zero if $\frac{\delta}{\lambda_i R}$ goes to infinity.
\end{lem}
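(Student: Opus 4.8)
The plan is to exploit the conformal invariance of the Dirichlet energy in two dimensions by realizing both annuli as cylinders in logarithmic polar coordinates. Writing $r=\abs{x}$ and $s=\log r$, the annulus $B_\delta\setminus B_{\lambda_i R}$ corresponds to the flat cylinder $[a,b]\times S^1$ with $a=\log(\lambda_i R)$ and $b=\log\delta$, and for any $u$ one has the identity
\begin{equation*}
    E(u,B_\delta\setminus B_{\lambda_i R})=\int_a^b\int_{S^1}\left(\abs{\partial_s u}^2+\abs{\partial_\theta u}^2\right)\,d\theta\,ds,
\end{equation*}
the Euclidean conformal factor $r^2$ cancelling out. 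Likewise the target annulus $B_{\delta/4}\setminus B_{4\lambda_i R}$ is the cylinder $[a+\log4,\,b-\log4]\times S^1$, i.e. the same cylinder shortened by exactly $\log16$.

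First I would define $f$ in these coordinates to be the affine rescaling in $s$ fixing the angular variable. Put $\ell=b-a=\log(\delta/(\lambda_i R))$ and $c=1-\tfrac{\log16}{\ell}$, so that $0<c<1$ once $i$ is large enough that $\delta>16\lambda_i R$; let $\phi$ be the unique affine map with $\phi'\equiv c$ sending $[a,b]$ onto $[a+\log4,\,b-\log4]$, and set $f(s,\theta)=(\phi(s),\theta)$. In polar coordinates this reads $f(r,\theta)=(e^{d}r^{c},\theta)$ for the appropriate constant $d$; it is visibly a diffeomorphism of the two annuli taking each bounding circle to the corresponding one. Computing the energy of $u\circ f^{-1}$ is then immediate: since $\partial_\sigma(u\circ f^{-1})=(\phi^{-1})'\,\partial_s u$ and $\partial_\theta(u\circ f^{-1})=\partial_\theta u$, the change of variables $\sigma=\phi(s)$ gives
\begin{equation*}
    E\!\left(u\circ f^{-1},B_{\delta/4}\setminus B_{4\lambda_i R}\right)=\int_a^b\int_{S^1}\left(\tfrac1c\abs{\partial_s u}^2+c\,\abs{\partial_\theta u}^2\right)\,d\theta\,ds.
\end{equation*}
Because $0<c<1$, the right side lies between $c\,E(u,B_\delta\setminus B_{\lambda_i R})$ and $\tfrac1c E(u,B_\delta\setminus B_{\lambda_i R})$, so the ratio in the statement lies in $[c,1/c]$. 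Setting $C(\delta/(\lambda_i R))=\tfrac1c-1=\tfrac{\log16/\ell}{1-\log16/\ell}$, one checks $(c-1)^2\ge0$ forces $1-C\le c$, hence $1-C\le\text{ratio}\le 1+C$; and $C$ depends only on $\ell=\log(\delta/(\lambda_i R))$ and tends to $0$ as $\delta/(\lambda_i R)\to\infty$, which is the assertion.

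There is essentially no obstacle beyond bookkeeping. The one point worth watching is that the comparison constant must be expressed purely in terms of $\delta/(\lambda_i R)$ and be independent of the map $u$; this is automatic here because $f$, and hence $c$, is chosen once the two annuli are fixed, before $u$ enters. One could equally take any diffeomorphism of the form $(s,\theta)\mapsto(\phi(s),\theta)$ with $\phi'$ uniformly close to $1$, but the affine choice makes all the constants transparent.
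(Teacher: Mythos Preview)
Your proof is correct and follows essentially the same route as the paper: both pass to cylindrical coordinates via $s=\log r$, apply an affine stretch in the $s$-direction to map one cylinder onto the shorter one, and read off the energy ratio from the stretch factor. Your write-up is in fact a bit tidier, since you keep track of the correct cylinder length $\ell=\log(\delta/(\lambda_i R))$ and the shift $\log 4$ explicitly.
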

\begin{proof}
  By conformal invariance of the Dirichlet energy, $u$ can be regarded as a map from $S^1\times [0,K]$ to $N$, where $K=\delta/(\lambda_i R)$.
  Define $\tilde{f}: S^1\times [0,K]\to S^1\times [4, K-4]$ by
  \begin{equation*}
    \tilde{f} (\theta,\rho) = (\theta, 4+\rho\frac{K-8}{K}).
  \end{equation*}
  It is straightforward to check
    \begin{equation*}
      1-C(K)\leq \frac{E(u\circ \tilde{f}^{-1}, S^1\times [4,K-4])}{E(u,S^1\times [0,K])} \leq  1+C(K),
    \end{equation*}
    for some $C(K)$ such that $C(K)$ goes to zero if $K$ goes to infinity. Set $s: S^1\times [0,K]\to B_\delta\setminus B_{\lambda_i R}$ to be
    \begin{equation*}
        s(\theta,\rho)=(\theta,\lambda_i R e^\rho).
    \end{equation*}
    Then $f: B_\delta\setminus B_{\lambda_i R}\to B_{\delta/4}\setminus B_{4\lambda_i R}$ defined by
    \begin{equation*}
        f=s\circ \tilde{f}\circ s^{-1}
    \end{equation*}
    is the diffeomorphism we need.
\end{proof}

A remark about the notation is needed for the rest of the proof. For simplicity, we omit the subscript for necks as if there was only one neck. In fact, one should repeat the construction or proof for each neck region. By the nature of the following proof, this should cause no further difficulty. We write $B_\delta\setminus B_{\lambda_i R}$ for a neck region, omitting the center. We assume the neck connects the weak limit and a level 1 bubble. For a general neck, one should replace $u$ with some bubble map $\omega_j$.

We need $\delta$, $R$ and $i$ to satisfy the following conditions.

{\bf {(A1)}} Let $\varepsilon=\min \{\varepsilon_1,\xi/L\}/8$,
where $\xi$ is a positive number introduced in the next subsection
and $L$ is the total number of necks. Lemma \ref{lem:cone} gives
an $\eta$. Choose $\delta$ and $R$ so that for each neck region
$B_\delta\setminus B_{\lambda_i R}$, $u$ restricted to $\partial
B_\delta$ satisfies
\begin{equation}\label{eqn:b1}
  \max_{\theta} \abs{d(u,p)} + \abs{\frac{d}{d\theta}u}\leq \min\{\eta/2,\sigma/2 \}
\end{equation}
where $p$ is the value of $u$ at the bubble point and $\omega$
restricted to $\partial B_R\subset\Bbb R^2$ satisfies
\begin{equation}\label{eqn:b2}
  \max_{\theta} \abs{d(\omega,q)} +\abs{\frac{d}{d\theta} \omega} \leq \min\{\eta/2,\sigma/2\},
\end{equation}
where $q$ is $\lim_{x\to \infty} \omega(x)$.
Fix $\delta$ and $R$, choose some $i$ so large that

{\bf {(A2)}} (\ref{eqn:b1}) and (\ref{eqn:b2}) remains true if we
replace $u$ by $u_i$, $\omega$ by $u_i(\lambda_i\cdot)$, $\eta/2$
by $\eta$ and $\sigma/2$ by $\sigma$;

{\bf {(A3)}} $\abs{u_i-u}\leq \sigma/3$ for $x\in M\setminus
B_\delta$ and $\abs{u_i(\lambda_i x)-\omega(x)}\leq \sigma/3$ for
all $x\in B_R$ where $\sigma$ is the constant in Lemma
\ref{lem:convexradius};

{\bf {(A4)}} Let $C(\cdot)$ be the function given in Lemma
\ref{lem:squeeze}. Let $i$ be so large that
$1+C(\frac{\delta}{\lambda_i R})<2$.

Now we define a new map $w:M\to N$ so that

(1) $w=u_i$ on $M\setminus B_\delta$ and $B_{\lambda_i R}$;

(2) $w=u_i\circ f^{-1}$ on $B_{\delta/4}\setminus B_{4\lambda_i R}$, where $f$ is the diffeomorphism constructed in Lemma \ref{lem:squeeze};

(3) $w(\theta,r)=\exp_p[\frac{r-\delta/2}{\delta/2}\exp_p^{-1} u_i (\theta,\delta)]$ for $(\theta,r)\in B_{\delta}\setminus B_{\delta/2}$;

(4) $w(\theta,r)=w(\theta,\frac{\delta^2}{4r})$ for $(\theta,r)\in B_{\delta/2}\setminus B_{\delta/4}$;

(5) $w(\theta,r)=\exp_q[\frac{2\lambda_i R-r}{\lambda_i R}\exp_q^{-1} u_i (\theta,\lambda_i R)]$ for $(\theta,r)\in B_{2\lambda_i R}\setminus B_{\lambda_i R}$;

(6) $w(\theta,r)=w(\theta,\frac{(2 \lambda_i R)^2}{r})$ for $(\theta,r)\in B_{4\lambda_i R}\setminus B_{2\lambda_i R}$.

It is easy to see that $w$ is homotopic to $u_i$. Now let us consider the map $w$ restricted to $B_{\delta/2}\setminus B_{2\lambda_i R}$. It follows from the construction that $w$ maps $\partial B_{\delta/2}$ to $p$ and $\partial B_{2\lambda_i R}$ to $q$. Hence, topologically it induces a map from $S^2$ to $N$ if we identify $\partial B_{2\lambda_i R}$ with the north pole $P_N$ and $\partial B_{\delta/2}$ with the south pole $P_S$.  This is in fact a homotopically trivial map. To see this, consider a map $w_1:\Real^2 \to N$ which agrees with $w$ on $B_{\delta/2}\setminus 2\lambda_i R$ and maps $B_{2\lambda_i R}$ to $q$ and $\Real^2\setminus B_{\delta/2}$ to $p$. By the conformal invariance of $E$, we regard $w_1$ as a map from $S^2$ to $N$. By (A1), (A2) (A4) and (\ref{eqn:smallneck}) , the energy of this map is smaller than $\varepsilon_1$ in Lemma \ref{lem:smallball}, hence it is homotopic to a constant map. Let $F:S^2\times [0,1]\to N$ be the homotopy such that $F(\cdot,0)=w_1(\cdot)$, $F(\cdot,1)\equiv p$ and $F(P_S,t)=p$ for any $t\in [0,1]$. The curve $F(P_N,\cdot)$ connects $q$ and $p$. Let $\gamma:[0,1]\to N$ be the shortest geodesic homotopic to this curve connecting $q$ and $p$.
\begin{figure}[h]
    \begin{center}
        \includegraphics[width=10cm]{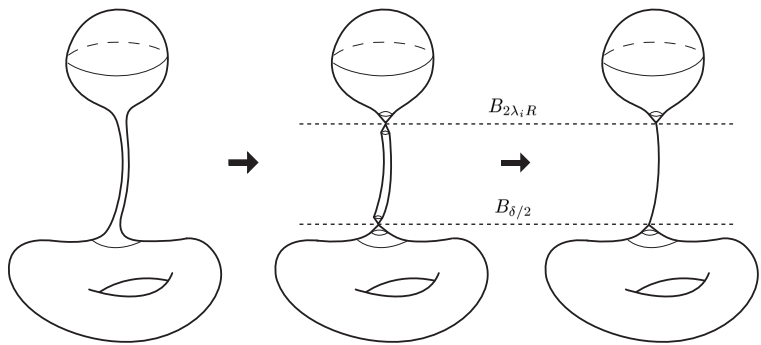}
    \end{center}
    \caption{reference map}
    \label{fig:ref}
\end{figure}

We can now define the reference map $\tilde{w}$. Let $\tilde{w}=w$ except that on $B_{\delta/2}\setminus B_{2\lambda_i R}$
\begin{equation*}
    \tilde w(\theta,r)=\gamma (\frac{\log r-\log (2\lambda_i R)}{\log (\delta/2)-\log (2\lambda_i R)}).
\end{equation*}
The above construction is illustrated by Figure \ref{fig:ref}.
We claim that $w$ and $\tilde{w}$ are homotopic. Since $w$ and $\tilde{w}$ are the same except on $B_{2\delta}\setminus B_{2\lambda_i R}$, we construct the homotopy explicitly. For $(\theta,r)\in B_{\delta/2}\setminus B_{2\lambda_i R}$,
\begin{equation*}
    G( (\theta,r),s)=
    \left\{
    \begin{array}[]{ll}
        F(P_N,\frac{r-2\lambda_i R}{\delta/2-2\lambda_i R}), & 2\lambda_i R\leq r\leq r(s) \\
        F( (\theta,(\delta/2-2\lambda_i R)\frac{r-r(s)}{\delta/2-r(s)}+2\lambda_i R),s), & r(s)\leq r\leq \delta/2,
    \end{array}
    \right.
\end{equation*}
where $r(s)=2\lambda_i R+s(\delta/2-2\lambda_i R)$.
From here, it is obvious that one can further deform $F(P_N,t)$ to $\gamma$.

If there are more than one neck region, we repeat the above
construction for each neck.

\subsection{Proof of the energy identity}

Let us assume that the energy identity (\ref{eqn:identity}) is not true. Then there
exists $\xi>0$ and a subsequence (still denoted by $u_i$) such
that
\begin{equation*}
    \lim_{i\to \infty} \sum_{k=1}^{L} \int_{A_{i,k}} e(u_i) \geq \xi.
\end{equation*}

Let $\tilde{w}$ be the reference map constructed above. We will
show a contradiction by construct a new sequence $\tilde{u}_i$ in
the same homotopy class such that
\begin{equation*}
    \lim_{i\to \infty} E(\tilde{u}_i)<\lim_{i\to \infty} E(u_i).
\end{equation*}

Define $\tilde{u}_i$ as follows

(1) $\tilde{u}_i=u_i$ outside the neck region ($B_\delta\setminus B_{\lambda_i R}$);

(2) $\tilde{u}_i (\theta,r)=\exp_p [\frac{r-\delta/2}{\delta/2}\exp_p^{-1} u_i(\theta,\delta)]$ for $(\theta,r)\in B_{\delta}\setminus B_{\delta/2}$;

(3) $\tilde{u}_i (\theta,r)=\exp_q [\frac{2\lambda_i R-r}{\lambda_i R}\exp_p^{-1} u_i(\theta,\lambda_i R)]$ for $(\theta,r)\in B_{2\lambda_i R}\setminus B_{\lambda_i R}$;

(4) $\tilde{u}_i (\theta,r)=\gamma (\frac{\log r-\log (2\lambda_i R)}{\log(\delta/2)-\log(2\lambda_i R)})$.

It remains to show first, the energy of $\tilde{u_i}$ in the neck
region is smaller than $\xi/2$ for $i$ large and second,
$\tilde{u}_i$ is homotopic to $\tilde{w}$. For the first point,
(A1) and Lemma \ref{lem:cone} implies that the energy in
$B_\delta\setminus B_{\delta/2}$ and $B_{2\lambda_i R}\setminus
B_{\lambda_i R}$ for all necks is no more than $\xi/4$. Moreover,
as $i$ gets larger, then the energy on the geodesic part can be
made as small as we want since the length of $\gamma$ is fixed
(See Section 5.3).

For the second point, if there is only one neck, then $M$ is
decomposed into three parts; $M\setminus B_{\delta/2}$,
$B_{\delta/2}\setminus B_{2\lambda_i R}$ and $B_{2\lambda_i R}$.
For the reference map $\tilde{w}$, the decomposition is fixed
because we chose and fixed a large $i$ in the construction. For
$\tilde{u_i}$, the center of $B_\delta$ and $B_{\lambda_i R}$
changes with $i$ and so does the radius of $B_{2\lambda_i R}$.
However, when restricted to the separating circles $\partial
B_{\delta/2}$ and $\partial B_{2\lambda_i R}$, $\tilde{w}$ and
$\tilde{u}_i$ are constant maps to the same points. This allows us
to prove the homotopy between $\tilde{w}$ and $\tilde{u}_i$ on
each of the three parts separately.

On $M\setminus B_{\delta/2}$, (A1) and (A3)
 show that $d(\tilde{w},\tilde{u}_i)<\sigma$, which implies that there is a homotopy between them leaving
 $\partial B_{\delta/2}$ fixed. The bubble part $B_{2\lambda_i R}$ of $\tilde{w}$ and $\tilde{u}_i$ are homotopic for the same reason.
 On $B_{\delta/2}\setminus B_{2\lambda_i R}$ part, they are different parametrization of the same curve.
 This concludes our proof for (\ref{eqn:identity}) in the case of only one neck.

 For the general case of multiple necks,
 the decomposition is more complicated but the same argument applies. To illustrate the idea,
 consider a bubble tree as shown in Figure \ref{fig:tree}. The necks are labelled from $1$ to $4$.
 The corresponding decomposition of $M$ is illustrated in Figure \ref{fig:shadow}.
The shadow parts are neck regions. There are eight circles
separating the neck regions and the bubble regions.
 Both $\tilde{w}$ and $\tilde{u}_i$ map these circles to $p_i$'s and $q_i$'s. The homotopy for bubble regions
 (including $M\setminus (B_{\delta/2}(x_1)\cup B_{\delta/2}(x_2))$ follows from Lemma \ref{lem:convexradius} and
  the homotopy for neck regions is obvious because they are different parametrization of the same geodesics.

   \section{Appendix}
  In this appendix, we prove the local existence and uniqueness of the $\alpha-$flow equation
  \begin{equation}\label{eqn:old}
    \partial_t u=\triangle_M u+(\alpha-1)\frac{\nabla \abs{\nabla u}^2 \cdot \nabla u}{1+\abs{\nabla u}^2} +A(u)(\nabla u,\nabla u)
\end{equation}
for any smooth initial value $u_0:M\to N\subset \Real^k$.

As in the case of harmonic maps, there is an intrinsic way of
writing the right hand side of (\ref{eqn:old}). If we regard
$(1+\abs{\nabla u}^2)^{\alpha-1} \nabla u$ as a section of the
pull back bundle $u^*TN$ and denote the induced connection of
$u^*TN$ by $\tilde{D}$, the righthand side can be written as
\begin{equation*}
    \tau_\alpha(u):=\frac{1}{(1+\abs{\nabla u}^2)^{\alpha-1}} \sum_i \tilde{D}_{e_i} \left( (1+\abs{\nabla u}^2)^{\alpha-1} \nabla_{e_i} u
    \right),
\end{equation*}
where $\{e_i\}$ is an orthonormal basis of $TM$ in some open set.
In the following, we will call $\tau_\alpha$ the $\alpha-$tension
field of $u$. It is obvious that smooth solution to
(\ref{eqn:old}) is equivalent to that of the following
\begin{equation*}
    \pfrac{u}{t}=\tau_\alpha(u),
\end{equation*}
where we regard $u(\cdot,t)$ as maps into $N$. This allows us to
consider a totally geodesic isometric embedding of $N$ into
$\Real^k$ with some non-flat metric as in \cite{Hamilton}.
Precisely, there is a metric $h$ on $\Real^k$ and an embedding $N$
into $\Real^k$ such that the isometric embedding is totally
geodesic and there exists an isometric involution $i$ which acts
on a tubular neighborhood $\mathcal T$ of $N$, leaving points of $N$ fixed.
By composing with this embedding, we regard $u$ as a map from $M$
into $(\Real^k,h)$ and denote by $\tau_\alpha^N$ and
$\tau_\alpha^{\Real^k}$ the $\alpha-$tension fields of $u$ as a
map into $N$ and into $\Real^k$ respectively. Since $N$ is a
totally geodesic submanifold, we have
$\tau_\alpha^N(u)=\tau_\alpha^{\Real^k}(u)$ (see page 108 in
\cite{Hamilton}). Therefore, it suffices to study
\begin{equation}
    \pfrac{u^\beta}{t}=\triangle_M u^\beta +(\alpha-1)\frac{\nabla \abs{\nabla u}^2\cdot \nabla u^\beta}{1+\abs{\nabla u}^2} +\Gamma(u)\# \nabla u\#\nabla u,
    \label{eqn:new}
\end{equation}
where $u^\beta$ is the components of $u$, $\triangle_M$ is the
usual Laplacian on $M$ and $\Gamma$ is the Christoffel symbol of
the Levi-Civita connection of $(\Real^k,h)$. Since $\abs{\nabla
u}^2=h_{\beta\gamma}(u)  \nabla {u^\beta}\cdot \nabla {u^\gamma}$,
when we expand $\nabla \abs{\nabla u}^2$, there is an extra term
involving the gradient of $h(u)$ which can be absorbed into the
last term. The equation can be rewritten as
\begin{equation}
    \pfrac{u^\beta}{t}=\triangle_M u^\beta +2(\alpha-1)\frac{\nabla^2_{ij}u^\gamma\nabla_i u^\gamma \nabla_j u^\beta }{1+\abs{\nabla u}^2}
    +\tilde{\Gamma}(u)\# \nabla u\# \nabla u.
    \label{eqn:latest}
\end{equation}

Next, we start a routine iteration procedure to show the local
existence of (\ref{eqn:latest}). Consider the following linear
parabolic system
\begin{equation}
    \pfrac{u}{t}=\triangle u + (\alpha-1) \frac{(\nabla^2 u, \nabla v)\nabla v}{1+\abs{\nabla v}^2}+ \tilde{\Gamma}(v)\#\nabla v\# \nabla v.
    \label{eqn:linear}
\end{equation}
Let $w=u-u_0$, then $w$ satisfies
\begin{eqnarray}\label{eqn:w}
    \pfrac{w}{t}&=& \triangle w + (\alpha-1) \frac{(\nabla^2 w, \nabla v)\nabla v}{1+\abs{\nabla v}^2}+ \tilde{\Gamma}(v)\#\nabla v\#\nabla v \\ \nonumber
    && +\triangle u_0 +(\alpha-1) \frac{(\nabla^2 u_0,\nabla v)\nabla v}{1+\abs{\nabla v}^2}.
\end{eqnarray}
For some $\mu\in (1,2)$, set
\begin{equation*}
    V=\left\{v\in C^{\mu,\mu/2}(M\times [0,T])|\quad \norm{v}_{C^{\mu,\mu/2}(M\times [0,T])}\leq 1, v(0)\equiv 0\right\}.
\end{equation*}
For each $v\in V$, the Schauder estimate of linear parabolic
systems (see \cite{Ed} and \cite{Schlag}) implies
\begin{equation}\label{eqn:schauder}
    \norm{w}_{C^{\mu+1,\frac{\mu+1}{2}}(M\times [0,T])}\leq C (\norm{w}_{C^0(M\times [0,T])}+  \norm{u_0}_{C^{\mu+1}(M)}+1).
\end{equation}
Since $w(\cdot,0)\equiv 0$, we have
\begin{equation*}
    \norm{w}_{C^0(M\times [0,T])}\leq T C (\norm{w}_{C^0(M\times [0,T])}+  \norm{u_0}_{C^{\mu+1}(M)}+1).
\end{equation*}
Choose $T$ small so that
\begin{equation*}
    \norm{w}_{C^0(M\times [0,T])}\leq TC(\norm{u_0}_{C^{\mu+1}(M)}+1).
\end{equation*}
By the interpolation of the H\"older space, for $\sigma>0$ such that
$\mu=(1-\sigma)(\mu+1)$, we have
\begin{equation*}
    \norm{w}_{C^{\mu,\mu/2}(M\times [0,T])}\leq \norm{w}_{C^{0}(M\times [0,T])}^\sigma \norm{w}_{C^{\mu+1,\frac{\mu+1}{2}}(M\times [0,T])}^{1-\sigma}.
\end{equation*}
This implies that we can choose $T$ sufficiently small so that
$w\in V$.

Now start from any $v_0\in V$. Let $v_{k+1}$ be the solution of
(\ref{eqn:linear}) with $v=v_{k}$ and zero initial value. We then
have a uniform bound
\begin{equation*}
    \norm{v_k}_{C^{\mu+1,\frac{\mu+1}{2}}(M\times [0,T])}\leq C.
\end{equation*}
Using Schauder estimate again, we obtain uniform
$C^{\mu+3,\frac{\mu+3}{2}}$ estimate for $v_k$. By taking a
subsequence, we know $v_k$ converges to some $w$ in
$C^{\mu+1,\frac{\mu+1}{2}}(M\times [0,T])$. Then $w+u_0$ is a
solution to the $\alpha-$flow with initial value $u_0$.

If there are two smooth solutions $u$ and $w$ to the equation
(\ref{eqn:latest}) defined on $[0,T]$, we can subtract the two
equations, multiply both sides by $u-w$ and integrate over $M$ to
get
\begin{eqnarray*}
    \frac{1}{2}\frac{d}{dt} \int_M \abs{w-u}^2 dv &\leq& -\int_M \abs{\nabla (u-w)}^2 dv \\
    && +2(\alpha-1) \int_M (\nabla^2_{ij}(u^\beta-w^\beta))\frac{ u^\beta_i u^\gamma_j}{1+\abs{\nabla u}^2} (u^\gamma-w^\gamma) dv \\
    && + C\int_M \abs{u-w}^2 dv +C\int_M \abs{\nabla u-\nabla w} \abs{u-w} dv.
\end{eqnarray*}
For the second term in the right hand side, integrating by parts
and noticing that $\frac{u^\beta_i u^\gamma_j}{1+\abs{\nabla u}^2}
\nabla_i (u^\beta-w^\beta) \nabla_j (u^\gamma-w^\gamma)\geq 0$
yields
\begin{equation*}
    \frac{d}{dt}\int_M \abs{u-w}^2 dv\leq C\int_M \abs{u-w}^2 dv,
\end{equation*}
which implies $u\equiv w$ on $M\times [0,T]$ if $u=w$ for $t=0$.
It follows immediately from the uniqueness and the involution
isometry $i:\mathcal T\to \mathcal T$ that if $u_0$ is a map from $M$ to $N$, the
image of the local solution $u(x,t)$ obtained above lies in $N$.

\begin{acknowledgement}{The research  of the first author was supported by the
Australian Research Council grant DP0985624. The second author was
supported  by a postdoctoral fellowship at the University of
Queensland through the grant DP0985624.}
\end{acknowledgement}

\end{document}